\newtheorem{theorem}{Theorem}[section]
\newtheorem{proposition}[theorem]{Proposition}
\newtheorem{lemma}[theorem]{Lemma}
\newtheorem{corollary}[theorem]{Corollary}
\theoremstyle{definition}
\newtheorem{example}[theorem]{Example}  
\newtheorem{definition}[theorem]{Definition}   
\theoremstyle{remark}
\newtheorem{remark}[theorem]{Remark}  
\renewcommand{\P}{\mathcal P}
\newcommand{\Pc}{\overline{\mathcal P}}
\newcommand{\R}{\mathcal R}
\newcommand{\Rc}{\overline{\mathcal R}}
\newcommand{\Ac}{\overline{\mathcal A}}
\newcommand{\G}{\mathcal G}
\newcommand{\V}{\mathcal V}
\newcommand{\D}{\mathcal D}
\newcommand{\A}{\mathcal A}
\newcommand{\B}{\overline{\mathcal A}}
\newcommand{\Q}{\mathcal Q}
\newcommand{\la}{\lambda}
\newcommand{\Z}{\mathbb{Z}}
\DeclareMathOperator\des{des}
\DeclareMathOperator\hdes{\widehat{des}}
\DeclareMathOperator\Des{Des}
\DeclareMathOperator\inv{inv}
\DeclareMathOperator\maj{maj}
\DeclareMathOperator\hmaj{\widehat{maj}}
\DeclareMathOperator\hd{HD}
\newcommand{\qbin}[2]{\begin{bmatrix}{#1}\\ {#2}\end{bmatrix}_q}
\newcommand{\bij}{\theta}
\DeclareMathOperator\dr{dr}
\def\down{-- ++(1,-1) circle(1.2pt)}
\def\up{-- ++(1,1) circle(1.2pt)}
\def\east{-- ++(1,0)}
\def\north{-- ++(0,1)}
\newcommand{\card}[1]{\left|#1\right|}
\newcommand{\Wmn}{{\mathcal W}_{m,n}}
\newcommand{\W}{{\mathcal W}}
\DeclareMathOperator\conj{conj}
\title{Partitions with constrained ranks and lattice paths}
\author{Sylvie Corteel\thanks{IRIF, CNRS et Universit\'e Paris Cit\'e, Paris, France. \texttt{corteel@irif.fr}}
\and
Sergi Elizalde\thanks{Department of Mathematics, Dartmouth College, Hanover, NH 03755. \texttt{sergi.elizalde@dartmouth.edu}}
\and
Carla Savage\thanks{Department of Computer Science, North Carolina State University, Raleigh, NC 27695-8206. \texttt{savage@ncsu.edu}}
}
\begin{document}

\maketitle

\begin{abstract}
In this paper we study partitions whose successive ranks belong to a given set. We enumerate such partitions while keeping track  of the number of parts, the largest part, the side of the Durfee square, and the height of the Durfee rectangle. We also obtain a new bijective proof of a result of Andrews and Bressoud that the number of partitions of~$N$ with all ranks at least $1-\ell$ equals the number of partitions of $N$ with no parts equal to $\ell+1$, for $\ell\ge0$, 
    which allows us to refine it by the above statistics.
Combining Foata's second fundamental transformation for words with Greene and Kleitman's mapping for subsets, interpreted in terms of lattice paths, we obtain enumeration formulas for partitions whose successive ranks satisfy certain constraints, such as being bounded by a constant. 
    \end{abstract}

\noindent {\bf Keywords:} partition, successive rank, lattice path, Durfee square, Foata's bijection.

\noindent {\bf Mathematics subject classification:} 05A17, 05A15, 05A19.

\section{Introduction}\label{sec:intro}

In this paper we enumerate partitions whose successive ranks satisfy certain conditions. To do so, we consider three combinatorial objects that are related in natural ways: partitions whose Young diagrams fit in an $m \times n$ box, words consisting of $m$ 1s and $n$ 2s, and lattice paths with $m$ up steps and $n$ down steps.
We use Foata's second fundamental transformation~\cite{Foata}, often used to prove the equidistribution of the inversion number and the major index on words, to translate between partitions with rank conditions and lattice paths with constraints on valley heights, which can be enumerated using existing tools.

On the partition side, we adapt a ``minimum rank-raising'' function to a mapping that can be used for bijective proofs of refined counting results for partitions. One the lattice path side, we show that the Greene--Kleitman mapping \cite{GK} in the Boolean lattice translates to a ``lowest valley-lifting'' procedure for lattice paths that can be used for bijective proofs of refined counting results for lattice paths.
Then we show that these two mappings, one on partitions and the other on lattice paths, are essentially the same, with Foata's fundamental transformation provigin the translation between them.

\subsection{Background and Notation}
A partition $\la=(\la_1,\la_2,\dots,\la_k)$ is a weakly decreasing sequence 
of positive integers. The Young diagram of $\la$ is obtained by placing unit squares in $k$ left-justified rows, such that row $i$ from the top consists of $\la_i$ squares for all $i$. We denote its area by $|\la|=\sum_i \la_i$. If $|\la|=N$, we say that $\la$ is a partition of $N$, and write $\la\vdash N$. The elements $\la_i$ are called the parts of $\la$, and $k$ is called the number of parts. Let $\P$ denote the set of all partitions. 

For a given set $S\subseteq\Z$, denote by $\P^S$ the set of all partitions whose parts belong to $S$, and let $\Pc^S=\P\setminus\P^S$. When the elements of $S$ are characterized by a certain inequality, such as $\neq b$ or $\ge b$, we often write that inequality instead of $S$.

The conjugate of $\la$, denoted by $\la'$, is the partition whose parts are $\la'_j=|\{i:\la_i\ge j\}|$, for $1\le j\le \lambda_1$. Its Young diagram is obtained by reflecting the Young diagram of $\la$ along the diagonal.
Let $d(\la)=\max\{i:\la_i\ge i\}$ denote the side of the Durfee square of $\la$, which is the largest square that fits inside the Young diagram of $\la$.
Similarly, let $\dr(\la)=\max\{i:\la_i\ge i+1\}$ denote the height of the Durfee rectangle of $\la$, which is the largest rectangle of height $i$ and width $i+1$, for some $i$, that fits inside the Young diagram of $\la$. 
It is convenient to define the  height of the Durfee rectangle of the empty partition as~0.

For $1\le i\le d(\la)$, the $i$th successive rank of $\la$ is defined to be $r_i(\la)=\la_i-\la'_i$. For $S$ as above, denote by $\R^S$ the set of all partitions whose ranks belong to (or satisfy the condition) $S$, and let $\Rc^S=\P\setminus\R^S$.

Throughout the paper, $m$ and $n$ denote nonnegative integers. If $\Q$ is any set of partitions, denote by $\Q_{m,n}$ the set of partitions in $\Q$ with at most $m$ parts and whose largest part is at most $n$; equivalently, partitions in $\Q$ whose Young diagram fits inside an $m\times n$ rectangle. We abbreviate $\Q_{n,n}$ by $\Q_n$. 
For $N\ge0$, denote by $\Q(N)$ the set of partitions of $N$ in $\Q$. These notations can be combined. For example, $\Rc^{\le b}_{m,n}(N)$ is the set of partitions of $N$ whose Young diagram fits inside an $m\times n$ rectangle, and not all of whose successive ranks are at most $b$.
Note that
$\Q=\bigcup_{N\ge0}\Q(N)$ and  $\Q=\lim_{n\to\infty}\Q_n$.

We use the notation $[n]_q=\frac{1-q^n}{1-q}$, $(q)_n=\prod_{i=1}^n(1-q^i)$, and $$\qbin{n}{k}=\frac{(q)_n}{(q)_k (q)_{n-k}}$$ if $0\le k\le n$, and $\qbin{n}{k}=0$ otherwise.

\subsection{Main Enumerative Results}

In this paper we enumerate partitions whose ranks are bounded by a constant, while keeping track of parameters such as the side of the Durfee square or the height of the Durfee rectangle, and bounding the number of parts and the largest part of the partition. 

When all ranks are bounded below by a nonnegative constant,  we will show how to use Foata's second fundamental transformation to obtain the following formula,
where we define
\begin{equation}
   \label{eq:qtCat}
C_n(q,t)=1 + \frac{1}{[n]_q}\sum_{i=1}^{n-1}t^iq^{i(i+1)}\qbin{n}{i}\qbin{n}{i+1}
\end{equation}
for $n \geq 0$.

\begin{theorem}
\label{thm:lopsided}
For $m,n \geq 0$ and  $-n\le \ell\le 1$,  
\[
\sum_{\la\in \R^{\ge 1-\ell}_{m,n}} t^{d(\la)}q^{|\la|}
=\begin{cases} 
C_{n+\ell}(q,tq^{-\ell})
& \text{if } \ell \leq m-n,\\
\displaystyle\sum_{i\ge0} t^i q^{i(i-\ell)}\left(\qbin{n+\ell}{i}\qbin{m}{i}-\qbin{n+\ell-1}{i-1}\qbin{m+1}{i+1}\right)
& \text{otherwise.}
\end{cases}
\]
\end{theorem}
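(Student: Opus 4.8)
The plan is to translate the left-hand side, via Foata's second fundamental transformation, into the enumeration of lattice paths distinguished by a valley-height constraint, and then to evaluate that enumeration in two regimes. The starting point for the first case is the observation that if $\la\ne\emptyset$ has all successive ranks $\ge1-\ell$ and largest part $\le n$, then its number of parts is $\la'_1=\la_1-r_1(\la)\le n-(1-\ell)=n+\ell-1$; hence when $\ell\le m-n$, i.e.\ $m\ge n+\ell>n+\ell-1$, the bound $m$ on the number of parts is vacuous and $\R^{\ge1-\ell}_{m,n}=\R^{\ge1-\ell}_{\infty,n}$, so the first case reduces to a one-sided problem (largest part $\le n$, all ranks $\ge1-\ell$, no restriction on the number of parts). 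In the second case, $\ell>m-n$, the part bound is genuinely active.

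To pass to lattice paths, encode $\la\in\P_{m,n}$ by the word of $m$ $1$s and $n$ $2$s read along the boundary of its Young diagram inside the $m\times n$ box, and apply Foata's second fundamental transformation, which exchanges $\maj$ and $\inv$; in the image word, viewed as a lattice path, the rank conditions become conditions on valley heights while $|\la|$ is still read off by a Mahonian statistic (this is the ``minimum rank-raising'' $\leftrightarrow$ ``lowest valley-lifting'' correspondence). Under the resulting dictionary, ``all ranks $\ge1-\ell$'' becomes ``all valleys at height $\ge\ell$'' once the path is normalized to start at height $0$, $|\la|$ becomes an area-type statistic $a(P)$, and $d(\la)$ becomes a natural path statistic $v(P)$ counting valleys. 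Thus $\sum_{\la\in\R^{\ge1-\ell}_{m,n}}t^{d(\la)}q^{|\la|}=\sum_Pt^{v(P)}q^{a(P)}$, the sum over lattice paths $P$ from the origin with $n$ up-steps and $m$ down-steps (unboundedly many down-steps in the first case) all of whose valleys lie at height $\ge\ell$.

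It remains to evaluate this path sum. In the first case the valley floor is the only active wall, so after a height shift by $\ell$ the relevant objects are Dyck-type paths of semilength $n+\ell$ graded by number of valleys and by area; rewriting each summand of~\eqref{eq:qtCat} via the $q$-Narayana identity $\tfrac1{[N]_q}\qbin{N}{i}\qbin{N}{i+1}=\tfrac1{[i+1]_q}\qbin{N}{i}\qbin{N-1}{i}$ identifies this generating function with $C_{n+\ell}(q,\cdot)$, and tracking how the shift by $\ell$ renormalizes $a$ produces the substitution $t\mapsto tq^{-\ell}$. In the second case the endpoint height $n-m$ confines the path to a bounded strip, and I would evaluate $\sum_Pt^{v(P)}q^{a(P)}$ by the reflection principle (equivalently, a two-path Lindström--Gessel--Viennot argument organized by $i=d(\la)$): for fixed $i$, absorbing the $i\times i$ Durfee square and the rank shift turns the ``free'' count into a main term $q^{i(i-\ell)}\qbin{n+\ell}{i}\qbin{m}{i}$ (a deformation of the naive $q^{i^2}\qbin{n}{i}\qbin{m}{i}$ in which the width to the right of the square is changed from $n-i$ to $n+\ell-i$), and reflecting the rank-violating configurations across the lower wall matches them, area-preservingly, with a correction $q^{i(i-\ell)}\qbin{n+\ell-1}{i-1}\qbin{m+1}{i+1}$; subtracting and summing over $i\ge0$ gives the claimed formula.

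The main obstacle is to make both translations airtight. On the path side one must pin down the precise height normalization and the definitions of $a$ and $v$ so that the effective semilength is exactly $n+\ell$ and the area renormalization is exactly $q^{-\ell}$, with no stray additive constant and on the correct variable; this is where the hypotheses $-n\le\ell\le1$ and the threshold $m-n$ genuinely enter. On the enumeration side the delicate point is the reflection in the strip: one must verify that a single correction term suffices---that a path reflected off the lower wall is never forced back across the upper wall for parameters in this range---and that the reflection preserves both the $q$- and the $t$-weight, which is exactly what fixes the index shifts $i\mapsto i-1$ and $i\mapsto i+1$ in the two Gaussian binomials while leaving the exponent of $q$ at $i(i-\ell)$. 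I would check the result against the degenerate cases: $\ell=-n$, where $n+\ell=0$, $\R^{\ge n+1}_{m,n}=\{\emptyset\}$, and the first branch returns $C_0(q,\cdot)=1$; $m=0$, where the second branch must collapse to $1$ since only the $i=0$ term survives; and $\ell=0$, $m=n$, where the first branch returns $C_n(q,t)$, which must be the generating function of partitions in the $n\times n$ box with all successive ranks $\ge1$.
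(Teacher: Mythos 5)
Your overall route coincides with the paper's: pass to lattice paths via Foata's transformation so that the rank condition becomes a valley-height condition, observe that when $\ell\le m-n$ the path is forced to end with $D^{m-n-\ell}$ and the problem reduces to Dyck paths of semilength $n+\ell$ counted by $(\des,\maj)$, giving $C_{n+\ell}(q,tq^{-\ell})$, and in the other regime evaluate the constrained path sum by an explicit formula. (Your remark that the bound $m$ on the number of parts is vacuous when $\ell\le m-n$ is a nice consistency check the paper does not state.) However, two points are not airtight. First, the statistic recording $|\la|$ is the \emph{major index} of the path --- the sum of the positions of its valleys (Lemma~\ref{lem:Foata}(a)) --- not its area. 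This matters: $q$-counting Dyck paths by area yields Carlitz's $q$-Catalan numbers, whereas the identification with $C_{n+\ell}(q,tq^{-\ell})$ goes through equation~\eqref{eq:qtCat4D}, the MacMahon/F\"urlinger--Hofbauer $(\des,\maj)$ refinement. Moreover, the renormalization $t\mapsto tq^{-\ell}$ arises precisely because prepending or deleting $|\ell|$ up steps shifts every valley \emph{position} by the same amount and hence changes $\maj$ by that amount times $\des$; an area bookkeeping would not produce this factor. Relatedly, the correct normalization is obtained by conjugating first, after which the condition is ``all valleys at height $\ge-\ell$'' for a path with $n$ up and $m$ down steps; your ``height $\ge\ell$'' has the wrong sign and needs to be pinned down before the semilength $n+\ell$ and the exponent $-\ell$ come out right.

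The more serious gap is in the second case. You propose to evaluate $\sum_P t^{\des(P)}q^{\maj(P)}$ over paths weakly above a line by ``the reflection principle,'' asserting that reflecting the violating paths across the lower wall is weight-preserving and yields the correction term $q^{i(i-\ell)}\qbin{n+\ell-1}{i-1}\qbin{m+1}{i+1}$. Naive geometric reflection does \emph{not} preserve the major index (nor, in general, the number of valleys); this is exactly why F\"urlinger--Hofbauer and Krattenthaler--Mohanty resort to inclusion--exclusion on two-rowed arrays rather than a one-step reflection. The paper sidesteps this by quoting Proposition~\ref{prop:FH} (\cite[Eq.~(4.3)]{FH}), which is precisely the identity your reflection is meant to produce: applying it to $\A^{0}_{n+\ell,m}$ (legitimate since $n+\ell\ge m$ in this regime) and substituting $t\mapsto tq^{-\ell}$ turns $q^{i^2}$ into $q^{i(i-\ell)}$ and gives the stated formula. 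As written, your derivation of this identity is a genuine gap; either cite that result (or the equivalent LGV/two-rowed-array argument carried out in detail, as in Section~\ref{sec:nonintersecting}) or construct an explicit $\maj$- and $\des$-compatible matching of the bad paths.
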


When negative ranks are allowed, one of the most significant results involving  partitions with constrained ranks is the following generalization of the Rogers--Ramanujan identities, which was discovered and proved by Andrews~\cite{Andrews} for odd $M$ and extended to even $M$ by  Bressoud~\cite{Bressoud80}.

\begin{theorem}[\cite{Andrews,Bressoud80}]
\label{Andrews-Bressoud}
For all integers $r,M,N$ such that $0<r<M/2$,
$$|\R^{[-r+2,M-r-2]}(N)|=|\P^{\not\equiv 0,r,-r\bmod M}(N)|.$$
\end{theorem}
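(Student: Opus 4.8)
The plan is to establish Theorem~\ref{Andrews-Bressoud} by showing that both sides have the same generating function, obtained as a common Andrews--Gordon--Bressoud multisum; this is the route of Andrews for odd $M$ and of Bressoud for even $M$. Set $k=\lfloor M/2\rfloor$. On the product side, one has $\sum_{N\ge0}|\P^{\not\equiv0,r,-r\bmod M}(N)|\,q^N=\frac{(q^r;q^M)_\infty(q^{M-r};q^M)_\infty(q^M;q^M)_\infty}{(q)_\infty}$, and by the Jacobi triple product identity the numerator is a theta series $\sum_{j\in\Z}(-1)^jq^{M\binom{j}{2}+rj}$. To produce the same series as the limit of a multisum, one iterates Bailey's lemma along the Bailey chain: starting from the unit Bailey pair relative to $a=1$ when $M=2k+1$, or relative to $a=q$ when $M=2k$, and performing $k-1$ iterations, one arrives at
\[
\sum_{N\ge0}|\P^{\not\equiv0,r,-r\bmod M}(N)|\,q^N=\sum_{n_1\ge\cdots\ge n_{k-1}\ge0}\frac{q^{\,n_1^2+\cdots+n_{k-1}^2+n_r+n_{r+1}+\cdots+n_{k-1}}}{(q)_{n_1-n_2}\cdots(q)_{n_{k-2}-n_{k-1}}\,(q)_{n_{k-1}}}
\]
in the odd case (the Andrews--Gordon identity), and to Bressoud's variant --- in which the innermost factor $(q)_{n_{k-1}}$ and exponent $n_{k-1}^2$ are modified so as to encode a Durfee \emph{rectangle} rather than a Durfee square --- in the even case.

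The combinatorial half is to identify this same multisum with $\sum_{N\ge0}|\R^{[-r+2,M-r-2]}(N)|\,q^N$, and for this I would use the iterated Durfee dissection of the Young diagram of $\la$ (Andrews's ``successive Durfee squares,'' with the bottom-most square replaced by a Durfee rectangle when $M$ is even). Peel off the Durfee square of $\la$, of side $n_1$; record the partition lying to its right, then pass to the partition lying below it and peel off \emph{its} Durfee square, of side $n_2\le n_1$, and so on. In the multisum, $q^{n_j^2}$ accounts for the $j$th Durfee square, the factors $1/(q)_{n_{j-1}-n_j}$ and $1/(q)_{n_{k-1}}$ for the partitions filling the regions between and around the successive squares, and the linear shift $q^{n_r+\cdots+n_{k-1}}$ encodes that we have deleted the residue class $\pm r$ rather than another. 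The crux is the dictionary between the \emph{depth} of this dissection and the \emph{rank interval}: tracking how each successive rank $r_i(\la)$ changes when a Durfee square (odd $M$) or Durfee rectangle (even $M$) is excised, one shows that $r_i(\la)\in[-r+2,M-r-2]$ for all $i\le d(\la)$ if and only if the dissection has depth at most $k-1$ with the prescribed profile. The parity of $M$ enters precisely here: the interval $[-r+2,M-r-2]$ always contains $M-3$ integers, but an odd width is ``balanced'' by $k-1$ Durfee squares, while an even width forces the innermost object to be a Durfee rectangle of height $n_{k-1}$ and width $n_{k-1}+1$, which is the combinatorial origin of Bressoud's modification of the Andrews--Gordon multisum.

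Putting the two halves together gives $|\R^{[-r+2,M-r-2]}(N)|=|\P^{\not\equiv0,r,-r\bmod M}(N)|$ for every $N$. The main obstacle is the combinatorial half --- establishing the exact equivalence ``all successive ranks in $[-r+2,M-r-2]$'' $\iff$ ``Durfee dissection of depth $\le k-1$ with the correct profile,'' which requires careful bookkeeping of how ranks transform under excision of each Durfee square/rectangle, together with a patient treatment of degenerate cases (empty inner partitions, dissections shorter than $k-1$, and the two endpoints $-r+2$ and $M-r-2$ of the interval). By contrast the product side is a fairly mechanical assembly of standard Bailey-chain identities and the Jacobi triple product; and a purely bijective alternative, pairing rank-restricted partitions with residue-restricted ones through a Burge-type correspondence, runs into the same statistic-matching difficulty and does not obviously simplify the argument.
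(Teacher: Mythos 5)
First, a point of orientation: the paper does not prove Theorem~\ref{Andrews-Bressoud} at all --- it is quoted from Andrews and Bressoud, and only its $M\to\infty$ specialization (Corollary~\ref{cor:AB}) is given a new bijective proof later in the paper. So your proposal is really being measured against the original literature. Your overall architecture (Jacobi triple product to turn the product side into a theta series over $(q)_\infty$; Bailey chains to identify that with the Andrews--Gordon--Bressoud multisum; a combinatorial reading of the multisum via successive Durfee squares/rectangles) is a legitimate and well-trodden assembly of known results, and the analytic half is essentially fine as sketched.

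The genuine gap is in the combinatorial half, and it is not a bookkeeping issue: the pointwise equivalence you propose to prove --- that $\la$ has all successive ranks in $[-r+2,M-r-2]$ \emph{if and only if} its successive Durfee dissection has depth at most $k-1$ with the prescribed profile --- is false. Take $M=5$, $r=2$, so the rank interval is $[0,1]$ and $k=2$. The partition $(2,1)$ has the single rank $r_1=2-2=0\in[0,1]$, yet its Durfee square has side $1$ and the partition below it is $(1)$, so it has \emph{two} successive Durfee squares; conversely $(3)$ has only one successive Durfee square but rank $r_1=2\notin[0,1]$. For each $N$ the two families are equinumerous (both contain exactly one partition of $3$ here), but they are different sets, so no amount of tracking how the ranks transform under excision of a Durfee square will yield the biconditional. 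What Andrews actually establishes are two separate generating-function identities --- ``rank-restricted $=$ theta quotient,'' via the successive-rank sieve (an inclusion--exclusion on the first out-of-range rank), and ``Durfee-dissection-restricted $=$ theta quotient,'' via the multisum --- with the link between the two combinatorial families passing through the analytic middle (or, alternatively, through the genuinely nontrivial Burge correspondence, which you set aside). To repair the argument you should replace the claimed equivalence by the sieve computation on the rank-restricted side, which is precisely the content of the cited papers of Andrews (odd $M$) and Bressoud (even $M$); at that point the Bailey-chain/Durfee-dissection machinery becomes optional rather than load-bearing.
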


For $r=1$, $M=4$ and $r=2$, $M=5$, Theorem~\ref{Andrews-Bressoud} gives the Rogers--Ramanujan identities.  Letting $M\to\infty$ and setting $r=\ell+1$ in Theorem~\ref{Andrews-Bressoud}, we obtain the following.

\begin{corollary}
\label{cor:AB}
For $\ell\ge0$,
$$|\R^{\ge 1-\ell}(N)|=|\P^{\neq \ell+1}(N)|.$$
\end{corollary}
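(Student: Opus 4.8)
The plan is to derive the corollary from Theorem~\ref{Andrews-Bressoud} by a limiting argument, as the sentence preceding the statement already suggests. Fix integers $N\ge 0$ and $\ell\ge 0$, put $r=\ell+1$, and apply Theorem~\ref{Andrews-Bressoud} with an auxiliary modulus $M$ that will be taken large. The hypothesis $0<r<M/2$ holds whenever $M\ge 2\ell+3$, and substituting $r=\ell+1$ the theorem reads
\[
\card{\R^{[1-\ell,\,M-\ell-3]}(N)}=\card{\P^{\not\equiv 0,\ell+1,-\ell-1\bmod M}(N)}.
\]

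Next I would check that, once $M$ is large in terms of $N$, both sides of this identity collapse to the sides appearing in the corollary. For the left side: every partition of $N$ has all parts at most $N$, hence each successive rank satisfies $\card{r_i(\la)}=\card{\la_i-\la'_i}\le N$; so as soon as $M-\ell-3\ge N$ the upper constraint is automatic and $\R^{[1-\ell,\,M-\ell-3]}(N)=\R^{\ge 1-\ell}(N)$. For the right side: when $M>N$, a partition of $N$ has no part that is a positive multiple of $M$ and no part congruent to $-\ell-1$ modulo $M$ (its least positive representative $M-\ell-1$ exceeds $N$), so among parts of size at most $N$ the only possibly-forbidden value is $\ell+1$, whence $\P^{\not\equiv 0,\ell+1,-\ell-1\bmod M}(N)=\P^{\neq \ell+1}(N)$. (If $\ell\ge N$ both of these coincide with all of $\P(N)$, since the smallest successive rank of any partition of $N$ is $1-N\ge 1-\ell$ and no part can equal $\ell+1>N$; this corner case needs no separate handling.) Choosing, say, $M=\max\{2\ell+3,\,N+\ell+3\}$, the three displayed equalities chain together to give $\card{\R^{\ge 1-\ell}(N)}=\card{\P^{\neq \ell+1}(N)}$, and since $N$ was arbitrary the corollary follows.

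I do not foresee any genuine difficulty here: this is just the stabilization of both sides of Theorem~\ref{Andrews-Bressoud} as $M\to\infty$, and the only points requiring care are keeping $r<M/2$ in force and verifying that all three forbidden residue classes — not merely the class of $r$ — become inert in the limit.

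It is worth noting that the remainder of the paper provides an independent, bijective route to this corollary which, unlike the argument above, will yield the refinements by $d(\la)$, $\dr(\la)$, the number of parts, and the largest part advertised in the introduction. Concretely, writing a partition $\la$ with Durfee square of side $d$ as that $d\times d$ square together with a partition $\alpha$ (at most $d$ parts) to its right and a partition $\beta$ (largest part at most $d$) below it, one has $r_i(\la)=\alpha_i-\beta'_i$ for $1\le i\le d$, so $\la\in\R^{\ge 1-\ell}$ becomes $\alpha_i\ge\beta'_i+1-\ell$ for all such $i$; the core of that approach is to transport this condition, via Foata's second fundamental transformation, to the valley-height condition on lattice paths whose enumeration is handled by the tools developed here, and then to read off a partition with no part equal to $\ell+1$.
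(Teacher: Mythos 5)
Your proposal is correct and matches the paper's own derivation, which obtains the corollary precisely by setting $r=\ell+1$ in Theorem~\ref{Andrews-Bressoud} and letting $M\to\infty$; you have simply made explicit the stabilization of both sides for $M$ large relative to $N$, and the bound $M=\max\{2\ell+3,N+\ell+3\}$ does the job. The bijective route you sketch at the end is indeed the paper's separate, refinement-yielding proof, but it is not needed for the corollary as stated.
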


A bijective proof of this equality was given in~\cite{CSV}, by separating the cases $\ell=0$ and $\ell>0$. A proof via nonintersecting lattice paths was presented in~\cite{BCPS}.

In this paper, we give a new bijective proof of Corollary~\ref{cor:AB} that has 
a uniform description for every $\ell\ge0$, and yields several refinements: it can be restricted to partitions whose Young diagram fits inside a given rectangle, and it allows us to keep track of the side of the Durfee square and the height of the Durfee rectangle. This is achieved by describing a bijection
$$\bij:\Rc_{m,n}^{\ge 1-\ell}(N)\to \P_{m-\ell-1,n+\ell+1}(N-\ell-1)$$
that preserves the Durfee rectangle.  

We further show that the bijection $\bij$ boils down to two famous mappings, which we adapt to partitions and lattice paths: Foata's second fundamental transformation for words~\cite{Foata}, and the Greene--Kleitman mapping between adjacent levels of the Boolean lattice \cite{GK}. The key is to interpret these maps in terms of lattice paths and to prove that the Durfee square of the partition corresponds to the number of descents (valleys) of the path, and that the area of the partition corresponds to the major index of the path.

As a consequence of the bijection $\bij$, we obtain the following formula counting partitions by the height of the Durfee rectangle.
\begin{theorem}
\label{thm:central_drect}
For $m,n,\ell\ge0$ such that $n+\ell\ge m$,
$$
    \sum_{\la\in \R_{m,n}^{\ge 1-\ell}} t^{\dr(\la)}q^{|\la|}=
    \sum_{i\ge0}t^i q^{i(i+1)}\left(\qbin{n-1}{i}\qbin{m+1}{i+1}-q^{\ell+1}\qbin{n+\ell}{i}\qbin{m-\ell}{i+1}\right).
$$
\end{theorem}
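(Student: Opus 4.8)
The plan is to reduce the claim to a single generating‑function identity valid for \emph{all} partitions in a box, and then to invoke the bijection $\bij$ from the previous sections. Write $F_{m,n}(t,q)=\sum_{\la\in\P_{m,n}}t^{\dr(\la)}q^{|\la|}$.

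\textbf{Step 1: a Durfee‑rectangle decomposition.} I would first prove that, for all $m,n\ge0$ (reading $q$‑binomials with the usual out‑of‑range conventions),
\[
F_{m,n}(t,q)=\sum_{i\ge0}t^iq^{i(i+1)}\qbin{n-1}{i}\qbin{m+1}{i+1}.
\]
Fix $i\ge0$. A partition $\la\in\P_{m,n}$ has $\dr(\la)=i$ exactly when $\la_i\ge i+1$ and $\la_{i+1}\le i+1$, i.e.\ the $i\times(i+1)$ rectangle fits inside the Young diagram but no $(i+1)\times(i+2)$ rectangle does. Cutting along the boundary of this rectangle splits $\la$ into three pieces: the rectangle itself, of area $i(i+1)$; the sub‑partition to its right, which has at most $i$ rows (since $\la_{i+1}\le i+1<i+2$) and parts at most $n-i-1$, hence lies in $\P_{i,\,n-i-1}$; and the sub‑partition below it, which has at most $m-i$ rows and parts at most $i+1$, hence lies in $\P_{m-i,\,i+1}$. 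This is a bijection (the reassembly of $\la$ from the two sub‑partitions automatically has Durfee rectangle of height exactly $i$, by the maximality encoded in $\la_i\ge i+1$ and $\la_{i+1}\le i+1$), and since $\P_{a,b}$ has generating function $\qbin{a+b}{a}$, summing over $i$ gives the stated identity.

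\textbf{Step 2: complement and transport.} Since $\P_{m,n}$ is the disjoint union of $\R^{\ge 1-\ell}_{m,n}$ and $\Rc^{\ge 1-\ell}_{m,n}$,
\[
\sum_{\la\in\R^{\ge 1-\ell}_{m,n}}t^{\dr(\la)}q^{|\la|}=F_{m,n}(t,q)-\sum_{\la\in\Rc^{\ge 1-\ell}_{m,n}}t^{\dr(\la)}q^{|\la|}.
\]
Under the hypothesis $n+\ell\ge m$ the bijection $\bij\colon\Rc^{\ge 1-\ell}_{m,n}(N)\to\P_{m-\ell-1,\,n+\ell+1}(N-\ell-1)$ is available, and it preserves the Durfee rectangle, so $\dr(\bij(\la))=\dr(\la)$ and $|\bij(\la)|=|\la|-\ell-1$. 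Hence
\[
\sum_{\la\in\Rc^{\ge 1-\ell}_{m,n}}t^{\dr(\la)}q^{|\la|}=q^{\ell+1}\,F_{m-\ell-1,\,n+\ell+1}(t,q)=q^{\ell+1}\sum_{i\ge0}t^iq^{i(i+1)}\qbin{n+\ell}{i}\qbin{m-\ell}{i+1},
\]
using Step 1 with $(m,n)$ replaced by $(m-\ell-1,n+\ell+1)$. Substituting this and Step 1 into the previous display yields the theorem.

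\textbf{Sanity checks and the main obstacle.} Two degenerate situations are worth recording to confirm consistency: if $\ell\ge m$ then every partition in $\P_{m,n}$ has all ranks $\ge1-m\ge1-\ell$, so $\Rc^{\ge 1-\ell}_{m,n}=\emptyset$, matching the fact that $\qbin{m-\ell}{i+1}=0$ for all $i\ge0$ in that range; and at $\ell=0$ one recovers the count of partitions in a box all of whose ranks are positive. The bijection $\bij$ does all of the real work, so the only genuinely new content is Step 1, a routine variant of the classical Durfee‑square decomposition. The single point requiring care is pinning down the two complementary boxes exactly --- $\P_{i,n-i-1}$ on the right and $\P_{m-i,i+1}$ below --- and verifying that the inverse assembly produces a partition whose Durfee rectangle has height precisely $i$; I expect no real difficulty, but this bookkeeping, together with the $q$‑binomial edge cases, is where the write‑up must be careful.
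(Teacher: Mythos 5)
Your proposal is correct and follows essentially the same route as the paper's proof: the Durfee-rectangle decomposition of $\P_{m,n}$ giving $\sum_i t^iq^{i(i+1)}\qbin{n-1}{i}\qbin{m+1}{i+1}$, followed by complementation and transport of $\Rc^{\ge 1-\ell}_{m,n}$ via the Durfee-rectangle-preserving bijection $\bij$ to $\P_{m-\ell-1,n+\ell+1}$. The only difference is cosmetic (you describe the piece below the rectangle as lying in $\P_{m-i,i+1}$ rather than as the conjugate of a partition in $\P_{i+1,m-i}$), which changes nothing.
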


Even though the side of the Durfee square is not preserved by $\bij$, it is still possible to use our bijection to obtain a similar formula for this statistic. 

\begin{theorem}
\label{thm:central-simplified}
For $m,n,\ell\ge0$ such that $n+\ell\ge m$, 
\[
\sum_{\la\in \R_{m,n}^{\ge 1-\ell}} t^{d(\la)}q^{|\la|}=
\sum_{i\ge0} t^i q^{i^2}\left(\qbin{n}{i}\qbin{m}{i}-q^\ell\qbin{n+\ell-1}{i-1}\qbin{m-\ell+1}{i+1}\right).
\]
\end{theorem}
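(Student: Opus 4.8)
I would prove this by combining the classical Durfee-square decomposition with a reflection argument, reducing everything to a $q$-binomial identity. Fix $i\ge0$ and let $\la\in\P_{m,n}$ with $d(\la)=i$. Deleting the $i\times i$ Durfee square from the Young diagram of $\la$ leaves a partition $\alpha$ to the right of the square (with at most $i$ parts, each of size at most $n-i$) and a partition $\delta$ below it (with at most $m-i$ parts, each of size at most $i$); put $\gamma=\delta'$, so $\gamma$ has at most $i$ parts, each of size at most $m-i$. This gives a bijection between $\{\la\in\P_{m,n}:d(\la)=i\}$ and such pairs $(\alpha,\gamma)$, with $|\la|=i^2+|\alpha|+|\gamma|$. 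Moreover, for $1\le j\le i$ one checks $\la_j=i+\alpha_j$ and $\la'_j=i+\gamma_j$, hence $r_j(\la)=\alpha_j-\gamma_j$. Thus $\la\in\R_{m,n}^{\ge1-\ell}$ with $d(\la)=i$ precisely when $\gamma_j-\alpha_j\le\ell-1$ for all $1\le j\le i$, and writing $\mu\subseteq a\times b$ to mean ``$\mu$ has at most $a$ parts, each at most $b$'',
\[
\sum_{\la\in\R_{m,n}^{\ge1-\ell}}t^{d(\la)}q^{|\la|}=\sum_{i\ge0}t^iq^{i^2}H_i,\qquad H_i=\sum_{\substack{\alpha\subseteq i\times(n-i),\ \gamma\subseteq i\times(m-i)\\ \gamma_j-\alpha_j\le\ell-1\ \forall j}}q^{|\alpha|+|\gamma|}.
\]

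Dropping the rank constraint gives $\sum_{\alpha,\gamma}q^{|\alpha|+|\gamma|}=\qbin{n}{i}\qbin{m}{i}$, and $\sum_{i\ge0}t^iq^{i^2}\qbin{n}{i}\qbin{m}{i}$ is exactly the Durfee-square generating function of $\P_{m,n}$, which is the first term in the claimed formula. So it remains to show that the ``bad'' generating function, over pairs $(\alpha,\gamma)$ as above but with $\gamma_j-\alpha_j\ge\ell$ for at least one $j$, equals $q^\ell\qbin{n+\ell-1}{i-1}\qbin{m-\ell+1}{i+1}$; then $\sum_{i\ge0}t^iq^{i^2}$ times this is the second term. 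One verifies that both sides vanish unless $1\le i\le m-\ell$, and in that range the hypothesis $n+\ell\ge m$ forces $i\le n$, so all the $q$-binomials appearing are the honest ones.

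To evaluate the bad generating function, encode $\alpha$ and $\gamma$ by the lattice paths $P_\alpha,P_\gamma$ bounding their Young diagrams, each with $i$ north steps; the condition ``$\gamma_j-\alpha_j\ge\ell$ for some $j$'' says that some north step of $P_\gamma$ lies at least $\ell$ columns to the right of the corresponding north step of $P_\alpha$. Translating $P_\gamma$ by $(-\ell,-1)$ so the two paths run through overlapping ranges of heights, this becomes the statement that $P_\alpha$ and the translated path share a lattice point; swapping the portions of the two paths beyond their first common point transfers one north step, producing a pair of lattice paths with $i-1$ and $i+1$ north steps and widths $n+\ell-i$ and $m-\ell-i$, with the translation contributing the factor $q^\ell$. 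Converting back to partitions yields $q^\ell\qbin{n+\ell-1}{i-1}\qbin{m-\ell+1}{i+1}$, and summing over $i$ completes the proof. The main obstacle is making this reflection precise: one must choose the translation vector and the notion of ``first crossing'' so that tail-swapping is a genuine involution and the swapped paths stay inside their grids — and it is exactly this last point that requires $n+\ell\ge m$. As an alternative, the identity can be deduced from Theorem~\ref{thm:central_drect}: since $d(\la)=\dr(\la)$ unless $\la_{\dr(\la)+1}=\dr(\la)+1$, in which case $d(\la)=\dr(\la)+1$, the two generating functions differ by $(1-t)$ times the generating function $\sum t^{\dr(\la)}q^{|\la|}$ over those $\la\in\R_{m,n}^{\ge1-\ell}$ with $\la_{\dr(\la)+1}=\dr(\la)+1$, and a Durfee-square decomposition of this subfamily reduces matters to a reflection identity of the same kind with shifted parameters.
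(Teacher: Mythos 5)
Your plan is correct in outline, but it does not follow the paper's primary proof; it instead reconstructs the paper's \emph{alternative} argument in Section~\ref{sec:nonintersecting}. The paper's own proof of Theorem~\ref{thm:central-simplified} proceeds by complementation, then applies the Durfee-square-preserving bijection $f^\ell:\Rc_{m,n}^{\ge 1-\ell}(N)\to\Rc_{m-\ell,n+\ell}^{\ge1}(N-\ell)$ from Theorem~\ref{thm:bij}(b), conjugates, passes to lattice paths via Foata's bijection (equations~\eqref{eq:phib} and~\eqref{eq:RV}), and invokes the F\"urlinger--Hofbauer formula~\eqref{Ac0}; the payoff of that route is that it reuses machinery already built (and explains structurally \emph{why} the subtracted term has shifted parameters $n+\ell-1$, $m-\ell+1$). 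Your route — Durfee decomposition into $(\alpha,\gamma)$ with $r_j(\la)=\alpha_j-\gamma_j$, the identity $\sum_i t^iq^{i^2}\qbin{n}{i}\qbin{m}{i}$ for unconstrained pairs (Lemma~\ref{lem:qbin2}), and a Lindstr\"om--Gessel--Viennot tail-swap to count the ``bad'' pairs — is exactly the nonintersecting-paths proof the paper sketches later, and it is more elementary in that it needs neither $f$ nor Foata. Your target for the bad term, $q^\ell\qbin{n+\ell-1}{i-1}\qbin{m-\ell+1}{i+1}$, is the correct one (it agrees with the theorem and with Proposition~\ref{prop:FH} at $\ell=0$; note the paper's Section~\ref{sec:nonintersecting} as printed writes the transposed product $\qbin{m+\ell-1}{i-1}\qbin{n-\ell+1}{i+1}$, which appears to be a typo). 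What remains unexecuted in your write-up — and you rightly flag it — is the verification that the tail-swap is a weight-preserving bijection onto \emph{all} crossed pairs, i.e., that with the translated endpoints every path pair $A_1\to B_2$, $A_2\to B_1$ necessarily intersects, and that the area bookkeeping produces exactly the factor $q^\ell$ when a north step migrates between the two paths. The hypothesis $n+\ell\ge m$ is precisely what makes the first of these true (for $n+\ell<m$ the formula genuinely fails, e.g.\ $m=2$, $n=0$, $\ell=1$), so this step cannot be waved away; but it is standard LGV geometry, and the paper's own Section~\ref{sec:nonintersecting} treats it at the same level of detail. Your closing remark about deducing the result from Theorem~\ref{thm:central_drect} via $d(\la)\in\{\dr(\la),\dr(\la)+1\}$ is a correct observation but would require evaluating a third generating function, so it is not a shortcut.
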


An alternative proof of Theorem~\ref{thm:central-simplified} can be obtained from Krattenthaler and Mohanty's work on lattice paths between two boundaries~\cite{KM}. And yet a third proof can be obtained using the techniques from~\cite{BCPS} involving nonintersecting lattice paths, as we will show in Section~\ref{sec:nonintersecting}.

\subsection{Structure of the Paper}

The paper is structured as follows. In Section~\ref{sec:background} we provide some background on words and lattice paths. 
In Section~\ref{sec:Foata}, 
we adapt Foata's classical bijection for words~\cite{Foata} to the context of partitions and lattice paths. 
This bijection allows us to study partitions whose ranks are bounded from below by a constant $1-\ell$ by interpreting them as lattice paths satisfying certain conditions.

For $\ell\le1$, we then apply tools from lattice path enumeration to obtain our formulas. 
This case, where all the ranks of the partition are non-positive (or equivalently non-negative, by conjugation), is treated in Section~\ref{sec:ranks_infinite_interval-lopsided}.
In particular, it will follow that partitions whose ranks are positive are enumerated by refined Catalan numbers.

For $\ell \ge 0$, we describe a bijection between partitions of $N$ with some rank $\le -\ell$ and partitions of $N$ that have some part equal to $\ell+1$. This case is treated in Section~\ref{sec:ranks_infinite_interval-central}. 
Note that the cases $\ell=0$ and $\ell=1$ lie in the intersection where both methods apply.
In Section~\ref{sec:theta} we relate our bijection to a classical construction due to Greene and Kleitman~\cite{GK}, an injection from the set of $k$-element subsets to the set of $(k+1)$-element subsets of $[n]$ (where $k<n/2$), originally used  to give a symmetric chain decomposition of the Boolean lattice.
In Section~\ref{sec:lattice_paths_other}, we 
use nonintersecting paths and tools from~\cite{KM} to
give alternative proofs of some results from Section~\ref{sec:ranks_infinite_interval-central}.
In Section~\ref{sec:final} we briefly discuss the enumeration of partitions whose ranks
lie in an arbitrary set, which is wide open in general.

\section{Background}
\label{sec:background}

\subsection{Words and Partitions}\label{sec:words}

Let $\Wmn$ denote the set of words consisting of $m$ $1$s and $n$ $2$s. For $w=w_1w_2\dots w_{m+n}\in\Wmn$, define its descent set $\Des(w)=\{i\in[m+n-1]:w_i>w_{i+1}\}$, its descent number $\des(w)=\card{\Des(w)}$, its major index $\maj(w)=\sum_{i\in\Des(w)} i$, and its inversion number $\inv(w)=|\{(i,j):i<j,\ w_i>w_j\}|$.

Following~\cite{SaganSavage}, we identify a partition $\la\in\P_{m,n}$ with the word $w\in\Wmn$ obtained by tracing the path from the lower-left corner to the upper-right corner of the $m\times n$ box in which the Young diagram is placed, following the south-east boundary of the diagram, and recording a $1$ for each vertical step and a $2$ for each horizontal step.
The word associated to $\la=(\la_1,\dots,\la_k)$ is
$$w=1^{m-k}2^{\la_k}12^{\la_{k-1}-\la_k}12^{\la_{k-2}-\la_{k-1}}\dots12^{\la_{1}-\la_{2}}12^{n-\la_1},$$
and it satisfies that $\inv(w)=|\la|$. The side of the Durfee square, $d(\la)$,  is the largest $i$ such that, in $w$, the $i$th 2 from the left is to the left of the $i$th 1 from the right. Similarly,
the Durfee rectangle height,
$\dr(\la)$,  is the largest $i$ such that, in $w$, the $(i+1)$st 2 from the left is to the left of the $i$th 1 from the right.

\begin{example}\label{ex:lambda-w}
If $m=4$, $n=6$ and $\lambda=(4,3,3)$, we get
$$
w=1^12^31^12^01^12^11^12^2=1222112122,
$$
$d(\la)=3$ and $\dr(\la)=2$.
\end{example}

Letting $d=d(\la)$, we will also find it useful to describe the word $w$ in terms of $\la$ and $\la'$ as
\begin{equation}
\label{eq:boundary_word}
    w=1^{m-\la_1'}21^{\la_1'-\la'_2}21^{\la_2'-\la'_3}2\ldots1^{\la_{d-1}'-\la'_d}2
    1^{\la_1'-d}2^{\la_1-d}
    1 2^{\la_{d-1}-\la_d}1 2^{\la_{d-2}-\la_{d-1}} \ldots 1 2^{\la_1-\la_2}1 2^{n-\la_1}.
\end{equation}
Continuing Example~\ref{ex:lambda-w}, we have $\lambda'=(3,3,3,1)$ and the boundary word $w$ can be viewed as
$$
w=1^1 2 1^0 2 1^0 2 1^0 2^0 1 2^0 1 2^1 1 2^2 = 1222112122.
$$

\subsection{Lattice paths} \label{sec:lattice_paths}

Denote by $\G_{m,n}$ the set of lattice paths with $m$ up steps $U=(1,1)$ and $n$ down steps $D=(1,-1)$,
starting at the origin and ending at $(m+n,m-n)$. Let $\G_{n}=\G_{n,n}$; these paths are often called {\em grand Dyck paths}.
Words in $\Wmn$ can be viewed as paths in $\G_{m,n}$, by interpreting each $1$ as a $U$ step and each $2$ as a $D$ step. With some abuse of notation, if $P\in\G_{m,n}$ is the path corresponding to a word $w$, we define $\Des(P)$ to equal $\Des(w)$, and similarly for the other statistics. Then $\Des(P)$ equals the set of positions ($x$-coordinates) of the valleys of $P$, $\des(P)$ equals the number of valleys, and $\maj(P)$ equals the sum of the positions of the valleys, where a valley is a vertex preceded by a $D$ step and followed by a $U$ step.

For $P\in\G_{m,n}$, let $v_i(P)$ denote the height of the $i$th valley from the \emph{left} of $P$. 
For a set $S\subseteq\Z$, often abbreviated as an inequality, let 
$$\V^S_{m,n}=\{P\in\G_{m,n}:v_i(P)\in S\ \forall i\},$$
and let $\V^S_n=\V^S_{n,n}$. 

For a fixed $\ell\in\Z$, let $\A^{\ell}_{m,n}$ denote the subset of $\G_{m,n}$ consisting of paths that stay weakly above the line $y=\ell$. Then $\B^{\ell}_{m,n}=\G_{m,n}\setminus\A^{\ell}_{m,n}$ denotes paths that go below this line. Note that $\A^{\ell}_{m,n}=\emptyset$ for $\ell>0$, since the origin is already below the line $y=\ell$ in this case. In particular, $\B^{1}_{m,n}=\G_{m,n}$.

 Notice that $\A^{0}_{n,n}=\V^{\ge0}_n$ is simply the set of {\em Dyck paths} of semilength $n$, which we denote by $\D_n$. It is well known that $|\D_n|=C_n=\frac{1}{n+1}\binom{2n}{n}$, the $n$th Catalan number. When Dyck paths are enumerated with respect to their major index, i.e., the sum of the positions of their valleys, we obtain the following $q$-analogue of Catalan numbers, which goes back to MacMahon:
\begin{equation}\label{eq:qCat}
\sum_{P\in\D_n}q^{\maj(P)}=\frac{1}{[n+1]_q}\qbin{2n}{n}.    
\end{equation}
This is not to be confused with another common $q$-analogue of Catalan numbers due to Carlitz, where $q$ marks the area of Young diagrams that fit inside a staircase shape.

The expression~\eqref{eq:qCat} can be further refined by adding a variable $t$ that marks the number of descents, i.e., valleys. This yields the $q,t$-analogue $C_n(q,t)$ of the Catalan numbers given in equation~\eqref{eq:qtCat}, due to F\"urlinger and Hofbauer~\cite{FH}:
\begin{equation}
    \label{eq:qtCat4D}
C_n(q,t)=\sum_{P\in\D_n}t^{\des(P)}q^{\maj(P)}= 1+ \frac{1}{[n]_q}\sum_{i=1}^{n-1}t^iq^{i(i+1)}\qbin{n}{i}\qbin{n}{i+1}.
\end{equation}
The coefficient of $t^i$ in $C_n(q,t)$ is a $q$-analogue of the well-known Narayana numbers.

More generally, for paths with an arbitrary number of up and down steps, F\"urlinger and Hofbauer prove the following.

\begin{proposition}[{\cite[Equation (4.3)]{FH}}]
\label{prop:FH}
For $m\ge n$,
$$\sum_{P\in\A^{0}_{m,n}} t^{\des(P)} q^{\maj(P)} = \sum_{i\ge0} t^i q^{i^2} \left(\qbin{m}{i}\qbin{n}{i} -\qbin{m-1}{i-1}\qbin{n+1}{i+1}\right).$$
\end{proposition}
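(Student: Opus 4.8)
The plan is to obtain the generating function over $\A^0_{m,n}$ by first computing the one over \emph{all} of $\G_{m,n}$ and then subtracting the contribution of the paths that dip below the $x$-axis.

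\emph{Run decomposition.} Every $P\in\G_{m,n}$ with exactly $i$ valleys can be written uniquely as $P=U^{a}D^{b_1}U^{c_1}D^{b_2}\cdots D^{b_i}U^{c_i}D^{e}$ with $a,e\ge0$ and all $b_j,c_j\ge1$, and we put $B_j=b_1+\cdots+b_j$, $C_j=a+c_1+\cdots+c_j$, so that $0=B_0<B_1<\cdots<B_i\le n$ and $0\le C_0<C_1<\cdots<C_i=m$. The $j$th valley of $P$ is the vertex reached just after the block $D^{b_j}$; its $x$-coordinate is $C_{j-1}+B_j$ and its height is $C_{j-1}-B_j$. Hence $\maj(P)=\sum_{j=1}^i(C_{j-1}+B_j)$, and summing $q^{\maj(P)}$ over all admissible pairs of sequences factors as
$$\Bigl(\sum_{1\le B_1<\cdots<B_i\le n}q^{B_1+\cdots+B_i}\Bigr)\Bigl(\sum_{0\le C_0<\cdots<C_{i-1}\le m-1}q^{C_0+\cdots+C_{i-1}}\Bigr)=q^{\binom{i+1}{2}}\qbin{n}{i}\cdot q^{\binom{i}{2}}\qbin{m}{i}.$$
This yields the classical identity $\sum_{P\in\G_{m,n}}t^{\des(P)}q^{\maj(P)}=\sum_{i\ge0}t^iq^{i^2}\qbin{m}{i}\qbin{n}{i}$.

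\emph{Reduction.} Since $m\ge n$, the endpoint of $P$ lies on or above the $x$-axis, so $P\in\A^0_{m,n}$ precisely when all its valley heights are nonnegative, i.e.\ when $B_j\le C_{j-1}$ for every $j\in[i]$; the paths of $\B^0_{m,n}$ are exactly those for which this interlacing fails somewhere. It thus remains to prove
$$\sum_{P\in\B^0_{m,n}}t^{\des(P)}q^{\maj(P)}=\sum_{i\ge0}t^iq^{i^2}\qbin{m-1}{i-1}\qbin{n+1}{i+1}$$
and subtract. For fixed $i$ I would handle the left-hand side by a reflection on the index data: augment the two sequences by their pinned endpoints, locate the last index $j$ at which the interlacing $B_j\le C_{j-1}$ is violated, and exchange the two tails beyond that point (so that, e.g., the $B$-sequence becomes $(B_1,\dots,B_j,C_j,\dots)$ and the $C$-sequence picks up the remaining $B$'s). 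One checks that the exchanged sequences are again strictly increasing and that $\sum_j(C_{j-1}+B_j)$ is unchanged, so the problem reduces to describing the image of this involution as an \emph{unconstrained} family of sequence pairs and reading off its generating function. Equivalently, the constrained double sum is a count of pairs of nonintersecting lattice paths and can be evaluated by the Lindström--Gessel--Viennot lemma, or one may quote Krattenthaler--Mohanty~\cite{KM} (or cite~\cite{FH} directly).

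\emph{The main obstacle} is this reflection. The two partial-sum sequences do not range over the same set --- $B_j\in[1,n]$ while $C_j\in[0,m]$, with $B_0=0$ and $C_i=m$ pinned --- so after exchanging tails one must determine the exact range of the resulting pair, and it is precisely this asymmetry that produces the shifted binomials $\qbin{m-1}{i-1}$ and $\qbin{n+1}{i+1}$ rather than a symmetric expression. One must also verify that every pair in the target set is obtained exactly once; once the image is correctly identified, matching the two generating functions is routine $q$-binomial bookkeeping.
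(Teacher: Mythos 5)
The first half of your argument is complete and correct: the run decomposition of a path with $i$ valleys into the two strictly increasing sequences $(B_j)$ and $(C_j)$, the identification $\maj(P)=\sum_j(C_{j-1}+B_j)$, and the resulting product formula $\sum_{P\in\G_{m,n}}t^{\des(P)}q^{\maj(P)}=\sum_i t^iq^{i^2}\qbin{m}{i}\qbin{n}{i}$ all check out; this is the paper's Lemma~\ref{lem:qbin2}, which the paper obtains instead via Foata's bijection and the Durfee-square decomposition. For calibration, note that the paper does not prove Proposition~\ref{prop:FH} at all: it is quoted from F\"urlinger--Hofbauer, and the companion identity \eqref{Ac0} for $\B^0_{m,n}$ is \emph{derived from} the proposition by subtraction, not the other way around.

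The genuine gap is exactly the step you flag yourself: the identity $\sum_{P\in\B^{0}_{m,n}}t^{\des(P)}q^{\maj(P)}=\sum_i t^iq^{i^2}\qbin{m-1}{i-1}\qbin{n+1}{i+1}$ is asserted but never established. You describe a tail exchange at the last violation of $B_j\le C_{j-1}$, but you do not determine the image (which ground sets the exchanged sequences live in, and why their cardinalities become $i-1$ and $i+1$), you do not verify that your specific swap even produces strictly increasing sequences (e.g.\ that $C_j$ exceeds $B_j$ at the splice point), you do not prove the map is a bijection onto an unconstrained family, and you do not check that the weight normalization still yields the prefactor $q^{i^2}$ once the two sequences change length. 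Since this is precisely where the asymmetric binomials come from, the proposal is a plan rather than a proof, and the fallback of ``quoting [FH] directly'' merely reproduces the paper's citation. The gap is certainly fillable, and the paper itself shows how in Section~\ref{sec:nonintersecting}: there the two sequences are realized as the pair of lattice paths flanking the Durfee square, with the second path's endpoints offset by one unit so that the rank (equivalently, valley-height) condition becomes nonintersection, and the Lindstr\"om--Gessel--Viennot tail swap is carried out explicitly for the generalization in Theorem~\ref{thm:central-simplified}; specializing $\ell=0$ (and transporting back through Foata's bijection) gives exactly the identity your argument is missing. Working in that geometric setup makes the weight preservation and the identification of the image immediate, which is what your index-level description leaves open.
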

Thus we have 
\begin{equation}
\label{Ac0}
\sum_{P\in\Ac^{0}_{m,n}} t^{\des(P)} q^{\maj(P)} =
\sum_{i\ge0} t^i q^{i^2} \qbin{m-1}{i-1}\qbin{n+1}{i+1}.
\end{equation}
Note that if $m \geq n$, then 
$\V^{\ge0}_{m,n}= \A^{0}_{m,n}$,
a fact we will sometimes exploit in order to apply Proposition~\ref{prop:FH}.

\section{Foata's bijection}\label{sec:Foata}

In~\cite{Foata}, Foata introduced a bijection $\phi$ for words over the positive integers, which preserves the number of occurrences of each integer, and has the property that $\maj(w)=\inv(\phi(w))$ for every word $w$. The map $\phi$ is known as Foata's {\em second fundamental transformation}.

In the special case of words over $\{1,2\}$, it was shown in \cite{CSV} that $\phi$ has the simple recursive description
$$\phi(w2) = \phi(w)2; \ \ \ \ \phi(w11) = 1 \phi(w1); \ \ \ \ \phi(w21) = 2\phi(w) 1,$$
with initial conditions $\phi(1)=1$ and $\phi(\epsilon)=\epsilon$.

Noting that for positive $m,n$, any $w \in \Wmn$ with at least one descent can be written uniquely as $w=x21^c2^a$, where $c>0$, $a \geq 0$, we have
\begin{equation}\label{eq:phi-ca}\phi(x21^c2^a) = 1^{c-1}2\phi(x)12^a.\end{equation}
Unwinding this recursive definition of $\phi$,  write $w \in \Wmn$ in the form $w=1^{m_0}2^{n_0}1^{m_1}2^{n_1} \ldots 1^{m_d}2^{n_d}$, where $m_0\geq 0$, $n_d \geq 0$, and otherwise all $m_i,n_i$ are positive.  Then $\des(w) = d$ and
\begin{equation}
\label{foata_unwound}
  \phi(1^{m_0}2^{n_0}1^{m_1}2^{n_1} \ldots 1^{m_d}2^{n_d}) =
1^{m_d-1}2 1^{m_{d-1}-1}2 \ldots 1^{m_1-1}2 1^{m_0}2^{n_0-1}12^{n_1-1}1 \ldots 2^{n_{d-1}-1}12^{n_d}.  
\end{equation}

\subsection{Foata's bijection in terms of partitions}

As described in Section~\ref{sec:words}, words in $\Wmn$ encode the boundaries of Young diagrams of partitions in $\P_{m,n}$, as well as lattice paths in $\G_{m,n}$.
By interpreting the domain of $\phi^{-1}:\Wmn\to\Wmn$ as partitions and the codomain as lattice paths, we view this map as a bijection
$$\phi^{-1}:\P_{m,n}\to \G_{m,n}.$$

Given nonempty $\la \in \P_{m,n}$, let $z$ be the word in $\Wmn$ encoding the boundary of $\la$. For $m,n>0$, we can write $z$ uniquely as
$$z=1^b2u12^a,$$
where $a=m-\la_1'\ge0$, $b=n-\la_1\ge0$ and $u \in \P_{m-b-1,n-a-1}$.
Note that $u$ is the word describing the boundary of the partition obtained from $\la$ by removing the outermost hook of $\la$, consisting of the $(m-b)+(n-a)-1$ squares in the first column and first row of $\la$.

Let $w=\phi^{-1}(z)$. By equation~\eqref{eq:phi-ca}, we have
$$ w = \phi^{-1}(1^b2u12^a) = \phi^{-1}(u)21^{b+1}2^a.$$

Using the hooks of $\la$, we can provide a visual description of $\phi^{-1}(\la)$, from which some of its properties will be more apparent (see Figure \ref{fig:phi-1}).

\begin{definition}
\label{def:hook_decomp}
Let $\la$ be a partition with $d(\la)=d$. The hook decomposition of $\la$ is
$$\hd(\la)= \{h_1, h_2, \ldots, h_d\},$$
where, for $1 \leq j \leq d$, we let
\begin{align*}
h_j&= a_j + b_j,\\
 a_{d+1-j}&=\la_j-j+1, \\
b_{d+1-j}& = \la'_j -j.
\end{align*}
\end{definition}

\begin{lemma}\label{Foata_to_hooks} Let $\la$ be a partition with hook decomposition as in Definition \ref{def:hook_decomp}.  Then
 $P = \phi^{-1}(\la)$ is the unique path in $\G_{m,n}$ with $d(\la)$ valleys whose $j$th valley from the left, for $1 \leq j \leq d(\la)$, is at the point
\begin{equation*}
    V_j  =  (b_j+a_j,b_j-a_j).
\end{equation*}
\end{lemma}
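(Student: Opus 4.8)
The plan is to unwind the recursive description of $\phi^{-1}$ given in equation~\eqref{eq:phi-ca}, tracking how each step removes the outermost hook of $\la$ and places a corresponding valley in the path $P=\phi^{-1}(\la)$. First I would set $d=d(\la)$ and write the boundary word $z$ of $\la$ in the hook form given in equation~\eqref{eq:boundary_word}. The key observation is that the decomposition $z=1^b2u12^a$ used just before the lemma exactly peels off the \emph{outermost} hook of $\la$: we have $a=m-\la_1'$ and $b=n-\la_1$, and $u$ is the boundary word of the partition $\widetilde\la$ obtained by deleting the first row and first column of $\la$. In the notation of Definition~\ref{def:hook_decomp}, this outermost hook is $h_d$, with $a_d=\la_1-d+1$ and $b_d=\la_1'-d$; crucially, $d(\widetilde\la)=d-1$ and the hook decomposition of $\widetilde\la$ is $\{h_1,\dots,h_{d-1}\}$ with the \emph{same} $a_j,b_j$ values for $j\le d-1$ (one checks that deleting the first row and column shifts all indices by one in exactly the right way, so $a_j,b_j$ are preserved).

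Next I would run the induction. The base case $d=0$ (i.e. $\la$ has empty Durfee square, so $\la=1^{m_0}2^{n_0}$ as a word) gives a path with no valleys, matching the claim vacuously. For the inductive step, apply equation~\eqref{eq:phi-ca}: since $u$ has $d-1$ descents we may write $u=x'21^{c'}2^{a'}$ unless $d-1=0$, and in general
\[
P=\phi^{-1}(z)=\phi^{-1}(1^b2u12^a)=\phi^{-1}(u)\,2\,1^{b+1}\,2^a.
\]
So $P$ is obtained from $\widetilde P:=\phi^{-1}(u)\in\G_{m-b-1,n-a-1}$ by appending one $D$ step, then $b+1$ $U$ steps, then $a$ $D$ steps. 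By induction $\widetilde P$ has $d-1$ valleys at the points $\widetilde V_j=(b_j+a_j,\,b_j-a_j)$ for $j\le d-1$. The path $\widetilde P$ ends at height $(m-b-1)-(n-a-1)=(m-n)-(b-a)=(\la_1-\la_1')-(d-d)$... more usefully, since $\widetilde P$ starts at the origin and has $m-b-1$ up and $n-a-1$ down steps, its endpoint is at $x$-coordinate $(m-b-1)+(n-a-1)$ and $y$-coordinate $(m-b-1)-(n-a-1)$. Appending $D\,U^{b+1}\,D^a$ creates exactly one new valley (a $D$ immediately followed by a $U$), located at the point reached after the initial $D$ step from the end of $\widetilde P$; its $x$-coordinate is $(m-b-1)+(n-a-1)+1$ and its $y$-coordinate is $(m-b-1)-(n-a-1)-1$. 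I would then verify by direct substitution of $a=m-\la_1'$, $b=n-\la_1$, $a_d=\la_1-d+1$, $b_d=\la_1'-d$ that these coordinates equal $(b_d+a_d,\,b_d-a_d)$, and that the $U^{b+1}D^a$ suffix after the new valley adds no further valleys and does not disturb the earlier ones. Finally, since appending steps only to the right preserves the positions and heights of the existing valleys of $\widetilde P$, the first $d-1$ valleys of $P$ are at $V_j=(b_j+a_j,b_j-a_j)$, and uniqueness of a path with a prescribed valley sequence (valley positions and heights determine a path in $\G_{m,n}$ completely, since between consecutive valleys the path is an up-run followed by a down-run) finishes the argument.

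The main obstacle I anticipate is the bookkeeping in the inductive step: one must carefully confirm that passing from $\la$ to $\widetilde\la$ (delete first row and column) genuinely realigns Definition~\ref{def:hook_decomp} so that the $a_j,b_j$ for $j<d$ are literally unchanged — this hinges on the index reversal $a_{d+1-j}=\la_j-j+1$, $b_{d+1-j}=\la'_j-j$ and the fact that $\widetilde\la_i=\la_{i+1}-1$, $\widetilde\la'_i=\la'_{i+1}-1$, $d(\widetilde\la)=d-1$ — together with checking the edge cases where $b=0$, $a=0$, or $d=1$, where the words $1^b$ or $2^a$ or the whole factor $u$ degenerate. Everything else is a routine substitution of the formulas for $a_d,b_d,a,b$ into the coordinate expressions.
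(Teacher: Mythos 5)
Your route is genuinely different from the paper's: the paper starts from the path with prescribed valleys $V_j$, applies the closed-form expression \eqref{foata_unwound} for $\phi$ in one shot, and matches exponents against the boundary word \eqref{eq:boundary_word} to solve for the $a_j,b_j$; you instead induct on $d(\la)$ by peeling off the outermost hook via \eqref{eq:phi-ca}. The inductive skeleton is sound: deleting the first row and column does send $a_{d+1-j}=\la_j-j+1$, $b_{d+1-j}=\la'_j-j$ to the same values for the surviving indices $j\le d-1$ (your computation $\widetilde\la_i=\la_{i+1}-1$, $\widetilde\la'_i=\la'_{i+1}-1$ checks out), appending $2\,1^{b+1}2^a$ creates exactly one new rightmost valley and disturbs nothing, and a path in $\G_{m,n}$ is indeed determined by its valley set.

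However, the ``routine substitution'' you defer to at the end fails as written, for two concrete reasons. First, your values $a_d=\la_1-d+1$, $b_d=\la'_1-d$ are not what Definition~\ref{def:hook_decomp} gives: setting $j=1$ in $a_{d+1-j}=\la_j-j+1$, $b_{d+1-j}=\la'_j-j$ yields $a_d=\la_1$ and $b_d=\la'_1-1$ (check against Figure~\ref{fig:phi-1}, where $\la_1=6$, $\la'_1=7$, $a_4=6$, $b_4=6$). With your values the target point is $(\la_1+\la'_1-2d+1,\,\la'_1-\la_1-1)$, whose $x$-coordinate is off by $2(d-1)$ from the actual new valley. Second, in the decomposition $z=1^b2u12^a$ the exponent $b$ is the length of the \emph{initial} run of $1$s and $a$ the length of the final run of $2$s, so $b=m-\la'_1$ and $a=n-\la_1$; you (following the paper's text, which appears to have these two formulas transposed) use $a=m-\la'_1$, $b=n-\la_1$, and with that assignment the height $(m-b-1)-(n-a-1)-1$ of the new valley does not come out to $\la'_1-\la_1-1$. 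Once both are corrected, the new valley lands at $\bigl(\la_1+\la'_1-1,\ \la'_1-\la_1-1\bigr)=(b_d+a_d,\,b_d-a_d)$ and the induction closes, so the proof is repairable but not correct as stated.
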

\begin{proof}

The unique path $P$ in $\G_{m,n}$ whose $j$th valley is at $V_j$ for each $1 \leq j \leq d$ is described by the word
\begin{equation*}
w=1^{b_1} 2^{a_1} 1^{b_2-b_1} 2^{a_2-a_1} 1^{b_3-b_2} 2^{a_3-a_2}\ldots 1^{b_d-b_{d-1}} 2^{a_d-a_{d-1}}
1^{m-b_d} 2^{n-a_d}.
\end{equation*}
By equation \eqref{foata_unwound}, applying $\phi$ to $w$ gives the word
\begin{equation*}
\phi(w)=1^{m-b_d-1} 2  1^{b_d-b_{d-1}-1} 2 \ldots 1^{b_2-b_1-1} 2
1^{b_1} 2^{a_1-1}
1 2^{a_2-a_1-1} 1 2^{a_3-a_2-1} \ldots 1 2^{a_d-a_{d-1}-1}
1 2^{n-a_d}.
\end{equation*}
Let $\la$ be the partition whose boundary is $\phi(w)$.
By equation \eqref{eq:boundary_word}, matching exponents,
\begin{align*}
m-\la_1' &=m-b_d-1,\\
\la'_j-\la'_{j+1} & =  b_{d+1-j} - b_{d-j}-1 \  {\rm for} \  1 \leq j \leq d-1,\\
\la'_d-d & =  b_1,\\
\la_d-d & =  a_1-1,\\
\la_j-\la_{j+1} & =  a_{d+1-j} - a_{d-j}-1 \  {\rm for} \  d-1 \geq j \geq 1,\\
n-\la_1 & =  n-a_d.
\end{align*}
This proves the lemma for $j=1$ and $j=d$.  For $2 \leq j \leq d-1$,
$$ b_j-b_1 =  \sum_{i=2}^j(b_i-b_{i-1}) =  \sum_{i=2}^j \left(1+\la'_{d+1-i}-\la'_{d+1-(i-1)}\right) =  (j-1) + \la'_{d+1-j}-\la'_d.$$
Since $b_1 = \la'_d-d$, this gives $b_j = \la'_{d+1-j}-(d+1-j)$.
It can be shown similarly that $a_i=\la_{d+1-j}-(d+1-j) -1$ for $2 \leq j \leq d-1$.
\end{proof}

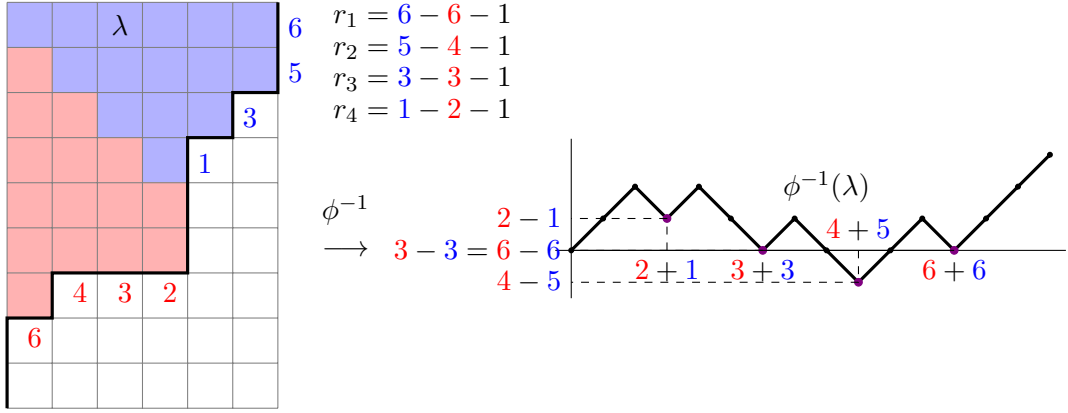
\begin{figure}[htb]
\centering
    \begin{tikzpicture}[scale=0.6]
    \fill[blue!30] (0,8) rectangle (6,9);
    \fill[blue!30] (1,7) rectangle (6,8);
    \fill[blue!30] (2,6) rectangle (5,7);
    \fill[blue!30] (3,5) rectangle (4,6);
    \fill[red!30] (0,8) rectangle (1,2);
    \fill[red!30] (1,7) rectangle (2,3);
    \fill[red!30] (2,6) rectangle (3,3);
    \fill[red!30] (3,5) rectangle (4,3);
      \draw[gray,thin] (0,0) grid (6,9);
      \draw[very thick] (0,0) coordinate(d0)
      -- ++(0,1) coordinate(d1)
      -- ++(0,1) coordinate(d2)
      -- ++(1,0) coordinate(d3)
      -- ++(0,1) coordinate(d4)
      -- ++(1,0) coordinate(d5)
      -- ++(1,0) coordinate(d6)
      -- ++(1,0)  coordinate(d7)
      -- ++(0,1) coordinate(d8)
      -- ++(0,1) coordinate(d9)
      -- ++(0,1)  coordinate(d10)
      -- ++(1,0)  coordinate(d11)
      -- ++(0,1) coordinate(d12)
      -- ++(1,0) coordinate(d13)
      -- ++(0,1) coordinate(d14)
       -- ++(0,1) coordinate(d15);
      \draw[red] (d3) node[below left] { $6$};
      \draw[red] (d5) node[below left] { $4$};
      \draw[red] (d6) node[below left] { $3$};
      \draw[red] (d7) node[below left] { $2$};
      \draw[blue] (d9) node[above right] { $1$};
      \draw[blue] (d11) node[above right] { $3$};
        \draw[blue] (d13) node[above right] { $5$};    
        \draw[blue] (d14) node[above right] { $6$};
     \draw (2.5,8.5) node {$\la$};
      \draw (7.5,3.5) node [label=$\phi^{-1}$] {$\longrightarrow$};
      \draw (7,8.7) node[right] {$r_1={\color{blue}6}-{\color{red}6}-1$};
      \draw (7,8) node[right] {$r_2={\color{blue}5}-{\color{red}4}-1$};
      \draw (7,7.3) node[right] {$r_3={\color{blue}3}-{\color{red}3}-1$};
      \draw (7,6.6) node[right] {$r_4={\color{blue}1}-{\color{red}2}-1$};
     
      \begin{scope}[shift={(12.5,3.5)},scale=.7071]
       \draw (-.5,0)--(15.5,0);
       \draw (0,-1.5)--(0,3.5);
     \draw[very thick](0,0) circle(1.2pt)\up\up\down\up\down\down\up\down\down\up\up\down\up\up\up; 
     \draw (8,2) node {$\phi^{-1}(\la)$};
       \filldraw[violet] (3,1) circle (.12);
      \filldraw[violet] (6,0) circle (.12);
      \filldraw[violet] (9,-1) circle (.12);
      \filldraw[violet] (12,0) circle (.12);
      \draw[dashed] (3,0) node[below] {${\color{red}2}+{\color{blue}1}$} --(3,1)--(0,1) node[left]{${\color{red}2}-{\color{blue}1}$};
     \draw[dashed] (6,0) node[below] {${\color{red}3}+{\color{blue}3}$} --(6,0)--(0,0) node[left]{${\color{red}3}-{\color{blue}3}={\color{red}6}-{\color{blue}6}$};
    \draw[dashed] (9,0) node[above] {${\color{red}4}+{\color{blue}5}$} --(9,-1)--(0,-1)
    node[left]{${\color{red}4}-{\color{blue}5}$};
    \draw (12,0) node[below] {${\color{red}6}+{\color{blue}6}$}--(0,0);
      \end{scope}
    \end{tikzpicture}
  \caption{The map $\phi^{-1}:\P_{9,6}\to \G_{9,6}$ applied to the partition $\la=(6,6,5,4,4,4,1)$. The sequences $a=(1,3,5,6)$ and $b=(2,3,4,6)$ are used to determine the coordinates of the valleys of $\phi^{-1}(\la)$.}\label{fig:phi-1}
\end{figure}

We summarize the statistics preserved by $\phi^{-1}$ in the following lemma. Part (a) is a well-known property of Foata's bijection, and part (b) is proved in \cite[Cor.\ 2.4]{SaganSavage}. Part (c), in terms of a related map $\psi$, is proved in~\cite{BBES14} for the case $m=n$, and in~\cite{BBES16} for the general case. 
Part (d) is proved in~\cite{ElizaldeFPSAC} for the case $m=n$. Part (e) appears to be new. We include proofs below to make the paper self-contained.

\begin{lemma}\label{lem:Foata}
Let $\la\in\P_{m,n}$ and $P=\phi^{-1}(\la)\in\G_{m,n}$. Then

\begin{enumerate}[(a)]
    \item $|\la|=\maj(P)$, 
    \item $d(\la)=\des(P)$, 
    \item $\hd(\la)=\Des(P)$, 
    \item $r_i(\la)=-1-v_{d+1-i}(P)$ for all $i$,
    \item $\dr(\la')=\begin{cases}
\des(P)-1, & \text{if $P$ starts with a $D$};\\
\des(P), & \text{otherwise}.
\end{cases}
$
\end{enumerate}
\end{lemma}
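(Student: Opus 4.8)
The plan is to prove the five parts largely in the order stated, leaning on the explicit word descriptions already assembled in the excerpt rather than re-deriving Foata's bijection from scratch. For part~(a), I would invoke the defining property of Foata's second fundamental transformation $\phi$, namely $\maj(w)=\inv(\phi(w))$ for every word $w$; applying this with $w=\phi^{-1}(z)$, where $z$ encodes the boundary of $\la$, gives $\maj(\phi^{-1}(\la))=\inv(z)$, and the paragraph in Section~\ref{sec:words} already records that $\inv(z)=|\la|$. Part~(b) is immediate from the unwound formula~\eqref{foata_unwound}: writing $w=1^{m_0}2^{n_0}\cdots 1^{m_d}2^{n_d}$ (maximal blocks), one has $\des(w)=d$ by inspection, and $\phi(w)$ as displayed has exactly $d$ occurrences of the descent pattern $2\,1$, so $\des(\phi(w))=d$ as well; since $\des$ of a path is $d(\la)$ on the partition side and the number of valleys on the path side, we get $d(\la)=\des(P)$. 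Alternatively, part~(b) drops out of part~(c), so I would present~(c) first and deduce~(b).

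For part~(c), the real content is already in Lemma~\ref{Foata_to_hooks}: that lemma shows $P=\phi^{-1}(\la)$ is the unique path whose $j$th valley from the left sits at $V_j=(b_j+a_j,\,b_j-a_j)$, where $h_j=a_j+b_j$ are the hook lengths from Definition~\ref{def:hook_decomp}. Since $\Des(P)$ is precisely the set of $x$-coordinates of the valleys of $P$, this says $\Des(P)=\{a_1+b_1,\dots,a_d+b_d\}=\{h_1,\dots,h_d\}=\hd(\la)$, which is exactly~(c). (One should note in passing that the $h_j$ are distinct and increasing, so that $\hd(\la)$ really is a $d$-element set; this follows from $a_j<a_{j+1}$ and $b_j<b_{j+1}$, which hold because the successive differences appearing in Lemma~\ref{Foata_to_hooks} are nonnegative integers plus~1, hence positive.) Part~(d) then follows by bookkeeping: the height of the $j$th valley from the left is $b_j-a_j$, so the $i$th valley from the \emph{right} has height $b_{d+1-i}-a_{d+1-i}$; using the substitutions $a_{d+1-i}=\la_i-i+1$ and $b_{d+1-i}=\la'_i-i$ from Definition~\ref{def:hook_decomp} gives $v_{d+1-i}(P)=b_{d+1-i}-a_{d+1-i}=(\la'_i-i)-(\la_i-i+1)=\la'_i-\la_i-1=-r_i(\la)-1$, which rearranges to $r_i(\la)=-1-v_{d+1-i}(P)$.

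Part~(e) is the new statement, and I expect it to be the main obstacle, though only mildly so. The cleanest route is to apply part~(d) (or part~(b)) to the conjugate partition and translate. Conjugation of $\la$ corresponds, on the boundary word, to swapping $1\leftrightarrow 2$ and reversing the word, which on the path side reverses $P$ and flips $U\leftrightarrow D$; one checks how $\des$ and the leading step transform under this symmetry. Concretely, I would argue as follows: by the definition in Section~\ref{sec:words}, $\dr(\la')$ is the largest $i$ such that the $(i+1)$st $2$ from the left of the boundary word of $\la'$ is left of the $i$th $1$ from the right — and after the conjugation symmetry this becomes a statement about $\la$'s own word, relating $\dr(\la')$ to $d(\la)=\des(P)$ with a correction term depending on whether $\la_1'=m$ (equivalently, whether the first row of the Young diagram is shorter than the box is wide, equivalently whether the boundary word of $\la$ starts with a $1$, equivalently whether $P$ starts with $U$). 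Tracking the case split: $P$ starts with $D$ exactly when $m_0=0$ in the block decomposition $1^{m_0}2^{n_0}\cdots$, i.e.\ when $\phi(w)$ — reading off~\eqref{foata_unwound} — has its block structure shifted so that one valley is "lost" relative to the generic count; this is precisely the $\des(P)-1$ case. I would verify the two cases against Example~\ref{ex:lambda-w} / Figure~\ref{fig:phi-1} as a sanity check. The one subtlety to be careful about is the boundary convention $\dr(\varnothing)=0$ and the degenerate cases $d(\la)=0$ or $P$ having no valleys, which should be handled separately at the start of the argument.
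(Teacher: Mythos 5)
Parts (a), (c) and (d) are correct and essentially follow the paper's route: (c) and (d) are read off from Lemma~\ref{Foata_to_hooks} exactly as in the paper, and for (a) you invoke $\maj=\inv\circ\phi$ directly where the paper instead sums the entries of $\hd(\la)$; both are valid. Your \emph{primary} argument for (b), however, does not work: Foata's map does not preserve $\des$, and $\des$ of the boundary word of $\la$ counts the number of distinct part sizes of $\la$, not $d(\la)$. Concretely, $w=2121$ has $d=2$ valleys but $\phi(w)=2211$ has a single descent --- the blocks $1^{m_j-1}$ and $2^{n_j-1}$ in \eqref{foata_unwound} can be empty, so the occurrences of $21$ visible in the displayed formula need not be actual descents --- and the partition $(2,2)$, whose boundary word is $2211$, has $d(\la)=2\neq\des(2211)$. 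Your fallback of deducing (b) from (c), using that the hooks $h_1<\dots<h_d$ are distinct, is exactly what the paper does, so this slip is harmless.

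Part (e) is where the real gap lies. The correct argument is: $\dr(\la')$ is the largest $i$ such that the Young diagram of $\la$ contains an $(i+1)\times i$ rectangle, so $\dr(\la')=d(\la)-1$ if $\la'_d=d$ and $\dr(\la')=d(\la)$ otherwise; and $\la'_d=d$ is equivalent to $b_1=0$ in Definition~\ref{def:hook_decomp}, i.e.\ to the leftmost valley of $P$ being at $(a_1,-a_1)$ with $a_1\ge1$, i.e.\ to $P$ starting with a $D$. Your proposed chain of equivalences --- ``$\la'_1=m$, equivalently the boundary word of $\la$ starts with a $1$, equivalently $P$ starts with $U$'' --- is wrong on each link: $\la'_1=m$ means the boundary word of $\la$ starts with a $2$ (there are $m-\la'_1$ leading $1$s); the leading letter of $\la$'s boundary word does not determine the leading step of $P=\phi^{-1}(\la)$, since the exponent $m_0$ that governs $P$'s first step appears in the \emph{middle} of $\phi(w)$ in \eqref{foata_unwound}; and the relevant column of $\la$ is the $d$th, not the first. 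Your subsequent assertion that $m_0=0$ causes ``one valley to be lost\dots precisely the $\des(P)-1$ case'' is the entire content of (e) and is left unproven; checking it against Figure~\ref{fig:phi-1} is a sanity check, not a proof. The repair is short: from the exponent-matching in the proof of Lemma~\ref{Foata_to_hooks}, $b_1=\la'_d-d$, so $P$ starts with $D$ iff $b_1=0$ iff $\la'_d=d$ iff $\la$ contains no $(d+1)\times d$ rectangle iff $\dr(\la')=d(\la)-1$, and then apply part (b).
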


\begin{proof}
The descents of $P$ are the $x$-coordinates of its valleys.  Thus by Lemma \ref{Foata_to_hooks}, $\des(P)=d(\la)$ and  $\Des(P)=\hd(\la)$, proving parts (b) and (c).  Noting that $|\la|$ equals the sum of the entries in $\hd(\la)$, which is $\maj(P)$, we deduce part (a).  Recall that $v_i$ is the height of the $i$th valley from the left, which is at the point  
$V_{i}=(b_{i}+a_{i},b_{i}-a_{i})$, and so  
$v_{d+1-i}  = b_{d+1-i}-a_{d+1-i} = \la_j'-\la_j-1 = -r_i(\la)-1$, proving part (d).

Finally, to prove part (e), first note that $\dr(\la')$ is the largest $i$ such that the Young diagram of $\la$ contains an $(i+1)\times i$ rectangle. It follows that $\dr(\la')=d(\la)-1$ if $\la_d'=d$, and $\dr(\la')=d(\la)$ otherwise.
The condition $\la_d'=d$ is equivalent to $b_1=0$, and to the fact that the leftmost valley of $P$ has coordinates $(a_1,-a_1)$. Since $a_1 \geq 1$, this is equivalent to $P$ starting with a $D$ step. Part (e) now follows using that $d(\la)=\des(P)$.
\end{proof}

The first equality in the next lemma follows immediately from Lemma~\ref{lem:Foata}(a). The second equality is the well-known interpretation of the $q$-binomial coefficients as counting partitions inside a rectangle with respect to their area.

\begin{lemma}\label{lem:qbin}
$$\sum_{P\in\G_{m,n}}q^{\maj(P)}
=\sum_{\la\in\P_{m,n}}q^{|\la|}
=\qbin{m+n}{n}.$$
\end{lemma}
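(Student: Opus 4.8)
The plan is to establish the three quantities in the chain of equalities one at a time, from right to left, since each is a familiar building block. I would first recall the standard bijection between lattice paths in $\G_{m,n}$ (equivalently words in $\Wmn$) and partitions whose Young diagram fits in an $m\times n$ box, as set up in Section~\ref{sec:words}: a path is determined by the multiset of positions of its up steps among the $m+n$ steps, and under the identification of $\la\in\P_{m,n}$ with its boundary word, the area $|\la|$ equals $\inv(w)$. From this one gets $\sum_{\la\in\P_{m,n}}q^{|\la|}=\sum_{w\in\Wmn}q^{\inv(w)}$, and the latter is the classical Gaussian binomial coefficient: the generating function for inversions of words with $m$ ones and $n$ twos is $\qbin{m+n}{n}$. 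This can be cited as the well-known $q$-multinomial fact, or proved in one line by the recurrence $\qbin{m+n}{n}=\qbin{m+n-1}{n}+q^{m}\qbin{m+n-1}{n-1}$ obtained by conditioning on whether the boundary word ends in a $1$ or a $2$ (equivalently, on whether $\la_1<n$ or $\la_1=n$), which matches the standard recurrence for the $q$-binomial.

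For the first equality, $\sum_{P\in\G_{m,n}}q^{\maj(P)}=\sum_{\la\in\P_{m,n}}q^{|\la|}$, I would simply invoke Foata's bijection $\phi\colon\Wmn\to\Wmn$ together with Lemma~\ref{lem:Foata}(a). Concretely, $\phi^{-1}$ is a bijection from $\P_{m,n}$ (words interpreted as partition boundaries) to $\G_{m,n}$ (words interpreted as lattice paths) with $|\la|=\maj(\phi^{-1}(\la))$; summing $q^{|\la|}$ over $\la\in\P_{m,n}$ therefore equals summing $q^{\maj(P)}$ over $P\in\G_{m,n}$. Alternatively, and without invoking Foata, one can note that $\maj$ and $\inv$ are equidistributed on $\Wmn$ (MacMahon's classical result, which is exactly what Foata's transformation proves), so both equal $\qbin{m+n}{n}$; but since $\phi$ and Lemma~\ref{lem:Foata}(a) are already available in the paper, the cleanest route is to cite them directly.

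I do not expect a serious obstacle here: the lemma is essentially a bookkeeping statement assembling three standard facts, and the paper has already done the real work in establishing Foata's bijection and its area/major-index property. The only point requiring a modicum of care is making sure the identifications are consistent — that the same word $w\in\Wmn$ is being read once as the boundary of a partition (with $|\la|=\inv(w)$) and once as a lattice path (with statistics $\maj$, $\des$, etc.) — but this convention is exactly the one fixed in Sections~\ref{sec:words} and~\ref{sec:lattice_paths}, so it suffices to point to it. If a self-contained argument for $\sum_{w\in\Wmn}q^{\inv(w)}=\qbin{m+n}{n}$ is wanted, the one-line recurrence argument above closes the gap.
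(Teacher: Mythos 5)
Your proposal is correct and matches the paper's proof exactly: the first equality is Lemma~\ref{lem:Foata}(a) applied to the bijection $\phi^{-1}:\P_{m,n}\to\G_{m,n}$, and the second is the classical interpretation of $\qbin{m+n}{n}$ as the area generating function for partitions in an $m\times n$ box (equivalently, $\inv$ on $\Wmn$). One tiny slip in your optional aside: conditioning on the \emph{last} letter of the boundary word (i.e., on $\la_1<n$ versus $\la_1=n$) yields the recurrence $\qbin{m+n}{n}=\qbin{m+n-1}{n-1}+q^{n}\qbin{m+n-1}{n}$, whereas the recurrence you wrote, $\qbin{m+n-1}{n}+q^{m}\qbin{m+n-1}{n-1}$, comes from conditioning on the \emph{first} letter (i.e., on whether $\la$ has exactly $m$ parts); both are valid, so nothing breaks.
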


Similarly, using Lemma~\ref{lem:Foata}(b), we can refine this formula by the number of descents. For the second equality below, we separate the Young diagram of the partition into its Durfee square $i\times i$ plus a partition in $\P_{i,n-i}$ to its right and the conjugate of a partition in $\P_{i,m-i}$ below it. 

\begin{lemma}\label{lem:qbin2}
$$\sum_{P\in\G_{m,n}}t^{\des(P)}q^{\maj(P)}
=\sum_{\la\in\P_{m,n}} t^{d(\la)} q^{|\la|} 
=\sum_{i\ge0} t^i q^{i^2}\qbin{n}{i}\qbin{m}{i}.$$
\end{lemma}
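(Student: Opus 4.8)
\textbf{Proof plan for Lemma~\ref{lem:qbin2}.}

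The plan is to establish the two equalities separately, both of which are essentially immediate given the tools already assembled in the excerpt. For the first equality, $\sum_{P\in\G_{m,n}}t^{\des(P)}q^{\maj(P)} = \sum_{\la\in\P_{m,n}} t^{d(\la)} q^{|\la|}$, I would simply invoke the bijection $\phi^{-1}:\P_{m,n}\to\G_{m,n}$ together with Lemma~\ref{lem:Foata}: part~(a) gives $|\la|=\maj(P)$ and part~(b) gives $d(\la)=\des(P)$, so the two generating functions agree term by term. This is the same argument used for Lemma~\ref{lem:qbin}, now refined by the extra variable $t$.

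For the second equality, $\sum_{\la\in\P_{m,n}} t^{d(\la)} q^{|\la|} = \sum_{i\ge0} t^i q^{i^2}\qbin{n}{i}\qbin{m}{i}$, I would group the partitions $\la\in\P_{m,n}$ according to the side $i=d(\la)$ of their Durfee square. A partition with Durfee square of side $i$ decomposes uniquely into three pieces: the $i\times i$ Durfee square itself (contributing area $i^2$, hence the factor $q^{i^2}$ and the $t^i$); a partition $\mu$ to the right of the square with at most $i$ rows and largest part at most $n-i$, i.e.\ $\mu\in\P_{i,n-i}$; and, below the square, the conjugate of a partition $\nu\in\P_{i,m-i}$ (equivalently, a partition with at most $m-i$ rows, each of length at most $i$). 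Here one must check that this decomposition is a genuine bijection onto $\{i\}\times\P_{i,n-i}\times\P_{i,m-i}$: the definition $d(\la)=\max\{j:\la_j\ge j\}$ forces exactly this block structure, with the pieces to the right of and below the square free of any further constraint beyond fitting in the stated rectangles. Summing over $\mu$ and $\nu$ using the $q$-binomial interpretation of Lemma~\ref{lem:qbin} (namely $\sum_{\mu\in\P_{i,n-i}}q^{|\mu|}=\qbin{n}{i}$ and $\sum_{\nu\in\P_{i,m-i}}q^{|\nu|}=\qbin{m}{i}$) yields the claimed closed form.

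There is no real obstacle here; the only point requiring a moment's care is the Durfee-square decomposition, specifically verifying that the piece below the square is genuinely unconstrained apart from fitting in an $(m-i)\times i$ box, and that reassembling the three pieces always produces a partition whose Durfee square has side exactly $i$ (not larger). Both facts follow directly from the definition of $d(\la)$, as already noted in the remark preceding the lemma in the excerpt, so I would state the decomposition, cite that remark, and move on.
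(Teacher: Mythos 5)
Your proposal is correct and matches the paper's argument exactly: the first equality follows from the bijection $\phi^{-1}$ together with Lemma~\ref{lem:Foata}(a)--(b), and the second from decomposing each partition into its $i\times i$ Durfee square, a partition in $\P_{i,n-i}$ to its right, and the conjugate of a partition in $\P_{i,m-i}$ below, which is precisely the decomposition the paper describes in the sentence preceding the lemma. Your extra care about the reassembly producing Durfee square of side exactly $i$ is a fine (if routine) point that the paper leaves implicit.
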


\begin{remark}  
 Our description of $\phi^{-1}$ in Lemma \ref{Foata_to_hooks} is inspired by the bijection $\psi:\P_n\to \G_n$ that was introduced in \cite[Lem.\ 3.5]{BBES14} to study descents on pattern-avoiding involutions, later generalized to a bijection $\psi:\P_{m,n}\to\G_{m,n}$ in \cite{BBES16} (specifically, $\psi$ is the inverse of the bijection $g$ is described in \cite[Lem.\ 10]{BBES16}), and recently used in \cite{ElizaldeFPSAC} to measure the symmetry of partitions. Despite their superficially different definitions, it turns out that this bijection $\psi$ and Foata's bijection restricted to partitions are related by $$\psi(\la)=\overline{\phi^{-1}(\la')},$$ where  $\overline{P}$ denotes the path obtained from $P$ by reflecting along the $x$-axis, i.e., switching $U$ and $D$ steps.
\end{remark}

\subsection{Restricting Foata's bijection}

Lemma~\ref{lem:Foata}(d) allows us to use $\phi^{-1}$ to obtain bijections between partitions with constrained ranks and paths with valleys at constrained heights. Specifically, for any set $S\subseteq\Z$, we obtain a bijection
\begin{equation}\label{eq:phiS} \phi^{-1}:\R^S_{m,n}\to \V^{-S-1}_{m,n}, \end{equation}
where $-S-1=\{-s-1:s\in S\}$. It follows that 
\begin{equation}\label{eq:RV}
\sum_{\la\in\R^S_{m,n}} t^{d(\la)}q^{|\la|}=\sum_{P\in\V^{-S-1}_{m,n}}t^{\des(P)}q^{\maj(P)}.
\end{equation}
Setting $S=(-\infty,\ell-1]$ in equation~\eqref{eq:phiS} gives a bijection 
\begin{equation}\label{eq:phib}
\phi^{-1}:\R^{\le \ell-1}_{m,n}\to\V^{\ge -\ell}_{m,n}.
\end{equation}

Since conjugation preserves the Durfee square and the area of a partition, combining conjugation with the bijection in equation~\eqref{eq:phiS}, we can make the following observation about lattice paths, which appears to be new.

\begin{proposition}\label{prop:flipvalleys}
Let $S\subseteq\Z$ and let $m,n\ge0$. There is a $(\des,\maj)$-preserving bijection $$\V^{-S-1}_{m,n}\to\V^{S-1}_{n,m}$$
given by $P\mapsto \phi^{-1}(\phi(P)')$.  
\end{proposition}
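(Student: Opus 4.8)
The plan is to realize the map $P\mapsto\phi^{-1}(\phi(P)')$ as a composition of three bijections whose effect on the pair $(\des,\maj)$ is already under control, following the chain
\[
\V^{-S-1}_{m,n}\ \xrightarrow{\ \phi\ }\ \R^S_{m,n}\ \xrightarrow{\ \la\mapsto\la'\ }\ \R^{-S}_{n,m}\ \xrightarrow{\ \phi^{-1}\ }\ \V^{S-1}_{n,m},
\]
where $-S=\{-s:s\in S\}$. Here the middle arrow is conjugation of partitions; in the first and last arrows one reads a $\{1,2\}$-word once as a lattice path and once as the boundary word of a partition, exactly as in the passage from $\phi^{-1}\colon\Wmn\to\Wmn$ to $\phi^{-1}\colon\P_{m,n}\to\G_{m,n}$. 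Note that the conjugate $\la'$ has boundary word obtained from that of $\la$ by reversing it and swapping $1\leftrightarrow 2$, and that it lies in $\P_{n,m}$.

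First I would verify that each arrow is a bijection onto the stated target. For the two outer maps this is immediate from equation~\eqref{eq:phiS}: that equation gives the bijection $\phi^{-1}\colon\R^S_{m,n}\to\V^{-S-1}_{m,n}$, whose inverse $\phi$ runs $\V^{-S-1}_{m,n}\to\R^S_{m,n}$, and applying~\eqref{eq:phiS} with $-S$ in place of $S$ and $(n,m)$ in place of $(m,n)$ gives $\phi^{-1}\colon\R^{-S}_{n,m}\to\V^{S-1}_{n,m}$ (the relevant valley-height set being $\{-(-s)-1:s\in S\}=\{s-1:s\in S\}$). For the middle map, conjugation is a bijection $\P_{m,n}\to\P_{n,m}$ preserving area, and since $d(\la')=d(\la)$ the identity $r_i(\la)=\la_i-\la'_i$ gives $r_i(\la')=-r_i(\la)$ for all $i$; hence conjugation restricts to a bijection $\R^S_{m,n}\to\R^{-S}_{n,m}$.

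Next I would track the statistics along the chain. By Lemma~\ref{lem:Foata}(a),(b), the first map sends $(\maj,\des)$ of a path to $(|\cdot|,d)$ of the corresponding partition; conjugation preserves both $|\cdot|$ and $d$; and the last map, again by Lemma~\ref{lem:Foata}(a),(b), sends $(|\cdot|,d)$ back to $(\maj,\des)$. Composing and chaining these identities, for $P\in\V^{-S-1}_{m,n}$ we get $\maj(\phi^{-1}(\phi(P)'))=|(\phi(P))'|=|\phi(P)|=\maj(P)$ and, in the same way, $\des(\phi^{-1}(\phi(P)'))=\des(P)$, which is exactly the claim.

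I do not anticipate a substantive obstacle, since nothing beyond Lemma~\ref{lem:Foata} and elementary properties of conjugation is used. The one point that needs care --- the nearest thing to a difficulty --- is keeping straight which incarnation of a $\{1,2\}$-word (lattice path versus partition boundary) is in play at each stage, so that the symbols $\phi(P)$ and $\phi(P)'$ are unambiguous; once those identifications are fixed, the remainder is a routine diagram chase with no computation.
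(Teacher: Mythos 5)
Your proof is correct and is exactly the paper's argument: the paper justifies the proposition by combining the bijection of equation~\eqref{eq:phiS} (applied once with $S$ and once with $-S$, via Lemma~\ref{lem:Foata}(a),(b)) with conjugation, which preserves area and Durfee square and negates all ranks. Your write-up just makes the diagram chase and the bookkeeping of the sets $-S-1$ and $S-1$ explicit.
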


\begin{example}\label{ex:Dyck}
Taking $S=\Z_{>0}$ and $m=n$ in Proposition~\ref{prop:flipvalleys}, we get a bijection between the set $\V^{\le -2}_n$ of grand Dyck paths with all valleys on or below the line $y=-2$ and the set $\V^{\ge0 }_{n}=\D_n$ of Dyck paths.
Note that another bijection between these two sets can be obtained by reflecting each path $P\in\V^{\le -2}_n$ along the $x$-axis (resulting in a grand Dyck path with all peaks on or above the line $y=2$), and then replacing each block that lies below the $x$-axis, which must be of the form $D^iU^i$ for some $i$, with $(UD)^i$. This produces a Dyck path, and it is easy to see that this process is reversible; however, this bijection does not preserve $\des$ or $\maj$,
unlike the one given by Proposition~\ref{prop:flipvalleys}.
\end{example}

\section{Partitions with all ranks at least $1-\ell$ for $\ell\le 1$} 
\label{sec:ranks_infinite_interval-lopsided}

\subsection{Refined enumeration of partitions inside a rectangle}

Next we apply the results of the previous section to prove Theorem~\ref{thm:lopsided}, enumerating partitions with all ranks at least $1-\ell$, where $\ell \leq 1$.
\begin{proof}[Proof of Theorem~\ref{thm:lopsided}]
Applying conjugation followed by the bijection in equation~\eqref{eq:phib}, we obtain
from equation \eqref{eq:RV}
$$\sum_{\la\in \R^{\ge 1-\ell}_{m,n}} t^{d(\la)}q^{|\la|}=\sum_{\la\in \R^{\le \ell-1}_{n,m}} t^{d(\la)}q^{|\la|}=\sum_{P\in\V^{\ge -\ell}_{n,m}}t^{\des(P)}q^{\maj(P)}.$$
Thus, it suffices to enumerate paths in $\V^{\ge -\ell}_{n,m}$ with respect to the statistics $\des$ and $\maj$. 

In the case $\ell=1$, there is a simple bijection between $\V^{\ge-1}_{n,m}$ and $\V^{\ge0}_{n+1,m}$, obtained by adding a $U$ step at the beginning of each path.
This bijection increases the positions of all the valleys by $1$.

For $\ell\le 0$, any path in $\V^{\ge -\ell}_{n,m}$ must begin with $-\ell$ $U$ steps. 
Thus, there is also a simple bijection between $\V^{\ge -\ell}_{n,m}$ and $\V^{\ge0}_{n+\ell,m}$, this time obtained by removing the prefix $U^{-\ell}$ from each path. This bijection shifts the positions of all the valleys $-\ell$ units to the left.

Combining both cases, it follows that, for all $\ell\le1$, 
\begin{equation}\label{eq:Vb-1}
\sum_{P\in\V^{\ge -\ell}_{n,m}}t^{\des(P)}q^{\maj(P)}=\sum_{P\in\V^{\ge0}_{n+\ell,m}}(tq^{-\ell})^{\des(P)}q^{\maj(P)}.
\end{equation} 

The right endpoint of any path in $\V^{\ge0}_{n+\ell,m}$ is at height $n+\ell-m$. Let us consider two cases. 

If $n+\ell-m\ge 0$, then $\V^{\ge0}_{n+\ell,m}=\A^0_{n+\ell,m}$. 
In this case, equation~\eqref{eq:Vb-1}, together with Proposition~\ref{prop:FH}, implies that
\begin{align*}
\sum_{P\in\V^{\ge -\ell}_{n,m}}t^{\des(P)}q^{\maj(P)}&=\sum_{P\in\A^{0}_{n+\ell,m}}(tq^{-\ell})^{\des(P)}q^{\maj(P)}\\
&=\sum_{i\ge0} t^i q^{i(i-\ell)} \left(\qbin{n+\ell}{i}\qbin{m}{i} -\qbin{n-1+\ell}{i-1}\qbin{m+1}{i+1}\right).
\end{align*}

If $n+\ell-m\le 0$, paths in $\V^{\ge0}_{n+\ell,m}$ must end with $D^{m-n-\ell}$, since otherwise they would have valleys below $y=0$. These $D$ steps at the end do not contribute to $\des$ or $\maj$, so we can remove them to obtain a $(\des,\maj)$-preserving bijection between 
$\V^{\ge0}_{n+\ell,m}$ and $\D_{n+\ell}$.
Thus, in this case, equations~\eqref{eq:Vb-1} and~\eqref{eq:qtCat4D} imply that
\[
\sum_{P\in\V^{\ge -\ell}_{n,m}}t^{\des(P)}q^{\maj(P)}=\sum_{P\in\D_{n+\ell}}(tq^{-\ell})^{\des(P)}q^{\maj(P)}=C_{n+\ell}(q,tq^{-\ell}).\qedhere
\]
\end{proof}

Setting $m=n$ in Theorem~\ref{thm:lopsided}, we get the following.

\begin{corollary}\label{cor:lopsided}
For $n\ge0$ and $-n\le \ell\le 1$,
$$\sum_{\la\in \R^{\ge 1-\ell}_n} t^{d(\la)}q^{|\la|}
=C_{n+\ell}(q,tq^{-\ell}).$$
\end{corollary}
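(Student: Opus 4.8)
The statement is the specialization $m=n$ of Theorem~\ref{thm:lopsided}, so the natural plan is to extract it from that theorem, or rather from its proof. When $m=n$ the hypothesis $\ell\le m-n$ reads $\ell\le0$, and in that range the first case of Theorem~\ref{thm:lopsided} already gives exactly $C_{n+\ell}(q,tq^{-\ell})$ with no further work. The only value of $\ell$ in the allowed range $-n\le\ell\le1$ not covered this way is $\ell=1$, which falls under the ``otherwise'' branch of Theorem~\ref{thm:lopsided}; so the real content of the corollary is to check that this branch also collapses to $C_{n+1}(q,tq^{-1})$ when $m=n$.

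To do this uniformly and in the spirit of the paper, I would revisit the proof of Theorem~\ref{thm:lopsided} with $m=n$. Conjugation, equation~\eqref{eq:RV}, and equation~\eqref{eq:Vb-1} together give
$$\sum_{\la\in\R^{\ge1-\ell}_n}t^{d(\la)}q^{|\la|}=\sum_{P\in\V^{\ge0}_{n+\ell,n}}(tq^{-\ell})^{\des(P)}q^{\maj(P)},$$
valid for every $\ell\le1$. For $\ell\le0$ a path in $\V^{\ge0}_{n+\ell,n}$ ends at height $\ell\le0$, hence terminates in $D^{-\ell}$, and deleting this suffix is a $(\des,\maj)$-preserving bijection onto $\D_{n+\ell}$. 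For $\ell=1$ a path in $\V^{\ge0}_{n+1,n}=\A^0_{n+1,n}$ ends at height $1$; appending a single $D$ step at the very end creates no new valley and moves no existing valley, and since every Dyck path of semilength $n+1$ ends in a $D$ step this map is a $(\des,\maj)$-preserving bijection onto $\D_{n+1}$. In both cases equation~\eqref{eq:qtCat4D} turns the right-hand side into $C_{n+\ell}(q,tq^{-\ell})$, which is the claim.

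The argument is short, and there is no genuine obstacle; the only point requiring care is the $\ell=1$ bijection, namely confirming that a terminal $D$ step never participates in a valley (immediate, since a valley needs a following $U$ step) and that removing the last step of a semilength-$(n+1)$ Dyck path always returns a path in $\V^{\ge0}_{n+1,n}$, so that the correspondence is genuinely invertible. Alternatively, one could bypass lattice paths and verify the corresponding $q$-identity directly, checking that $\sum_{i\ge0}t^iq^{i(i-1)}\bigl(\qbin{n+1}{i}\qbin{n}{i}-\qbin{n}{i-1}\qbin{n+1}{i+1}\bigr)$ equals $C_{n+1}(q,tq^{-1})$ as given by~\eqref{eq:qtCat}, but this route is more computational and less illuminating.
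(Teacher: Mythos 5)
Your proof is correct, and it follows the same route as the paper: the paper derives this corollary simply by ``setting $m=n$ in Theorem~\ref{thm:lopsided}.'' The added value of your write-up is that you correctly notice this one-liner is not literally sufficient when $\ell=1$: with $m=n$ the first branch of Theorem~\ref{thm:lopsided} requires $\ell\le 0$, so $\ell=1$ lands in the ``otherwise'' branch, and one must still see why that expression equals $C_{n+1}(q,tq^{-1})$. Your fix --- returning to the proof of the theorem and exhibiting the $(\des,\maj)$-preserving bijection $\A^{0}_{n+1,n}\to\D_{n+1}$ that appends a terminal $D$ step (which creates no valley and moves none, and is invertible because every nonempty Dyck path ends in $D$) --- is exactly the missing step, and your verification that deleting the last step of a Dyck path of semilength $n+1$ lands back in $\V^{\ge0}_{n+1,n}$ is the right thing to check. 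The $\ell\le 0$ cases are handled verbatim as in the paper's proof of Theorem~\ref{thm:lopsided}. In short: same approach as the paper, carried out with more care where care is actually needed.
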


\begin{example}\label{ex:R<0bij}
Setting $\ell=0$, which is equivalent to constraining the ranks to be positive, 
equation~\eqref{eq:phib} gives a bijection between $\R^{<0}_n$ and $V^{\ge0}_n=\D_n$. Conjugation gives a bijection between $\R^{<0}_n$ and $\R^{>0}_n$, and Corollary~\ref{cor:lopsided} simply states that
\begin{equation}\label{eq:R<0}
\sum_{\la\in \R^{>0}_n} t^{d(\la)}q^{|\la|}=\sum_{P\in\D_n}t^{\des(P)}q^{\maj(P)}
=C_n(q,t),
\end{equation}
recovering \cite[Thm.\ 3.1]{SaganSavage}. Combining equation~\eqref{eq:R<0} with Example~\ref{ex:Dyck}, we obtain the formula 
$$\sum_{P\in\V^{\le-2}_n}t^{\des(P)}q^{\maj(P)}=C_n(q,t).$$
which appears to be new.
\end{example}

\subsection{Removing the bounding box}

In Theorem~\ref{thm:lopsided} and Corollary~\ref{cor:lopsided}, partitions are restricted to be inside a box. Next we remove this requirement by taking the limit as the side lengths of the box go to infinity.

An interesting fact about the $q$-Catalan numbers from equation~\eqref{eq:qCat} is that their limit as $n$ goes to infinity gives a valid formal power series:
\begin{equation}\label{eq:limCq1}\lim_{n\to\infty} C_n(q,1)=
\lim_{n\to\infty} \frac{(1-q^{2n})(1-q^{2n-1})\dots(1-q^{n+2})}{(1-q^n)(1-q^{n-1})\dots(1-q^2)}=
\prod_{i\ge2}\frac{1}{1-q^i},\end{equation}
and similarly for the $q,t$-Catalan numbers from  equation~\eqref{eq:qtCat}:
$$\lim_{n\to\infty} C_n(q,t)
=1+\sum_{i\ge1}\frac{t^i q^{i(i+1)}}{(1-q)(1-q^2)^2(1-q^3)^2\dots(1-q^i)^2(1-q^{i+1})}.$$

Taking the limit as $n$ goes to infinity in Corollary~\ref{cor:lopsided} and setting $\ell=1-b$ we get the following.

\begin{corollary}\label{cor:lopsidedlimit}
For $b\ge0$,
$$
\sum_{\la\in \R^{\ge b}} t^{d(\la)}q^{|\la|}
=1+\sum_{i\ge1}\frac{t^i q^{i(i+b)}}{(1-q)(1-q^2)^2(1-q^3)^2\dots(1-q^i)^2(1-q^{i+1})}.
$$
\end{corollary}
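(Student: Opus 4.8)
The plan is to obtain Corollary~\ref{cor:lopsidedlimit} as a straightforward limiting case of Corollary~\ref{cor:lopsided}, using the substitution $\ell=1-b$. First I would observe that Corollary~\ref{cor:lopsided} applies whenever $-n\le\ell\le1$; since $b\ge0$ is fixed, the condition $\ell=1-b\ge -n$ holds for all sufficiently large $n$ (indeed for $n\ge b-1$), so the identity
\[
\sum_{\la\in\R^{\ge 1-\ell}_n}t^{d(\la)}q^{|\la|}=C_{n+\ell}(q,tq^{-\ell})
\]
is valid for all large $n$ with $\ell=1-b$. Replacing $\ell$ by $1-b$, the left side becomes $\sum_{\la\in\R^{\ge b}_n}t^{d(\la)}q^{|\la|}$ and the right side becomes $C_{n+1-b}(q,tq^{b-1})$.

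Next I would pass to the limit $n\to\infty$. On the left, since $\R^{\ge b}=\lim_{n\to\infty}\R^{\ge b}_n$ (as noted in the Background, $\Q=\lim_{n\to\infty}\Q_n$ for any set $\Q$ of partitions, because a partition with $d(\la)$ parts and largest part bounded fits in $\R^{\ge b}_n$ once $n$ is large enough), the coefficient of each monomial $t^iq^N$ stabilizes, so $\lim_{n\to\infty}\sum_{\la\in\R^{\ge b}_n}t^{d(\la)}q^{|\la|}=\sum_{\la\in\R^{\ge b}}t^{d(\la)}q^{|\la|}$ as a formal power series. On the right, I would invoke the limit formula for the $q,t$-Catalan numbers already recorded in the excerpt, namely
\[
\lim_{n\to\infty}C_n(q,t)=1+\sum_{i\ge1}\frac{t^iq^{i(i+1)}}{(1-q)(1-q^2)^2(1-q^3)^2\cdots(1-q^i)^2(1-q^{i+1})}.
\]
Since $C_{n+1-b}(q,tq^{b-1})$ is obtained from $C_N(q,t)$ by the index shift $N=n+1-b$ (which does not affect the limit as $n\to\infty$) and the substitution $t\mapsto tq^{b-1}$, I would substitute $t\mapsto tq^{b-1}$ into the displayed limit: the term $t^iq^{i(i+1)}$ becomes $(tq^{b-1})^iq^{i(i+1)}=t^iq^{i(i+1)+i(b-1)}=t^iq^{i(i+b)}$, and the constant term $1$ is unchanged. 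This yields exactly the claimed formula.

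The only point requiring a little care — and the closest thing to an obstacle — is justifying that the substitution $t\mapsto tq^{b-1}$ commutes with the limit $n\to\infty$, i.e.\ that it is legitimate at the level of formal power series. Here $b\ge0$, so $b-1\ge-1$; for $b\ge1$ the substitution $t\mapsto tq^{b-1}$ only multiplies by nonnegative powers of $q$ and is unproblematic, while for $b=0$ we substitute $t\mapsto tq^{-1}$, and I would note that in each term the power of $q$ accompanying $t^i$ is $i(i+1)\ge i$, so multiplying by $q^{-i}$ still leaves a nonnegative (in fact, for $i\ge1$, positive) power of $q$, keeping the expression a well-defined element of $\mathbb{Z}[[q]][t]$ (equivalently, each coefficient of $t^i$ is a power series in $q$ with no negative exponents). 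With that observation the computation in the previous paragraph goes through verbatim, and the corollary follows.
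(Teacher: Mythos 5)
Your proposal is correct and follows exactly the route the paper takes: the paper obtains this corollary precisely by setting $\ell=1-b$ in Corollary~\ref{cor:lopsided} and letting $n\to\infty$, using the recorded limit of $C_n(q,t)$ and the substitution $t\mapsto tq^{-\ell}=tq^{b-1}$. Your extra care about the formal-power-series legitimacy of the $t\mapsto tq^{-1}$ substitution when $b=0$ is a sound (if routine) addition that the paper leaves implicit.
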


There is also a direct proof of this corollary. Indeed, note that the factor  $$\frac{1}{(1-q)(1-q^2)^2(1-q^3)^2\dots(1-q^i)^2(1-q^{i+1})}$$
is the generating function for plane partitions of shape $(i,i)$. Such plane partitions can be interpreted as a pair of partitions $(\mu,\nu)$, each with at most $i$ parts, where $\mu_j\ge\nu_j$ for $1\le j\le i$. In the right-hand side of Corollary~\ref{cor:lopsidedlimit}, we can interpret  $t^iq^{i(i+b)}$ as placing an $i\times (i+b)$ rectangle in the upper-left corner of the Young diagram, to which we attach the Young diagram of $\mu$ to its right, and the Young diagram of $\nu'$ below. This construction yields a partition with all ranks at least $b$, and Durfee square of side $i$.

For $b=1$, the expression on the right-hand side in  Corollary~\ref{cor:lopsidedlimit} can be interpreted as enumerating partitions with no parts of size $1$, according to the height $i$ of the Durfee rectangle. To an $i\times (i+1)$ rectangle, we attach the Young diagram of a partition with at most $i$ parts 
(contributing $\frac{1}{(1-q)(1-q^2)(1-q^3)\dots(1-q^i)}$)
to its right, and the Young diagram of a partition with largest part at most $i+1$ and no parts equal to $1$ (contributing $\frac{1}{(1-q^2)(1-q^3)\dots(1-q^i)(1-q^{i+1})}$) below this rectangle.
Thus, considering that for $\la\in\R^{>0}$ we have $d(\la)=\dr(\la)$, we recover the fact, proved bijectively in~\cite{CSV}, that the height of the Durfee rectangle is equidistributed on partitions with positive ranks and partitions with no parts equal to $1$.

If we set $t=1$ in Corollary~\ref{cor:lopsidedlimit}, we recover Corollary~\ref{cor:AB} for $b=1$, or equivalently $\ell=0$. 
This can also be seen directly by setting $t=1$ in equation~\eqref{eq:R<0} and taking the limit as $n$ goes to infinity using equation~\eqref{eq:limCq1}.

For $b=1-\ell=0$, Corollary~\ref{cor:AB} tells us that the number of partitions of $N$ with non-negative ranks equals the number of partitions of $N$ with no parts equal to 2. Thus, with the specialization $t=1$, the right-hand side of Corollary~\ref{cor:lopsidedlimit} for $b=0$ should be the generating function for partitions with no part equal to 2, that is,
\begin{equation}\label{eq:limCn}
\lim_{n\to\infty}C_n(q,q^{-1})=1+\sum_{i\ge1}\frac{q^{i^2}}{(1-q)(1-q^2)^2(1-q^3)^2\cdots(1-q^i)^2(1-q^{i+1})}=\prod_{\substack{i\ge1\\ i\neq 2}} \frac{1}{1-q^i}.
\end{equation}

To see this directly, write the sum in equation~\eqref{eq:limCn} as
\begin{align*}
1+\sum_{i\ge1}\frac{q^{i^2}}{(1-q)(1-q^2)^2(1-q^3)^2\cdots(1-q^i)^2}
+\sum_{i\ge1}\frac{q^{i^2+i+1}}{(1-q)(1-q^2)^2(1-q^3)^2\cdots(1-q^i)^2(1-q^{i+1})}
\end{align*}
by multiplying each summand by $1-q^{i+1}+q^{i+1}$. 
Each summand in the first sum can be interpreted as counting partitions with Durfee square of side $i$ having no twos and an even number of ones. Each summand in the second sum  corresponds to partitions with Durfee rectangle of height $i$ having no twos and an odd number of ones. Indeed, the contribution $q^{i^2+i+1}$ comes from a Durfee rectangle $i\times (i+1)$ and a single part equal to 1.

Interestingly, for $b\ge2$, the set $\R^{\ge b}$ is not covered by Corollary~\ref{cor:AB}, but Corollary~\ref{cor:lopsidedlimit} still applies.

\begin{remark} For $b\ge2$, the right-hand side of Corollary~\ref{cor:lopsidedlimit} (even with the specialization $t=1$) does not seem to have an interpretation in terms of partitions with forbidden parts. For example, for $b=2$, the product formula for this right-hand side is 
$$
\frac{1}{\prod_{i\ge 1}(1-q^i)^{s_i}}
$$
where $$s=(0,0,1,1,2,1,2,1,1,0,1,0,1,1,1,2,2,2,1,1,-1,0,-1,1,0,3,3,4,3,3\ldots).$$
\end{remark}

\section{Partitions with all ranks at least $1-\ell$ for $\ell \ge 0 $} 
\label{sec:ranks_infinite_interval-central}

We know by Corollary~\ref{cor:AB} that, for $\ell\ge 0$, the number of partitions of $N$ where all ranks are at least $1-\ell$ equals the number of partitions of $N$ with no parts equal to $\ell+1$. In~\cite{CSV}, Corteel, Savage and Venkatraman gave a bijective proof of this fact.

In order to prove a refinement,  we take a different view by setting up a
 bijection between the complements of the sets, that is, between $\Rc^{\ge 1-\ell}(N)$ and $\Pc^{\neq \ell+1}(N)$.
Note that $\Rc^{\ge 1-\ell}(N)$ is the set of partitions of $N$ that have {\em some} rank $\le -\ell$, and $\Pc^{\neq \ell+1}(N)$ is the set of partitions of $N$ that have {\em some} part equal to $\ell+1$. The second set is obviously in bijection with $\P(N-\ell-1)$, by simply removing a part equal to $\ell+1$.

Our bijection has some advantages over the bijection in~\cite{CSV}:
    it can be refined by restricting the partitions to be inside a rectangle, and
it yields a refinement that keeps track of the side of the Durfee square.

\subsection{The bijections $f$ and $\bij$}\label{sec:f_bij}

As in \cite{CSV}, we will make use of a minimum-rank-increasing map, which we denote by $f$,  for partitions in $\Rc^{\ge 1}$. 

Given $\la\in\Rc^{\ge 1}$, let $\tau=\tau(\la)$ be the minimum rank of $\la$ and let $i=i(\la)$ be the largest index such that $r_i(\la) = \tau$. First, note that
since $\tau \leq 0$, $\la$ must have a part of size $i$. Indeed, it is clear that $\la_{\la_i'} \geq i$, but if
$\la_{\la_i'} > i$, then $\la'_{i+1}=\la'_i$ and so $r_{i+1}=\la_{i+1}-\la'_{i+1} \leq \la_i-\la'_i=r_i$, contradicting choice of $i$.  Thus, $\la_{\la_i'} = i$.
\begin{definition}\label{def:f}
Define a map $f:\Rc^{\ge 1}\to\P$ as follows. Given $\la\in\Rc^{\ge 1}$: 
\begin{enumerate}
    \item Let $i=i(\la)$ be the largest index that minimizes $r_i(\la)$.
    \item Remove a part of size $i$ from $\la$.
    \item Add a part of size $i-1$ to $\la'$. 
    \item Let $f(\la)$ be the resulting partition.
\end{enumerate}

\end{definition}
As an example, in Figure~\ref{fig:fbij_partitions}, $f$ is applied iteratively to a partition until all ranks are positive. 

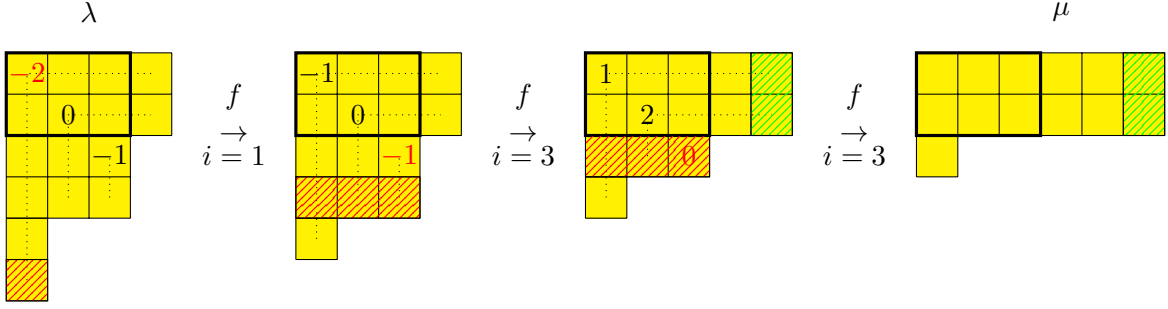
\begin{figure}[h]
\centering
\begin{tikzpicture}[scale=0.55]
    \fill[yellow] (0,0) rectangle (1,2);
    \fill[yellow] (0,2) rectangle (3,4);
    \fill[yellow] (0,4) rectangle (4,6);
    \draw (0,0) grid (1,2);
    \draw (0,2) grid (3,4);
    \draw (0,4) grid (4,6);
    \draw[dotted] (.5,.5)--(.5,5.5)--(3.5,5.5);
    \draw[dotted] (1.5,2.5)--(1.5,4.5)--(3.5,4.5);
    \draw[dotted] (2.5,2.5)--(2.5,3.5);
    \draw[very thick] (0,4) rectangle (3,6);
    \draw[red] (.5,5.5) node {$-2$};
    \draw (1.5,4.5) node {$0$};    
    \draw (2.5,3.5) node {$-1$};
    \draw[pattern=north east lines,pattern color=red] (0,0) rectangle (1,1);
    \draw (2,7) node {$\la$};
    \draw (5.5,4) node [label=$f$] {$\rightarrow$};
    \draw (5.5,4) node [below] {$i=1$};
\begin{scope}[shift={(7,1)}] 
   \fill[yellow] (0,0) rectangle (1,1);
    \fill[yellow] (0,1) rectangle (3,3);
    \fill[yellow] (0,3) rectangle (4,5);
    \draw (0,0) grid (1,1);
    \draw (0,1) grid (3,3);
    \draw (0,3) grid (4,5);
    \draw[dotted] (.5,.5)--(.5,4.5)--(3.5,4.5);
    \draw[dotted] (1.5,1.5)--(1.5,3.5)--(3.5,3.5);
    \draw[dotted] (2.5,1.5)--(2.5,2.5);    
    \draw[very thick] (0,3) rectangle (3,5);
    \draw (.5,4.5) node {$-1$};
    \draw (1.5,3.5) node {$0$};    
    \draw[red] (2.5,2.5) node {$-1$};
    \draw[pattern=north east lines,pattern color=red] (0,1) rectangle (3,2);
    \draw (5.5,3) node [label=$f$] {$\rightarrow$};
    \draw (5.5,3) node [below] {$i=3$};
\end{scope}
\begin{scope}[shift={(14,2)}] 
   \fill[yellow] (0,0) rectangle (1,1);
    \fill[yellow] (0,1) rectangle (3,2);
    \fill[yellow] (0,2) rectangle (5,4);
    \draw (0,0) grid (1,1);
    \draw (0,1) grid (3,2);
    \draw (0,2) grid (5,4);
    \draw[dotted] (.5,.5)--(.5,3.5)--(4.5,3.5);
    \draw[dotted] (1.5,1.5)--(1.5,2.5)--(4.5,2.5);
    \draw[very thick] (0,2) rectangle (3,4);
    \draw (.5,3.5) node {$1$};
    \draw (1.5,2.5) node {$2$};    
    \draw[red] (2.5,1.5) node {$0$};
    \draw[pattern=north east lines,pattern color=red] (0,1) rectangle (3,2);
    \draw[pattern=north east lines,pattern color=green] (4,2) rectangle (5,4);
    \draw (6.5,2) node [label=$f$] {$\rightarrow$};
    \draw (6.5,2) node [below] {$i=3$};
\end{scope}
\begin{scope}[shift={(22,3)}] 
   \fill[yellow] (0,0) rectangle (1,1);
    \fill[yellow] (0,1) rectangle (6,3);
    \draw (0,0) grid (1,1);
    \draw (0,1) grid (6,3);
    \draw[very thick] (0,1) rectangle (3,3);
    \draw[pattern=north east lines,pattern color=green] (5,1) rectangle (6,3);
    \draw (3.5,4) node {$\mu$};
\end{scope}
\end{tikzpicture}
  \caption{Iterating $f$ from Definition~\ref{def:f}:  The last occurrence of the smallest rank $r_i$ is colored in red, the part of size $i$ to be removed shaded in red, and the added part of size $i-1$ in the conjugate is shaded in green. The thicker rectangle is the Durfee rectangle, which is preserved by $f$. As long as the smallest rank in negative, the Durfee square is preserved by $f$ as well.}\label{fig:fbij_partitions}
\end{figure}

\begin{lemma}
\label{f_lemma}
Let $\la\in\Rc^{\ge1}$ and let $\mu = f(\la)$.  Then
\begin{enumerate}[(a)]
\item $|\mu|=|\la|-1$;
\item 
$\mu'_1=\la'_1-1$; 
\item $\mu_1=\la_1+1$ unless $i(\la)=1$, in which case $\mu_1=\la_1$;
\item $\tau(\mu) > \tau(\la)$;  if $\tau(\la) < 0$ then $\tau(\mu) = \tau(\la)+1$;
\item $i(\mu) \geq i(\la)$;
\item $\dr(\mu) = \dr(\la)$; if $\tau(\la) < 0$ then $d(\mu) = d(\la)$.
\end{enumerate}

\end{lemma}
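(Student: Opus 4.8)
The plan is to begin by replacing the two-step definition of $f$ by a single ``direct'' form that is easier to analyze: writing $i=i(\la)$, the partition $\mu=f(\la)$ is obtained from $\la$ by increasing each of $\la_1,\dots,\la_{i-1}$ by $1$ and deleting one part equal to $i$. This is legitimate because adding a part of size $i-1$ to the conjugate $\la'$ has exactly the effect of increasing the first $i-1$ parts of $\la$ by $1$, and because the deleted part of size $i$ occupies row $\la'_i$, where $\la'_i\ge d(\la)\ge i$ by the identity $\la_{\la'_i}=i$ shown just before Definition~\ref{def:f}, so it lies below the first $i-1$ rows and the two operations commute. Parts (a), (b), (c) then follow immediately: the area changes by $(i-1)-i=-1$; the number of parts drops by $1$ since a genuine part of size $i\ge1$ is removed and no part count is affected by the additions; and if $i\ge2$ the largest part becomes $\la_1+1$ (since $\la_1\ge\la_i\ge i$, deleting a part of size $i$ cannot remove this new maximum), while for $i=1$ nothing is added and the maximum is unchanged. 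The input $\la=(1)$, for which $\mu=\emptyset$, is a genuine exception to (c) and must be handled by convention.

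For (d), (e), (f) I would pass to the Durfee decomposition of $\la$: with $d=d(\la)$, write $\alpha=(\la_1-d,\dots,\la_d-d)$ for the arm right of the Durfee square and $\beta=(\la_{d+1},\la_{d+2},\dots)$ for the leg below it, so that $\la'_j-d=\beta'_j$ and $r_j(\la)=\alpha_j-\beta'_j$ for $1\le j\le d$. A short case check based on $\la'_i\ge d$ and $\la_{\la'_i}=i$ shows that exactly one of the following holds: (i) $\la'_i>d$, so the deleted part lies strictly below the Durfee square and $d(\mu)=d$ (rows $1,\dots,d$ of $\mu$ still have length at least their index, and all later rows have length at most $d$); or (ii) $\la'_i=d$, equivalently $\tau(\la)=0$, $i=d$ and $\la_d=d$. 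In particular $\tau(\la)<0$ always lands in case (i). In case (i), $f$ adds $1$ to $\alpha_1,\dots,\alpha_{i-1}$ and deletes a part equal to $i$ from the leg $\beta$; since that part is counted by $\beta'_j$ exactly when $j\le i$, comparing conjugates yields
\[
r_j(\mu)=r_j(\la)+2\quad(1\le j\le i-1),\qquad r_i(\mu)=r_i(\la)+1=\tau(\la)+1,\qquad r_j(\mu)=r_j(\la)\quad(i<j\le d).
\]
Since $r_j(\la)\ge\tau(\la)$ for all $j$ and $r_j(\la)>\tau(\la)$ for $j>i$ by maximality of $i$, these formulas give $\tau(\mu)=\tau(\la)+1$, and since $r_i(\mu)=\tau(\mu)$ we get $i(\mu)\ge i=i(\la)$; this proves (d) and (e). Finally $d(\mu)=d=d(\la)$, and $\la_{i-1}\ge d\ge i$ forces $\dr(\la)\ge i-1$, so $\dr(\mu)=\max(i-1,\dr(\la))=\dr(\la)$, proving (f).

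In the shrinking case (ii) the direct description gives $\mu=(\la_1+1,\dots,\la_{d-1}+1,\la_{d+1},\la_{d+2},\dots)$ with $d(\mu)=d-1$, and a short count shows $\mu'_j=\la'_j-1$ for $1\le j\le d-1$, so $r_j(\mu)=r_j(\la)+2\ge2$ for all $1\le j\le d(\mu)$. Thus $\mu\in\R^{\ge 1}$, so $\tau(\mu)\ge2>0=\tau(\la)$, which is (d) (no equality is claimed because $\tau(\la)=0$), and $\dr(\mu)=d-1=\dr(\la)$, which is (f) (again $\tau(\la)=0$, so nothing is asserted about $d(\mu)$).

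The main obstacle I anticipate is the bookkeeping underlying the dichotomy and case (i): verifying that in case (i) the deleted part really lies below the Durfee square and that the conjugate of the modified leg behaves exactly as claimed. A secondary issue is statement (e) in case (ii), where $i(\mu)\le d-1<d=i(\la)$, so (e) is false as literally stated; it must be read as holding whenever $\mu\notin\R^{\ge 1}$ (equivalently, one may add the hypothesis ``$\tau(\la)<0$'' to (e), in parallel with (d) and (f)), and this is harmless since $\mu\in\R^{\ge 1}$ precisely in the shrinking case. The degenerate input $\la=(1)$ must likewise be flagged.
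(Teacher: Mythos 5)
Your proof is correct and follows essentially the same route as the paper's: describe $f$ directly as incrementing $\la_1,\dots,\la_{i-1}$ and deleting one part equal to $i$ (the two operations commute because the deleted part sits in row $\la'_i\ge d\ge i$), read off (a)--(c), compute the effect on the ranks ($+2$ for $j<i$, $+1$ at $j=i$, unchanged for $j>i$), and isolate the single shrinking case $\la'_i=d$ --- equivalently $i=d=\la_d>\la_{d+1}$, which forces $\tau(\la)=0$ --- where the Durfee square drops by one but the Durfee rectangle survives. You are simply more explicit than the paper about the dichotomy and the $\dr$ bookkeeping. Your two caveats are also genuine: for $\la=(1)$ one gets $\mu=\emptyset$, so (c) requires a convention, and in the shrinking case $i(\mu)\le d(\mu)=d-1<d=i(\la)$ (e.g.\ $\la=(2,2)$ gives $\mu=(3)$ with $i(\mu)=1<2$), so (e) as literally stated fails there; the paper's proof passes over both points, and your proposed restriction of (e) to $\tau(\la)<0$ (or to the non-shrinking case) is the right repair and costs nothing in the later applications.
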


\begin{proof}
Let $i=i(\la)$. 
When applying the map $f$ to $\la$, each $\la_j$ for $1 \leq j < i$ increases by 1, and each $\la'_j$ for $1 \leq j \leq i$  decreases by 1, proving (a)--(c).
The effect of $f$ on the ranks is that each $r_j=\la_j-\la'_j$ for $1 \leq j < i$ increases by 2.
Additionally, $r_i=\la_i-\la'_i$ increases by 1 and the Durfee square and rectangle are both preserved, 
unless $i=d=\la_d > \la_{d+1}$ (which necessarily implies $\tau(\la)=0$). In this special case, removing a part of size $d$ from $\la$ decreases the size of the Durfee square, but adding a part of size $d-1$ to $\la'$ still preserves the Durfee rectangle.  This proves (d)--(f).
\end{proof}

\begin{lemma}\label{lem:f}
Let $m,n,\ell\ge0$ be such that $n+\ell\ge m$.
\begin{enumerate}[(a)]
\item For $\ell\ge1$, the map $f$ is a bijection 
$$f:\Rc_{m,n}^{\ge 1-\ell}(N)\to \Rc_{m-1,n+1}^{\ge 2-\ell}(N-1)$$
that preserves the Durfee square and the Durfee rectangle.
\item For $\ell=0$, the map $f$ is a bijection 
$$f:\Rc_{m,n}^{\ge 1}(N)\to \P_{m-1,n+1}(N-1)$$
that preserves the Durfee rectangle.
\end{enumerate}
\end{lemma}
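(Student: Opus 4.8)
The plan is to prove both statements by exhibiting an explicit inverse $g$ of $f$ and checking that $f$ and $g$ restrict to maps between the asserted domains and codomains.

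First I would verify that $f$ lands in the claimed codomain. A partition $\la$ in the domain is nonempty --- it has a negative rank in part (a), and a rank $\le0$ in part (b) --- so $i=i(\la)$ is defined and $\la$ has a part of size $i$, as noted before Definition~\ref{def:f}; hence $f(\la)$ is defined. By Lemma~\ref{f_lemma}(a)--(c), $|f(\la)|=|\la|-1$, the partition $f(\la)$ has $\la'_1-1\le m-1$ parts, and its largest part is $\le\la_1+1\le n+1$, so $f(\la)\in\P_{m-1,n+1}(N-1)$; Lemma~\ref{f_lemma}(f) gives the Durfee-rectangle invariance (and the Durfee-square invariance when $\tau(\la)<0$). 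In part (a), where $\tau(\la)\le-\ell<0$, Lemma~\ref{f_lemma}(d) gives $\tau(f(\la))=\tau(\la)+1\le1-\ell$, so $f(\la)\in\Rc^{\ge2-\ell}_{m-1,n+1}(N-1)$ with both Durfee parameters preserved; in part (b) nothing further is needed.

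Next I would define $g$ on the codomain. Given $\mu$, let the \emph{pivot} $j$ be the smallest index with $r_j(\mu)=\tau(\mu)$ when $\mu$ has a rank $\le1$, and $j=d(\mu)+1$ otherwise (that is, when every rank of $\mu$ is $\ge2$; this includes $\mu=\emptyset$), and let $g(\mu)$ be obtained from $\mu$ by removing a part of size $j-1$ from $\mu'$ and adding a part of size $j$ --- the step-by-step inverse of the two operations defining $f$ for the pivot $j$. In part (a) only the first alternative arises, since there $\tau(\mu)\le1-\ell\le0$. The removal is legitimate, i.e.\ $\mu'$ has a part of size $j-1$, equivalently $\mu_{j-1}>\mu_j$: in the first alternative this follows from minimality of $j$ and $\mu'_{j-1}\ge\mu'_j$, and in the second from the fact that $\mu_{d(\mu)}=\mu_{d(\mu)+1}$ would give $r_{d(\mu)}(\mu)\le-1$, contradicting $\mu\in\R^{\ge2}$ (and when $j=1$ the removal is vacuous). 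To see $g(\mu)$ is in the domain: it has area $N$ and one more part than $\mu$, hence at most $m$ parts; its largest part equals $\max(\mu_1-1,j)$ when $j\ge2$ and is $\mu_1$ when $j=1$ and $\mu\ne\emptyset$ (the case $\mu=\emptyset$, where $g(\mu)=(1)$, being trivial), which is $\le\mu_1\le n+1$ since $j\le\mu_1$ in both alternatives; and if $\mu_1=n+1$ then $r_1(\mu)=(n+1)-\mu'_1\ge n-m+2$, so the hypothesis $n+\ell\ge m$ forces $r_1(\mu)\ge2-\ell>1-\ell\ge\tau(\mu)$ in part (a) (resp.\ $r_1(\mu)\ge2$ in part (b)), ruling out $j=1$, after which a short argument gives $j\le n$, so the largest part is exactly $n$. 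This is the only use of the hypothesis $n+\ell\ge m$. Finally $\tau(g(\mu))=\tau(\mu)-1$ in the first alternative and $\tau(g(\mu))=0$ in the second, so $g(\mu)$ has a rank $\le-\ell$ in part (a) and $\le0$ in part (b), as required.

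It remains to check $f\circ g=\mathrm{id}$ and $g\circ f=\mathrm{id}$. Since $f$ and the operation defining $g$ are mutually inverse once the pivot is fixed, the content is matching pivots. For $g\circ f$: when $\tau(\la)<0$ (always in part (a)), $f$ raises $r_k$ by $2$ for $k<i(\la)$, raises $r_{i(\la)}$ by $1$, and leaves later ranks unchanged, so $\tau(f(\la))=\tau(\la)+1$ is first attained at $i(\la)$, which is thus the recovered pivot; when $\tau(\la)=0$ and $\la$ is not of the ``Durfee-square-shrinking'' type $i(\la)=d(\la)=\la_{d(\la)}>\la_{d(\la)+1}$, the same computation yields pivot $i(\la)$ with $\tau(f(\la))=1$; and when $\la$ is of that type, one checks $f(\la)\in\R^{\ge2}$ with $d(f(\la))+1=d(\la)=i(\la)$. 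For $f\circ g$ one runs this in reverse on $g(\mu)$, verifying $i(g(\mu))=j$ and that $f$ then acts in the Durfee-preserving (resp.\ Durfee-square-shrinking) mode according to which alternative defined $g$; combining cases, in part (b) the map $g$ sends the three regimes $\tau(\mu)\le0$, $\tau(\mu)=1$, $\tau(\mu)\ge2$ of the codomain bijectively onto partitions with $\tau<0$, with $\tau=0$ of non-shrinking type, and with $\tau=0$ of shrinking type, respectively. The main obstacle is precisely this bookkeeping in part (b): isolating the two branches of $g$, checking that the three regimes partition the codomain and match the three behaviours of $f$, and tracking the movement of the Durfee square in the accompanying rank computations. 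Part (a) is much cleaner, since only the Durfee-preserving branch is ever used.
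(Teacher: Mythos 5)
Your proof is correct and takes essentially the same route as the paper: both arguments send $f$ into the stated codomain via Lemma~\ref{f_lemma} and then construct the identical inverse $g$ (same pivot rule, distinguishing $\tau\le 1$ from $\tau\ge 2$, and the same use of the hypothesis $n+\ell\ge m$ to rule out the largest part of $g(\mu)$ exceeding $n$). If anything your write-up is slightly more explicit than the paper's, e.g.\ in verifying $g\circ f=\mathrm{id}$ and in separating the three regimes of part (b).
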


\begin{proof}
By Lemma \ref{f_lemma}, in both claims (a) and (b), clearly $f$ sends a partition in the domain to one in the specified range, preserving the Durfee square and/or rectangle as claimed.  To show that $f$ is onto, we will define an inverse $g$.

First note that for a partition $\la$ with $\tau(\la) > 0$, $\la'$ must contain a part of size $d(\la)$. 
If $\tau(\la) \leq 1$ and $j$ is the smallest index such that $r_j=\tau(\la)$ and $j>1$, then $\la'$ contains a part of size $j-1$.

Define $g:\mathcal{P}\rightarrow \mathcal{P}$  for $\la \in \mathcal{P}$ as follows:
\begin{enumerate}
    \item Let $\tau$ be the smallest rank of $\la$.
    \item If $\tau>1$, let $j=d(\la)+1$; otherwise, let $j$ be the smallest index that minimizes $r_j(\la)$.
    \item Remove a part of size $j-1$ from $\la'$.
    \item Add a part of size $j$ to $\la$. 
    \item Let $g(\la)$ be the resulting partition.
\end{enumerate}

Continuing with the proof of Theorem \ref{lem:f}(a), if $\ell \geq 1$ and $\la \in \Rc_{m-1,n+1}^{\ge 2-\ell}(N-1)$, then $g$ decreases $\la_k$ by 1 for
$1 \leq k < j$ and increases $\la_k'$ by 1 for $1 \leq k \leq j$. Thus $r_j$ decreases by 1 and $r_k$ decreases by 2 for $1 \leq k < j$.
So 
$g(\la) \in \Rc^{\ge 2-\ell}(N)$ and  $f(g(\la))=\lambda$.  The only concern is whether $g(\la) \in \mathcal{P}_{m,n}$.  The map $g$ increases the number of parts of $\la$ by 1, but if $j(\la)=1$, $g$ does not decrease $\la_1$.  So $g(\la)$ will not belong to $\P_{m,n}$ if $j(\la)=1$ and $\la_1=n+1$.
But this means that $\tau(\la)=\la_1-\la'_1 = n+1-\la_1'$.  Combining with the fact that, since $\la \in \Rc_{m-1,n+1}^{\ge 2-\ell}(N-1)$, we must have
$\tau(\la) \leq 1-\ell$ and $\la_1' \leq m-1$, this gives
$$1-\ell \geq \tau(\lambda) \geq n-m+2,$$
contradicting the assumption that $n+\ell \geq m$.

To complete the proof of Lemma \ref{lem:f}(b), we need only  consider those $\la \in \mathcal{P}_{m-1,n+1}$ with $\tau(\la) > 0$ (those with $\tau(\la) \leq 0$ are covered by (a)) and show that
$g(\la) \in \Rc_{m,n}^{\ge 1}(N)$ and $f(g(\la))=\la.$

If $\tau(\la) \geq 2$, the map $g$ removes a part of size $d$ from $\la'$ and adds a part of size $d+1$ to $\la$, which increases the side of the Durfee square.  So $\mu =g(\la)$ satisfies that $d(\mu) = d(\la) + 1$, $r_{d(\mu)}(\mu) = 0$, and $d(\mu) = \mu_{d(\mu)} >\mu_{d(\mu)+1}$.  So, in this case,
$g(\la) \in \Rc_{m,n}^{\ge 1}(N)$ and $f(g(\la))=\la.$

In the remaining case, $\tau(\la) =1$.  As in (a), if $j=j(\la)$,
$g$ decreases $r_j$  by 1 and decreases $r_k$  by 2 for $1 \leq k < j$.
So $g(\la) \in \Rc^{\ge 1}(N)$ and  $f(g(\la))=u$.  As in case (a), the condition $n \geq m$ guarantees that $g(\la) \in \mathcal{P}_{m,n}$ even if $j(\la)=1$.
\end{proof}

Iterating $f$, we obtain the following bijections.

\begin{theorem}\label{thm:bij}
Let $m,n,\ell\ge0$ such that $n+\ell\ge m$. 
\begin{enumerate}[(a)]
\item The map $\bij:=f^{\ell+1}$ is a bijection
$$
\bij:\Rc_{m,n}^{\ge 1-\ell}(N)\to \P_{m-\ell-1,n+\ell+1}(N-\ell-1)    
$$
that preserves the Durfee rectangle.

\item The map $f^\ell$ is a bijection
$$
f^\ell:\Rc_{m.n}^{\ge 1-\ell}(N)\to   \Rc_{m-\ell,n+\ell}^{\ge 1}(N-\ell)
$$
that preserves the Durfee square and the Durfee rectangle.
\end{enumerate}
\end{theorem}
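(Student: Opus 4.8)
\section*{Proof proposal}

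The plan is to derive both statements by iterating the one-step bijection of Lemma~\ref{lem:f}, keeping track of how the parameters evolve and verifying that the standing hypothesis $n+\ell\ge m$ is preserved along the way. Note first that if $n+\ell\ge m$, then for the shifted parameters $(m-1,n+1,\ell-1)$ one has $(n+1)+(\ell-1)=n+\ell\ge m\ge m-1$, so the hypothesis is inherited; more generally it holds for $(m-k,n+k,\ell-k)$ for every $0\le k\le\ell$, since $n+\ell\ge m\ge m-k$.

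I would prove part (b) by induction on $\ell$. When $\ell=0$ the map $f^0$ is the identity on $\Rc_{m,n}^{\ge1}(N)$ and there is nothing to prove. Suppose $\ell\ge1$ and $\la\in\Rc_{m,n}^{\ge 1-\ell}(N)$. By Lemma~\ref{lem:f}(a), which applies since $n+\ell\ge m$, the partition $f(\la)$ lies in $\Rc_{m-1,n+1}^{\ge 2-\ell}(N-1)=\Rc_{m-1,n+1}^{\ge 1-(\ell-1)}(N-1)$, and $f$ preserves the Durfee square and the Durfee rectangle. As noted above, the hypothesis holds for $(m-1,n+1,\ell-1)$, so the inductive hypothesis gives that $f^{\ell-1}$ is a bijection from $\Rc_{m-1,n+1}^{\ge 1-(\ell-1)}(N-1)$ onto $\Rc_{m-\ell,n+\ell}^{\ge1}(N-\ell)$ preserving both Durfee statistics. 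Composing, $f^\ell=f^{\ell-1}\circ f$ is the asserted bijection.

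Part (a) follows by taking one more step. By part (b), $f^\ell$ maps $\Rc_{m,n}^{\ge 1-\ell}(N)$ bijectively onto $\Rc_{m-\ell,n+\ell}^{\ge1}(N-\ell)$. The hypothesis of Lemma~\ref{lem:f}(b) for the parameters $(m-\ell,n+\ell)$ is precisely $n+\ell\ge m$, so that lemma shows $f$ maps $\Rc_{m-\ell,n+\ell}^{\ge1}(N-\ell)$ bijectively onto $\P_{m-\ell-1,n+\ell+1}(N-\ell-1)$, preserving the Durfee rectangle. Hence $\bij=f^{\ell+1}=f\circ f^\ell$ is a Durfee-rectangle-preserving bijection onto $\P_{m-\ell-1,n+\ell+1}(N-\ell-1)$.

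There is no real obstacle here beyond this bookkeeping; the only point worth emphasizing is the discrepancy between the two parts. Each of the $\ell$ steps used in part (b) is governed by Lemma~\ref{lem:f}(a) and preserves both the Durfee square and the Durfee rectangle, but the extra step needed for part (a) is governed by Lemma~\ref{lem:f}(b), which preserves only the Durfee rectangle: when the smallest rank of the input equals $0$, the map $f$ can shrink the Durfee square, as in the special case treated in the proof of Lemma~\ref{f_lemma}. This is exactly why $\bij$ is only claimed to preserve the Durfee rectangle.
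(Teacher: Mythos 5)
Your proof is correct and matches the paper's approach exactly: the paper establishes Theorem~\ref{thm:bij} simply by iterating Lemma~\ref{lem:f} (its entire justification is the sentence ``Iterating $f$, we obtain the following bijections''), and your write-up supplies the parameter bookkeeping that the paper leaves implicit. One tiny imprecision: the hypothesis of Lemma~\ref{lem:f}(b) for the parameters $(m-\ell,n+\ell)$ is $n+\ell\ge m-\ell$ rather than ``precisely'' $n+\ell\ge m$, but since $\ell\ge0$ the latter implies the former, so nothing is affected.
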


\begin{example}
In Figure \ref{fig:fbij_partitions}, if we let $m=6$, $n=4$, $\ell=2$, and view $\la$ as an element of
$\Rc_{6,4}^{\ge -1}(16)$, then
$f(\la) \in  \Rc_{5,5}^{\ge 0}(15)$,
$f^2(\la) \in  \Rc_{4,6}^{\ge 1}(14)$, and
$\Theta(\la) =f^3(\la) \in  \P_{3,7}(13$). Here, the bijection $\Theta$ preserves the Durfee rectangle, but not the Durfee square, whereas the bijection $f^2$ preserves both.
\end{example}

We can now use Theorem \ref{thm:bij} to prove Theorems~\ref{thm:central_drect} and~\ref{thm:central-simplified}.

\begin{proof}[Proof of Theorem~\ref{thm:central_drect}]
We express the sum over $\R_{m,n}^{\ge 1-\ell}$ as
\begin{equation}\sum_{\la \in \R_{m,n}^{\ge 1-\ell}}t^{\dr(\la)}q^{|\la|}=
\sum_{\la \in \P_{m,n}} t^{\dr(\la)}q^{|\la|} - 
\sum_{\la \in \Rc_{m,n}^{\ge 1-\ell}}t^{\dr(\la)}q^{|\la|}.
\label{eq:drcount}
\end{equation}

A partition $\la \in \P_{m,n}$ with $\dr(\la)=i$ can be viewed as an $i \times (i+1)$ rectangle with a partition in $\P_{i,n-i-1}$ to its right and the conjugate of a partition in $\P_{i+1,m-i}$ below. So

\begin{equation}
\sum_{\la \in \P_{m,n}} t^{\dr(\la)}q^{|\la|} =
\sum_{i \geq 0}t^iq^{i(i+1)}\qbin{n-1}{i} \qbin{m+1}{i+1}.
\label{eq:drPmn}
\end{equation}

By Theorem \ref{thm:bij}(a) and equation \eqref{eq:drPmn},
\begin{equation}
\sum_{\la \in \Rc_{m,n}^{\ge 1-\ell}}t^{\dr(\la)}q^{|\la|} = 
\sum_{\la \in \P_{m-\ell-1,n+\ell+1}} t^{\dr(\la)}q^{\ell +1+|\la|} =
q^{\ell+1}\sum_{i \geq 0}t^iq^{i(i+1)}\qbin{n+\ell}{i} \qbin{m-\ell}{i+1}.
\label{eq:dr_thm_a}
\end{equation}
Combining equations  \eqref{eq:drcount},  \eqref{eq:drPmn}, and \eqref{eq:dr_thm_a}, we obtained the stated formula.
\end{proof}

\begin{proof}[Proof of Theorem~\ref{thm:central-simplified}]
We now count $\R_{m,n}^{\ge 1-\ell}$  according to the side of the Durfee square as
\begin{equation}\sum_{\la \in \R_{m,n}^{\ge 1-\ell}}t^{d(\la)}q^{|\la|}=
\sum_{\la \in \P_{m,n}} t^{d(\la)}q^{|\la|} - 
\sum_{\la \in \Rc_{m,n}^{\ge 1-\ell}}t^{d(\la)}q^{|\la|}.
\label{eq:dcount}
\end{equation}
By Theorem \ref{thm:bij}(b),
\begin{equation}
\sum_{\la \in \Rc_{m,n}^{\ge 1-\ell}}t^{d(\la)}q^{|\la|}=
q^{\ell}\sum_{\la \in \Rc_{m-\ell,n+\ell}^{\ge 1}}t^{d(\la)}q^{|\la|}.
\label{eq:d_thm_b}
\end{equation}
By conjugation, and then by equations~\eqref{eq:phib} and~\eqref{eq:RV},
$$\sum_{\la \in \R_{m-\ell,n+\ell}^{\ge 1}}t^{d(\la)}q^{|\la|} =
\sum_{\la \in \R_{n+\ell,m-\ell}^{\le -1}}t^{d(\la)}q^{|\la|} =
\sum_{P \in \V_{n+\ell,m-\ell}^{\geq 0}} t^{\des(P)}q^{\maj(P)}=
\sum_{P \in \A_{n+\ell,m-\ell}^{0}} t^{\des(P)}q^{\maj(P)}.
$$
Note that $\V_{n+\ell,m-\ell}^{\geq 0}=\A_{n+\ell,m-\ell}^{0}$ since the conditions $\ell \geq 0$ and $n+\ell \geq m$ guarantee that $(n+\ell)-(m-\ell) \geq 0$.
Taking complements above, the sequence of equalities  still holds, so now apply equation~\eqref{Ac0} to get
$$\sum_{\la \in \Rc_{m-\ell,n+\ell}^{\ge 1}}t^{d(\la)}q^{|\la|} =
\sum_{P \in \Ac_{n+\ell,m-\ell}^{0}} t^{\des(P)}q^{\maj(P)} =
\sum_{i \ge0}t^iq^{i^2}\qbin{n+\ell-1}{i-1}\qbin{m-\ell+1}{i+1}.$$
Combining this with equations \eqref{eq:dcount} and \eqref{eq:d_thm_b}, and using the right equality in Lemma~\ref{lem:qbin2}, we obtain
$$\sum_{\la \in \R_{m,n}^{\ge 1-\ell}}t^{d(\la)}q^{|\la|}=
\sum_{i \geq 0}t^iq^{i^2}\qbin{n}{i} \qbin{m}{i} -
q^{\ell}\sum_{i \ge0}t^iq^{i^2}\qbin{n+\ell-1}{i-1}\qbin{m-\ell+1}{i+1}.\qedhere
$$
\end{proof}

\subsection{Removing the bounding box}

Disregarding the side of the Durfee square, the bijection $\bij$ from Theorem~\ref{thm:bij} gives the following formula, which corresponds to setting $t=1$ in Theorems~\ref{thm:central_drect} or~\ref{thm:central-simplified}.

\begin{theorem}\label{thm:central_t1}
Let $m,n,\ell\ge0$ such that $n+\ell\ge m$.  Then
$$
\sum_{\la\in \R^{\ge 1-\ell}_{m,n}} q^{|\la|}
=\qbin{m+n}{m}-q^{\ell+1}\qbin{m+n}{m-\ell-1}.
$$
\end{theorem}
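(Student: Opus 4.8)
The plan is to run the same complement argument used for Theorems~\ref{thm:central_drect} and~\ref{thm:central-simplified}, but now with the area statistic alone, so that the bijection $\bij$ of Theorem~\ref{thm:bij}(a) converts everything into counting partitions inside a rectangle by area. Concretely, I would first write
\[
\sum_{\la\in\R^{\ge 1-\ell}_{m,n}}q^{|\la|}
=\sum_{\la\in\P_{m,n}}q^{|\la|}-\sum_{\la\in\Rc^{\ge 1-\ell}_{m,n}}q^{|\la|},
\]
and evaluate the first sum as $\qbin{m+n}{n}=\qbin{m+n}{m}$ using Lemma~\ref{lem:qbin}.

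For the second sum, since the hypotheses $m,n,\ell\ge0$ and $n+\ell\ge m$ are exactly those required by Theorem~\ref{thm:bij}(a), the map $\bij=f^{\ell+1}$ is a bijection $\Rc^{\ge 1-\ell}_{m,n}(N)\to\P_{m-\ell-1,n+\ell+1}(N-\ell-1)$ for every $N$. Summing over $N$ and pulling out the factor $q^{\ell+1}$,
\[
\sum_{\la\in\Rc^{\ge 1-\ell}_{m,n}}q^{|\la|}
=q^{\ell+1}\sum_{\mu\in\P_{m-\ell-1,n+\ell+1}}q^{|\mu|}
=q^{\ell+1}\qbin{m+n}{m-\ell-1},
\]
again by Lemma~\ref{lem:qbin}, using that $(m-\ell-1)+(n+\ell+1)=m+n$. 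Subtracting the two displayed expressions yields the claimed identity. The one point deserving a sentence is the degenerate range $m\le\ell$: there $\qbin{m+n}{m-\ell-1}=0$ by the standard convention, and one checks directly that $\Rc^{\ge 1-\ell}_{m,n}=\emptyset$, since any $\la\in\P_{m,n}$ has $\la'_i\le m$ and $\la_i\ge i$, hence $r_i(\la)\ge i-m\ge 1-m\ge 1-\ell$, so no rank can be $\le-\ell$; both sides then equal $\qbin{m+n}{m}$, consistent with the fact that the target $\P_{m-\ell-1,n+\ell+1}$ is empty in this case.

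Alternatively, the formula is just the $t=1$ specialization of Theorem~\ref{thm:central-simplified} (or Theorem~\ref{thm:central_drect}); setting $t=1$ there, the first sum collapses by the $t=1$ case of Lemma~\ref{lem:qbin2}, and the second sum must be identified with $q^{\ell+1}\qbin{m+n}{m-\ell-1}$, i.e. one needs the $q$-Vandermonde-type identity $\sum_{i\ge0}q^{i^2-1}\qbin{n+\ell-1}{i-1}\qbin{m-\ell+1}{i+1}=\qbin{m+n}{m-\ell-1}$, which follows from a Durfee-rectangle decomposition of partitions inside an $(m-\ell-1)\times(n+\ell+1)$ box. I do not expect any real obstacle: the substantive content (the bijection $\bij$ and its Durfee-rectangle-preserving property) is already established in Theorem~\ref{thm:bij}, so this is essentially a bookkeeping argument, with the mild edge case $m\le\ell$ being the only subtlety.
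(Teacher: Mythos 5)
Your proposal is correct and follows the same route as the paper: apply the bijection $\bij$ of Theorem~\ref{thm:bij}(a) to evaluate $\sum_{\la\in\Rc^{\ge 1-\ell}_{m,n}}q^{|\la|}=q^{\ell+1}\qbin{m+n}{m-\ell-1}$ and subtract from $\qbin{m+n}{m}$. Your explicit treatment of the degenerate range $m\le\ell$ is a small bonus the paper leaves implicit, and your noted alternative ($t=1$ in Theorem~\ref{thm:central-simplified}) is exactly the remark the paper itself makes.
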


\begin{proof}
By Theorem~\ref{thm:bij}(a), 
$$\sum_{\la\in \Rc_{m,n}^{\ge 1-\ell}} q^{|\la|}=q^{\ell+1}\sum_{\la\in\P_{m-\ell-1,n+\ell+1}}q^{|\la|}=q^{\ell+1}\qbin{m+n}{m-\ell-1}.$$
\end{proof}

Combining Theorem~\ref{thm:central_t1} when $m=n$ and Corollary~\ref{cor:lopsided} with $t=1$, we get the following. Note that for $\ell\in\{0,1\}$, both cases apply.

\begin{corollary}\label{cor:combined_t1}
For all $\ell\in\Z$,
$$
\sum_{\la\in \R^{\ge 1-\ell}_n} q^{|\la|}
=\begin{cases}
C_{n+\ell}(q,q^{-\ell}) & \text{if $\ell\le1$}, \\
\qbin{2n}{n}-q^{\ell+1}\qbin{2n}{n-\ell-1} & \text{if $\ell\ge0$}.
\end{cases}
$$
\end{corollary}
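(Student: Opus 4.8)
The plan is to obtain the two cases as immediate specializations of the two results quoted just above the statement: for the first case I would specialize Corollary~\ref{cor:lopsided} at $t=1$, for the second case I would specialize Theorem~\ref{thm:central_t1} at $m=n$, and then I would spend one line on the range $\ell<-n$, which is not literally covered by Corollary~\ref{cor:lopsided} (whose hypothesis is $-n\le\ell\le1$), and one line on why the two formulas must agree on the overlap $\ell\in\{0,1\}$.

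For the first case, take $t=1$ in Corollary~\ref{cor:lopsided}: for $-n\le\ell\le1$ the left-hand side becomes $\sum_{\la\in\R^{\ge 1-\ell}_n}q^{|\la|}$ and the right-hand side becomes $C_{n+\ell}(q,q^{-\ell})$, which is exactly the claimed expression. For $\ell<-n$, so that $1-\ell\ge n+2$, observe that every nonempty $\la\in\P_n$ has first rank $r_1(\la)=\la_1-\la_1'\le n-1<1-\ell$, hence $\R^{\ge 1-\ell}_n=\{\emptyset\}$ and $\sum_{\la\in\R^{\ge 1-\ell}_n}q^{|\la|}=1$; on the other hand $n+\ell<0$, so the sum in the closed form~\eqref{eq:qtCat} is empty and $C_{n+\ell}(q,q^{-\ell})=1$ as well (this is consistent with $C_m=1$ for $m\in\{0,1\}$, which is visible from~\eqref{eq:qtCat4D}). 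Thus the first case holds for all $\ell\le1$.

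For the second case, note that with $m=n$ the hypothesis $n+\ell\ge m$ of Theorem~\ref{thm:central_t1} is equivalent to $\ell\ge0$, so that theorem applies and gives $\sum_{\la\in\R^{\ge 1-\ell}_n}q^{|\la|}=\qbin{m+n}{m}-q^{\ell+1}\qbin{m+n}{m-\ell-1}=\qbin{2n}{n}-q^{\ell+1}\qbin{2n}{n-\ell-1}$. Finally, for $\ell\in\{0,1\}$ both cases apply; since each side computes the single power series $\sum_{\la\in\R^{\ge 1-\ell}_n}q^{|\la|}$, the two closed forms necessarily coincide — for $\ell=0$ this is the standard identity $C_n(q,1)=\frac{1}{[n+1]_q}\qbin{2n}{n}=\qbin{2n}{n}-q\qbin{2n}{n-1}$, which reduces to $[n+1]_q-q[n]_q=1$, and $\ell=1$ is analogous. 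There is no genuine obstacle here: the only care required is lining up the hypotheses of the two cited results under $m=n$ (which partitions $\Z$ into the two overlapping ranges $\ell\le1$ and $\ell\ge0$) and handling the degenerate range $\ell<-n$ as above.
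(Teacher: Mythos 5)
Your proof is correct and is essentially the paper's own argument: the paper obtains this corollary precisely by combining Theorem~\ref{thm:central_t1} with $m=n$ and Corollary~\ref{cor:lopsided} with $t=1$, noting that both cases apply for $\ell\in\{0,1\}$. Your additional treatment of the degenerate range $\ell<-n$ (where $\R^{\ge 1-\ell}_n=\{\emptyset\}$) and the explicit verification of the overlap identity are careful touches the paper omits, but they do not change the route.
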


In Theorems \ref{thm:central_drect}, \ref{thm:central-simplified} and \ref{thm:central_t1}, partitions were required to be inside a rectangle. Next we obtain corollaries of these theorems by taking the limit as the side lengths of the rectangle go to infinity.

Letting $n\to\infty$ in Theorem~\ref{thm:central_t1} gives the following. We add the condition $m>\ell$ because, when it does not hold, the right summand in Theorem~\ref{thm:central_t1} is $0$.

\begin{corollary}
For $m>\ell\ge0$,
$$
\sum_{\la\in \R^{\ge 1-\ell}_{m,\infty}} q^{|\la|}=
\prod_{i=1}^{m}\frac{1}{1-q^i}-q^{\ell+1}\prod_{i=1}^{m-\ell-1}\frac{1}{1-q^i}.
$$
\end{corollary}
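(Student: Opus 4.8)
The plan is to obtain this corollary as the $n\to\infty$ limit of Theorem~\ref{thm:central_t1}. First I would check that $\sum_{\la\in\R^{\ge 1-\ell}_{m,\infty}}q^{|\la|}$ is a well-defined formal power series and equals the limit of the corresponding finite sums: any partition of $N$ with at most $m$ parts has largest part at most $N$, so it lies in $\R^{\ge 1-\ell}_{m,n}$ for every $n\ge N$; hence the coefficient of $q^N$ in $\sum_{\la\in\R^{\ge 1-\ell}_{m,n}}q^{|\la|}$ is independent of $n$ once $n\ge N$, and therefore $\sum_{\la\in\R^{\ge 1-\ell}_{m,\infty}}q^{|\la|}=\lim_{n\to\infty}\sum_{\la\in\R^{\ge 1-\ell}_{m,n}}q^{|\la|}$ in $\Z[[q]]$.

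Next, for $n\ge m$ we have $n+\ell\ge m$ (as $\ell\ge0$), so Theorem~\ref{thm:central_t1} applies and gives
\[
\sum_{\la\in\R^{\ge 1-\ell}_{m,n}}q^{|\la|}=\qbin{m+n}{m}-q^{\ell+1}\qbin{m+n}{m-\ell-1}.
\]
It remains to take the limit of each summand separately. By the combinatorial interpretation of the $q$-binomial coefficients recorded in Lemma~\ref{lem:qbin}, $\qbin{m+n}{m}$ is the area generating function for partitions fitting in an $m\times n$ box; letting $n\to\infty$ these become the partitions with at most $m$ parts, so $\lim_{n\to\infty}\qbin{m+n}{m}=\prod_{i=1}^{m}(1-q^i)^{-1}$ (equivalently $\lim_{n\to\infty}(q)_{m+n}/((q)_m(q)_n)=1/(q)_m$, the same kind of limit already used in~\eqref{eq:limCq1}). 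Likewise, because $m>\ell$ forces $m-\ell-1\ge0$, the coefficient $\qbin{m+n}{m-\ell-1}=\qbin{(m-\ell-1)+(n+\ell+1)}{m-\ell-1}$ is the area generating function for partitions fitting in an $(m-\ell-1)\times(n+\ell+1)$ box, whose limit as $n\to\infty$ is $\prod_{i=1}^{m-\ell-1}(1-q^i)^{-1}$.

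Combining the two limits yields the claimed identity. The hypothesis $m>\ell$ is used precisely to keep the second term from being trivial: when $m\le\ell$ one has $\qbin{m+n}{m-\ell-1}=0$ and the formula degenerates to the generating function for partitions with at most $m$ parts. I do not expect a real obstacle here; the only point meriting a word of care is that the limit is taken in the ring of formal power series, where it is legitimate because each coefficient of $q^N$ stabilizes, so there are no convergence issues and the two summands can be treated independently.
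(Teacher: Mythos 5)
Your proposal is correct and follows essentially the same route as the paper, which also obtains this corollary by letting $n\to\infty$ in Theorem~\ref{thm:central_t1} and noting that $m>\ell$ is exactly the condition keeping the second term nonzero. Your added remarks on coefficientwise stabilization and the limits of the $q$-binomial coefficients just make explicit what the paper leaves implicit.
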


Letting now $m\to\infty$, we recover the following result, which is equivalent to Corollary~\ref{cor:AB}.

\begin{corollary}
For $\ell\ge0$,
$$
    \sum_{\la\in \R^{\ge 1-\ell}} q^{|\la|}=
    \prod_{\substack{i\ge1\\ i\neq \ell+1}}\frac{1}{1-q^i}.
$$
\end{corollary}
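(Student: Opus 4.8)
The plan is to pass to the limit $m\to\infty$ in the preceding corollary. For each fixed $N\ge0$, a partition of $N$ has at most $N$ parts, so whenever $m\ge N$ the coefficient of $q^N$ in $\sum_{\la\in\R^{\ge 1-\ell}_{m,\infty}}q^{|\la|}$ equals $|\R^{\ge 1-\ell}(N)|$ exactly; hence $\lim_{m\to\infty}\sum_{\la\in\R^{\ge 1-\ell}_{m,\infty}}q^{|\la|}=\sum_{\la\in\R^{\ge 1-\ell}}q^{|\la|}$ as formal power series, and the hypothesis $m>\ell$ of the preceding corollary is harmless since it holds for all large $m$. On the right-hand side, both $\prod_{i=1}^{m}\frac{1}{1-q^i}$ and $\prod_{i=1}^{m-\ell-1}\frac{1}{1-q^i}$ converge coefficientwise to $\prod_{i\ge1}\frac{1}{1-q^i}$, so the limit of the right-hand side is $(1-q^{\ell+1})\prod_{i\ge1}\frac{1}{1-q^i}=\prod_{i\ge1,\,i\ne\ell+1}\frac{1}{1-q^i}$, which is the stated identity.

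A cleaner, bijective route is to take the limit directly in the bijection $\bij$ of Theorem~\ref{thm:bij}(a). Letting $m,n\to\infty$ (tracking coefficients of $q^N$ as above, since every $\la\in\Rc^{\ge 1-\ell}(N)$ has at most $N$ parts and largest part at most $N$) turns $\bij$ into a bijection $\bij:\Rc^{\ge 1-\ell}(N)\to\P(N-\ell-1)$ for every $N$. Therefore $|\R^{\ge 1-\ell}(N)|=|\P(N)|-|\P(N-\ell-1)|$, and multiplying by $q^N$ and summing over $N$ gives $\sum_{\la\in\R^{\ge 1-\ell}}q^{|\la|}=(1-q^{\ell+1})\prod_{i\ge1}\frac{1}{1-q^i}=\prod_{i\ge1,\,i\ne\ell+1}\frac{1}{1-q^i}$.

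Neither argument presents a genuine obstacle; the only care needed is the standard justification that the relevant infinite products and sums are well-defined formal power series and that the limiting identity can be checked coefficient by coefficient. It is worth closing by noting that $\prod_{i\ge1,\,i\ne\ell+1}\frac{1}{1-q^i}$ is precisely the generating function for $\P^{\ne\ell+1}$, so this corollary is indeed equivalent to Corollary~\ref{cor:AB}, as asserted in the text.
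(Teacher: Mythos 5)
Your first argument is exactly the paper's proof: the corollary is obtained by letting $m\to\infty$ in the preceding corollary (itself the $n\to\infty$ limit of Theorem~\ref{thm:central_t1}), with the limit justified coefficientwise, and the paper likewise notes the equivalence with Corollary~\ref{cor:AB}. Your second, more direct route through the bijection $\bij$ is just a collapsed version of the same chain of reasoning, so the proposal is correct and matches the paper's approach.
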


Letting first $n\to\infty$ and then $m\to\infty$ in Theorem~\ref{thm:central_drect}, we obtain the following.

\begin{corollary}
For $m,\ell\ge0$,
$$
    \sum_{\la\in \R_{m,\infty}^{\ge 1-\ell}} t^{\dr(\la)}q^{|\la|}=
    \sum_{i\ge0}\frac{t^i q^{i(i+1)}}{(q)_i}\left(\qbin{m+1}{i+1}-q^{\ell+1}\qbin{m-\ell}{i+1}\right).
$$
\end{corollary}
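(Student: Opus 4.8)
The plan is to deduce this corollary from Theorem~\ref{thm:central_drect} by letting $n\to\infty$, in the same spirit as the passage from Corollary~\ref{cor:lopsided} to Corollary~\ref{cor:lopsidedlimit}. Everything takes place in the ring of formal power series in $q$ with coefficients that are polynomials in $t$, and $\lim_{n\to\infty}$ is understood coefficientwise.

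First I would identify the left-hand side as such a limit. Since partitions with at most $m$ parts have a finite largest part, $\R_{m,\infty}^{\ge1-\ell}=\bigcup_{n\ge0}\R_{m,n}^{\ge1-\ell}$; and since for each $N$ there are only finitely many partitions of $N$ — all of which lie in $\R_{m,n}^{\ge1-\ell}$ (when they satisfy the rank condition) as soon as $n\ge N$ — the coefficient of any monomial $t^jq^N$ in $\sum_{\la\in\R_{m,n}^{\ge1-\ell}}t^{\dr(\la)}q^{|\la|}$ is independent of $n$ for $n\ge N$, with limiting value the corresponding coefficient of $\sum_{\la\in\R_{m,\infty}^{\ge1-\ell}}t^{\dr(\la)}q^{|\la|}$. (Note $\dr(\la)\le m$ for partitions with at most $m$ parts, so these are honest polynomials in $t$.) Hence $\sum_{\la\in\R_{m,\infty}^{\ge1-\ell}}t^{\dr(\la)}q^{|\la|}=\lim_{n\to\infty}\sum_{\la\in\R_{m,n}^{\ge1-\ell}}t^{\dr(\la)}q^{|\la|}$.

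Next, once $n+\ell\ge m$ the hypothesis of Theorem~\ref{thm:central_drect} holds, so $\sum_{\la\in\R_{m,n}^{\ge1-\ell}}t^{\dr(\la)}q^{|\la|}=\sum_{i\ge0}t^iq^{i(i+1)}\bigl(\qbin{n-1}{i}\qbin{m+1}{i+1}-q^{\ell+1}\qbin{n+\ell}{i}\qbin{m-\ell}{i+1}\bigr)$, and I would take the limit on the right termwise. Exchanging $\lim_{n\to\infty}$ with $\sum_{i\ge0}$ is legitimate because the factor $q^{i(i+1)}$ forces, for each fixed $N$, only finitely many $i$ to contribute to the coefficient of $q^N$; and for fixed $i$ one has the elementary limit $\lim_{n\to\infty}\qbin{n}{i}=\lim_{n\to\infty}\frac{(q)_n}{(q)_i(q)_{n-i}}=\frac1{(q)_i}$, since $(q)_n/(q)_{n-i}=(1-q^{n-i+1})\cdots(1-q^n)\to1$ coefficientwise. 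Applying this to both $\qbin{n-1}{i}$ and $\qbin{n+\ell}{i}$ collapses the right-hand side to $\sum_{i\ge0}\frac{t^iq^{i(i+1)}}{(q)_i}\bigl(\qbin{m+1}{i+1}-q^{\ell+1}\qbin{m-\ell}{i+1}\bigr)$, which is the claimed identity; the degenerate cases $m\le\ell$, where $\qbin{m-\ell}{i+1}=0$, are consistent on both sides. The only point requiring care is the justification of the two interchanges of a limit with an infinite sum, but as indicated each reduces to the finiteness observations above, so there is no genuine obstacle.
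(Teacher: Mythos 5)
Your proposal is correct and follows exactly the route the paper intends: the paper's entire justification for this corollary is the one-line remark that it follows by letting $n\to\infty$ in Theorem~\ref{thm:central_drect}, and you have simply supplied the (routine) details of that limit, including the coefficientwise stabilization and $\lim_{n\to\infty}\qbin{n}{i}=1/(q)_i$. No issues; note also that the sum over $i$ is actually finite since $\qbin{m+1}{i+1}$ vanishes for $i>m$, which makes the interchange of limit and sum immediate.
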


\begin{corollary}
For $\ell\ge0$,
$$
    \sum_{\la\in \R^{\ge 1-\ell}} t^{\dr(\la)}q^{|\la|}=
    (1-q^{\ell+1})\sum_{\mu\in \P} t^{\dr(\mu)}q^{|\mu|}=
    (1-q^{\ell+1})\sum_{i\ge0}\frac{t^i q^{i(i+1)}}{(q)_i(q)_{i+1}}.
$$
\end{corollary}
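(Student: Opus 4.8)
The plan is to derive the corollary from the bijection $\bij$ of Theorem~\ref{thm:bij}(a) by letting the bounding box grow to infinity, and then to evaluate the generating function $\sum_{\mu\in\P}t^{\dr(\mu)}q^{|\mu|}$ directly via the Durfee rectangle decomposition. An alternative route is simply to pass to the limit in the preceding corollary.

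First I would move to unbounded partitions. Recall from Section~\ref{sec:intro} that $\Q=\lim_{n\to\infty}\Q_n$ for any set $\Q$ of partitions. The map $\bij=f^{\ell+1}$ is defined on all of $\Rc^{\ge1-\ell}$ with no reference to a bounding box, and Theorem~\ref{thm:bij}(a) applied with $m=n\to\infty$ (the hypothesis $n+\ell\ge m$ holds because $\ell\ge0$) shows that it restricts to a bijection $\bij\colon\Rc^{\ge1-\ell}(N)\to\P(N-\ell-1)$ preserving $\dr$. Since $\bij$ lowers the area by exactly $\ell+1$ and leaves $\dr$ fixed, summing $t^{\dr(\la)}q^{|\la|}$ over $\la\in\Rc^{\ge1-\ell}$ gives $q^{\ell+1}\sum_{\mu\in\P}t^{\dr(\mu)}q^{|\mu|}$. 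As $\R^{\ge1-\ell}=\P\setminus\Rc^{\ge1-\ell}$, subtracting this from $\sum_{\la\in\P}t^{\dr(\la)}q^{|\la|}$ yields the middle expression $(1-q^{\ell+1})\sum_{\mu\in\P}t^{\dr(\mu)}q^{|\mu|}$.

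It remains to evaluate $\sum_{\mu\in\P}t^{\dr(\mu)}q^{|\mu|}$. I would split a partition $\mu$ with $\dr(\mu)=i$ into its Durfee rectangle (an $i\times(i+1)$ block, contributing $t^iq^{i(i+1)}$), the part of the first $i$ rows lying to the right of it (an arbitrary partition with at most $i$ parts, contributing $1/(q)_i$), and the rows below it (an arbitrary partition with largest part at most $i+1$, contributing $1/(q)_{i+1}$, the bound $i+1$ being precisely what forbids an $(i+1)\times(i+2)$ rectangle). Summing the product over $i\ge0$ gives $\sum_{i\ge0}t^iq^{i(i+1)}/\bigl((q)_i(q)_{i+1}\bigr)$, which is the last expression in the statement. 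Equivalently, one may let $m\to\infty$ in the preceding corollary and use $\lim_{m\to\infty}\qbin{m+1}{i+1}=\lim_{m\to\infty}\qbin{m-\ell}{i+1}=1/(q)_{i+1}$ to reach the same formula directly.

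The only point requiring care is the interchange of limits: one must check that the truncated generating functions over $\Q_n$ converge coefficient-wise to the series over $\Q$. This is routine, since for each fixed $N$ the sets $\Q_n(N)$ stabilize to $\Q(N)$ as soon as $n\ge N$, so each coefficient of $q^N$ is eventually constant; everything else is bookkeeping.
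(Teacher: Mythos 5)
Your proof is correct and is essentially the paper's argument: the paper obtains this corollary by letting $m\to\infty$ in the preceding corollary (itself a limit of Theorem~\ref{thm:central_drect}, whose proof rests on exactly the two ingredients you use, namely the $\dr$-preserving bijection of Theorem~\ref{thm:bij}(a) and the Durfee-rectangle decomposition giving $\sum_{\mu\in\P}t^{\dr(\mu)}q^{|\mu|}=\sum_{i\ge0}t^iq^{i(i+1)}/((q)_i(q)_{i+1})$). Carrying the subtraction out directly in the unbounded setting rather than taking the limit of the boxed formula is only a cosmetic difference, and you even note the paper's exact route as your alternative.
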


Similarly, letting first $n\to\infty$ and then $m\to\infty$ in Theorem~\ref{thm:central-simplified}, we obtain the following.

\begin{corollary}\label{cor:main-limit}
For $m,\ell\ge0$,
$$\sum_{\la\in \R_{m,\infty}^{\ge 1-\ell}} t^{d(\la)}q^{|\la|}
    =\sum_{i\ge1} \frac{t^i q^{i^2}}{(q)_i} \left( \qbin{m}{i}-  {q^\ell(1-q^i)}\qbin{m-\ell+1}{i+1}\right).$$
\end{corollary}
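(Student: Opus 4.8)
The plan is to obtain Corollary~\ref{cor:main-limit} by taking the limit $n \to \infty$ in Theorem~\ref{thm:central-simplified}, exactly as the surrounding text announces. Recall that Theorem~\ref{thm:central-simplified} states
\[
\sum_{\la\in \R_{m,n}^{\ge 1-\ell}} t^{d(\la)}q^{|\la|}=
\sum_{i\ge0} t^i q^{i^2}\left(\qbin{n}{i}\qbin{m}{i}-q^\ell\qbin{n+\ell-1}{i-1}\qbin{m-\ell+1}{i+1}\right),
\]
valid for $m,n,\ell\ge0$ with $n+\ell\ge m$; since $m$ and $\ell$ are fixed, this hypothesis holds for all sufficiently large $n$, so the limit is legitimate. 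As $n\to\infty$ the left-hand side becomes $\sum_{\la\in \R^{\ge 1-\ell}_{m,\infty}} t^{d(\la)}q^{|\la|}$, the generating function for partitions with all ranks at least $1-\ell$ and largest part at most $m$ (no bound on the number of parts). On the right-hand side I would use the standard facts $\lim_{n\to\infty}\qbin{n}{i} = \frac{1}{(q)_i}$ and $\lim_{n\to\infty}\qbin{n+\ell-1}{i-1} = \frac{1}{(q)_{i-1}}$ as formal power series in $q$, applied termwise in $i$ (the sum over $i$ converges coefficientwise because each term carries a factor $q^{i^2}$). Note also that the $i=0$ term on the right vanishes after the limit, since $\qbin{m}{0}=1$ contributes $1$ from the first product but the second product's $i=0$ term is $q^\ell\qbin{\ell-1}{-1}\qbin{m+1}{1}$ wait — more carefully, the $i=0$ term gives $t^0q^0(\frac{1}{(q)_0}\qbin{m}{0} - q^\ell\frac{1}{(q)_{-1}}\qbin{m+1}{1})$; since $\qbin{n+\ell-1}{-1}=0$ this is just $1$; but the stated formula begins the sum at $i\ge1$, so I should check that the constant term $1$ is correctly absorbed — actually $\sum_{\la\in\R^{\ge1-\ell}_{m,\infty}}$ does include the empty partition contributing $1$, matching the $i=0$ term, and the displayed answer starting at $i\ge1$ together with the implicit $1$ must agree; I will either keep the $i=0$ term explicitly or verify it equals $1$, whichever makes the bookkeeping cleanest.

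Carrying this out: after the limit the right-hand side is
\[
\sum_{i\ge0} t^i q^{i^2}\left(\frac{1}{(q)_i}\qbin{m}{i}-\frac{q^\ell}{(q)_{i-1}}\qbin{m-\ell+1}{i+1}\right),
\]
and I would then factor $\frac{1}{(q)_i}$ out of both terms, writing $\frac{1}{(q)_{i-1}} = \frac{1-q^i}{(q)_i}$, to reach
\[
\sum_{i\ge0} \frac{t^i q^{i^2}}{(q)_i}\left(\qbin{m}{i}-q^\ell(1-q^i)\qbin{m-\ell+1}{i+1}\right).
\]
For $i=0$ the bracket is $\qbin{m}{0} - q^\ell(1-q^0)\qbin{m-\ell+1}{1} = 1 - 0 = 1$, so the $i=0$ term contributes exactly $1$, the empty-partition term, and the remaining sum over $i\ge1$ is precisely the claimed expression. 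This matches the statement of Corollary~\ref{cor:main-limit} (with the convention that the displayed sum over $i\ge1$ is understood together with the constant $1$, or equivalently one may write the sum from $i\ge0$).

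The only genuinely delicate point — and the one I would state carefully rather than wave away — is the justification that one may pass to the limit $n\to\infty$ inside the (infinite) sum over $i$ on the right-hand side and inside the generating function on the left-hand side. On the left this is routine: $\R^{\ge1-\ell}_{m,n}(N) = \R^{\ge1-\ell}_{m,\infty}(N)$ for all $n\ge N$ (a partition of $N$ with largest part $\le m$ has at most $N$ parts), so each coefficient of $q^N t^k$ stabilizes. On the right, for fixed $N$ only finitely many values of $i$ (namely $i\le\sqrt N$, because of the $q^{i^2}$ factor) contribute to the coefficient of $q^N$, and for each such $i$ the $q$-binomial coefficients $\qbin{n}{i}$ and $\qbin{n+\ell-1}{i-1}$ have coefficients that stabilize as $n\to\infty$; hence the coefficient of $q^Nt^k$ on the right also stabilizes, to the coefficient in the claimed limit. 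Everything else is a short, mechanical simplification of $q$-binomial expressions, so I would present the limit argument in one paragraph and the algebraic rearrangement to the factored form in one or two lines.
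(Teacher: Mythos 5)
Your proof is correct and follows exactly the route the paper takes: letting $n\to\infty$ in Theorem~\ref{thm:central-simplified}, using $\qbin{n}{i}\to 1/(q)_i$, $\qbin{n+\ell-1}{i-1}\to 1/(q)_{i-1}$, and $1/(q)_{i-1}=(1-q^i)/(q)_i$, with the coefficientwise stabilization argument justifying the termwise limit. You are also right that the $i=0$ term of the limiting sum equals $1$ (accounting for the empty partition), so the displayed sum should start at $i\ge0$ or carry an explicit $+1$; that is a minor indexing slip in the statement, not a flaw in your argument.
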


\begin{corollary}\label{cor:mainlimit}
For $\ell\ge0$,
$$
    \sum_{\la\in \R^{\ge 1-\ell}} t^{d(\la)}q^{|\la|}
    =\sum_{i\ge0} t^i q^{i^2}\left(\frac{1}{(q)_i^2}-\frac{q^{\ell}}{(q)_{i-1}(q)_{i+1}}\right)
    =\sum_{i\ge0} t^i q^{i^2}\frac{(1-q^{i+1})-q^{\ell}(1-q^i)}{(q)_{i}(q)_{i+1}}.
$$
\end{corollary}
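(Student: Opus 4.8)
The plan is to obtain Corollary~\ref{cor:mainlimit} by taking the limit $m\to\infty$ in Corollary~\ref{cor:main-limit}, exactly as the text preceding the statement indicates. First I would recall that $\lim_{m\to\infty}\qbin{m}{i}=\frac{1}{(q)_i}$ as a formal power series in $q$, and likewise $\lim_{m\to\infty}\qbin{m-\ell+1}{i+1}=\frac{1}{(q)_{i+1}}$ for fixed $\ell$ and $i$. Applying these termwise to the right-hand side of Corollary~\ref{cor:main-limit} turns the summand $\frac{t^iq^{i^2}}{(q)_i}\left(\qbin{m}{i}-q^\ell(1-q^i)\qbin{m-\ell+1}{i+1}\right)$ into $\frac{t^iq^{i^2}}{(q)_i}\left(\frac{1}{(q)_i}-\frac{q^\ell(1-q^i)}{(q)_{i+1}}\right)=t^iq^{i^2}\left(\frac{1}{(q)_i^2}-\frac{q^\ell}{(q)_{i-1}(q)_{i+1}}\right)$, using $(1-q^i)/(q)_i=1/(q)_{i-1}$; this is the first displayed equality. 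The second equality is a routine algebraic simplification: put both terms over the common denominator $(q)_i(q)_{i+1}$, noting $(q)_i^2\,(q)_{i+1}/((q)_i(q)_{i+1})=(q)_i$ and $(q)_{i-1}(q)_{i+1}=(q)_i(q)_{i+1}/(1-q^i)$, which yields numerator $(1-q^{i+1})-q^\ell(1-q^i)$.

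One bookkeeping point worth addressing: Corollary~\ref{cor:main-limit} is stated with the sum starting at $i\ge1$, whereas Corollary~\ref{cor:mainlimit} has $i\ge0$. This is harmless because the $i=0$ term of the target sum is $t^0q^0\big(\tfrac{1}{(q)_0^2}-\tfrac{q^\ell}{(q)_{-1}(q)_1}\big)$, and with the standard convention $1/(q)_{-1}=0$ this equals $1$; one can check this matches the constant term correctly (the generating function $\sum_{\la\in\R^{\ge1-\ell}}t^{d(\la)}q^{|\la|}$ has constant term $1$, coming from the empty partition). Alternatively, one rewrites the $i\ge1$ sum in Corollary~\ref{cor:main-limit} by separating this contribution; either way the indexing is reconciled with a sentence.

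The only genuine subtlety — and the step I expect to require the most care — is justifying that the limit $m\to\infty$ can be taken termwise inside the infinite sum over $i$, i.e.\ that $\lim_{m\to\infty}\sum_{i\ge0}(\cdots)=\sum_{i\ge0}\lim_{m\to\infty}(\cdots)$ in the ring of formal power series $\mathbb{Z}[[q]][t]$. This follows from the observation that for each fixed power $t^i$, the coefficient stabilizes: the $i$-th summand contributes to $q^N$ only for $N\ge i^2$, and for fixed $i$ the quantities $\qbin{m}{i}$ and $\qbin{m-\ell+1}{i+1}$ agree with their limits modulo $q^{m-i+1}$ (respectively a similar bound), so the coefficient of each monomial $t^iq^N$ is eventually constant in $m$. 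This is the same convergence argument already used implicitly when passing from Corollary~\ref{cor:lopsided} to Corollary~\ref{cor:lopsidedlimit} and from Theorem~\ref{thm:central_t1} to its corollaries, so in the write-up I would simply invoke it briefly rather than belabor it. After that, the proof is a two-line substitution followed by the elementary manipulation of $q$-Pochhammer symbols described above.
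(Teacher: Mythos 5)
Your proof is correct and follows essentially the same route as the paper, which obtains this corollary by letting first $n\to\infty$ and then $m\to\infty$ in Theorem~\ref{thm:central-simplified}; passing through Corollary~\ref{cor:main-limit} is exactly that two-step limit, and your $q$-Pochhammer manipulations check out. Your handling of the termwise limit and of the $i\ge1$ versus $i\ge0$ indexing is also fine (if anything, the lower bound $i\ge1$ in Corollary~\ref{cor:main-limit} appears to be a slip in the paper, since its $i=0$ summand equals $1$ and accounts for the empty partition).
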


\section{The bijection $\bij$ in terms of lattice paths}
\label{sec:theta}

In~\cite{GK}, Greene and Kleitman define a mapping in order to construct a symmetric chain decomposition of the Boolean lattice $B_n$, consisting of the subsets of $\{1, 2, \ldots, n\}$ ordered by inclusion.  Such a subset can be represented by a word $w_1w_2 \ldots w_n$ over $\{1,2\}$ where $w_i = 2$ if $i$ is in the subset, and $w_i=1$ otherwise.

To assist in defining this mapping, which we denote by $\gamma$, let $w=w_1w_2 \ldots w_n\in B_n$, regard each 1 as a left parenthesis and each 2 as a right parenthesis, and match parentheses in $w$ in the usual way as follows. Every 1 followed immediately by a 2 in $w$ form a matched pair, remove matched pairs from $w$ and repeat. The procedure stops when all remaining 1's are to the right of all remaining 2's so that no further matching occurs.  For example, the word $22112122111$ has matched pairs at positions $(4,5)$, $(6,7)$, and $(3,8)$, with unmatched 2's at positions 1,2 and unmatched 1's at positions $9, 10, 11$.

For $w \in B_n$ with at least one unmatched 2, the mapping $\gamma$ changes the rightmost unmatched 2 to a 1.  Note that $\gamma(w)$ has the same set of matched pairs as $w$.  An inverse can be defined on any $w \in B_n$ with at least one unmatched 1:  $\gamma^{-1}$ changes the leftmost unmatched 1 to a 2.  With this construction, Greene and Kleitman showed that the chains formed by starting from a word $w \in B_n$ with no  unmatched 1 and iterating $\gamma$ until there is no unmatched 2 give a symmetric chain decomposition of $B_n$.

In this section we show that, when the partition bijections from Section~\ref{sec:ranks_infinite_interval-central} are translated into lattice path bijections via Foata's correspondence, they have a simple description in terms of $\gamma$.

\subsection{The Greene--Kleitman mapping for lattice paths}\label{sec:GK}

Let us first interpret $\gamma$ in terms of lattice paths.
To be consistent with the parameters for partitions, in this section we use $n$ and $m$, in this order, to denote the number of up and down steps of a path. 

As in Section~\ref{sec:lattice_paths}, we view words in $\W_{n,m}$, consisting of $n$ 1's and $m$ 2's, as lattice paths in $\G_{n,m}$, which start at the origin and have $n$ steps $U=(1,1)$ and $m$ steps $D=(1,-1)$.  For $P \in \G_{n,m}$, let $\min(P)$ denote the minimum $y$-coordinate of any point of $P$. A point on $y=\min(P)$ is called a {\em minimum} of $P$.

Given $P=p_1p_2 \ldots p_{n+m} \in \G_{n,m}$, in order to define $\gamma(P)$, first match the $U$ and $D$ steps the same way as the 1's and 2's in the corresponding word in $\W_{n,m}$.  Any unmatched $D$s must come before any unmatched $U$s.

Removing a matched $UD$ pair from $P$ does not change $\min(P)$.
So if $p_{i_j}$ is the $j$-th unmatched $D$ of $P$, then this step goes from $(i_j-1,-j+1)$ to $(i_j,-j)$.  And since the portion of $P$ between $p_{i_{j-1}}$ and $p_{i_j}$ is matched, it does not go below $y=-j+1$.  It follows that the rightmost unmatched $D$ of $P$ (if any) is the one ending at the leftmost minimum of $P$.  Similarly, the leftmost unmatched $U$ of $P$ (if any) is the one starting at the rightmost minimum of $P$. Also, the number of unmatched  $D$ steps in $P$ is $-\min(P)$ and the number of unmatched $U$ steps is $n-m-\min(P)$.  Summarizing, we can give an equivalent description of $\gamma$ and $\gamma^{-1}$ in terms of lattice paths as follows.

\begin{definition}\label{def:gamma}
Let $m,n\ge0$ and $P \in \G_{n,m}$. 

If $\min(P) < 0$,
consider the leftmost minimum of $P$. Turn the $D$ step ending at this point into a $U$ step. Let $\gamma(P)$ be the resulting path. 

If $\min(P)<n-m$,
consider the rightmost minimum of $P$. Turn the $U$ step starting at this point into a $D$ step. Let $\gamma^{-1}(P)$ be the resulting path. 
\end{definition}

\begin{example}
In Figure~\ref{fig:GK-example}, the path $P=DDUUDUDDUUU \in \G_{6,5}$ has $2=-\min(P)$ unmatched D steps (at positions 1,2) and $3=6-5-\min(P)$ unmatched U steps (at positions $9, 10, 11$).  The path on the right in the figure  is $\gamma(P) = DUUUDUDDUUU$.
\end{example}

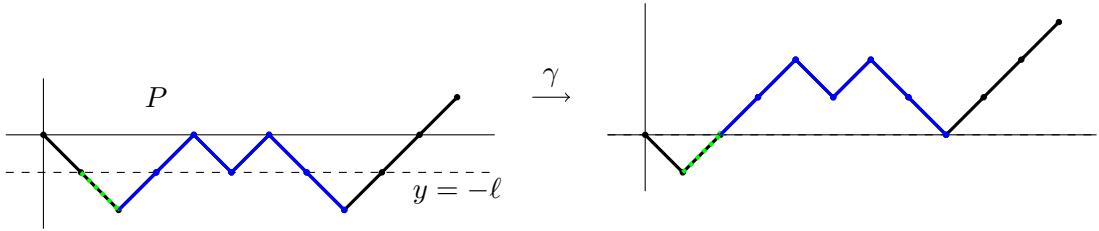
\begin{figure}[h]
    \centering
    \begin{tikzpicture}[scale=.5]
       \draw (-1,0)--(12,0);
       \draw (0,-2.5)--(0,1.5);
       \draw[dashed] (-1,-1)--(12,-1);
     \draw[very thick](0,0) circle(1.2pt)\down\down\up\up\down\up\down\down\up\up\up; 
     \draw[green,ultra thick,dotted] (1,-1) \down;
     \draw[blue,very thick] (2,-2) \up\up\down\up\down\down;
     \draw (3,1) node {$P$};
      \draw[->] (13,1)-- node[above] {$\gamma$} (14,1);
      \draw (11,-1.5) node {$y=-\ell$};
\begin{scope}[shift={(16,0)}]
       \draw (-1,0)--(12,0);
       \draw (0,-1.5)--(0,3.5);
       \draw[dashed] (-1,0)--(12,0);
     \draw[very thick](0,0) circle(1.2pt)\down\up\up\up\down\up\down\down\up\up\up; 
     \draw[green,ultra thick,dotted] (1,-1) \up;
     \draw[blue,very thick] (2,0) \up\up\down\up\down\down;
\end{scope}
    \end{tikzpicture}
    \caption{An example of the map $\gamma:\B^{-1}_{6,5}\to\B^{0}_{7,4}$ from Definition~\ref{def:gamma}. The matched steps are colored in blue, and the rightmost unmatched $D$ step in $P$, which becomes a $U$ step in $\gamma(P)$, is dotted in green. }
    \label{fig:GK-example}
\end{figure}

We now restrict $\gamma$ to  $\B^{-\ell}_{n,m}$,
the set of all  $P \in \G_{n,m}$
with $\min(P) < -\ell$.
The following lemma generalizes the lemma in~\cite[p.\ 255]{FH}. Note that the condition $n+\ell\ge m$ simply states that the right endpoint of paths in $\B^{-\ell}_{n,m}$ is on or above the line $y=-\ell$.

\begin{lemma}\label{lem:GK}
Let $m,n,\ell\ge0$. 
If $n+\ell\ge m-1$, then the map $\gamma:\B^{-\ell}_{n,m}\to\B^{-\ell+1}_{n+1,m-1}$ from Definition~\ref{def:gamma} is a bijection.
Additionally, if $n+\ell\ge m$, then for all $P\in\B^{-\ell}_{n,m}$ we have
\begin{align*} \maj(\gamma(P))&= \maj(P)-1;\\
\des(\gamma(P)) & = \begin{cases} \des(P)-1, & \text{if $\ell=0$, and $P$ begins with a $D$ and $\min(P)\ge-1$};\\
\des(P), & \text{otherwise.} \end{cases} 
\end{align*}
\end{lemma}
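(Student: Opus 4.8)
The plan is to prove the three assertions of Lemma~\ref{lem:GK} in order: first that $\gamma$ maps $\B^{-\ell}_{n,m}$ into $\B^{-\ell+1}_{n+1,m-1}$ bijectively, then the $\maj$ identity, then the $\des$ identity. For the bijection, I would use the analysis already done in the paragraph preceding Definition~\ref{def:gamma}: a path $P\in\G_{n,m}$ has exactly $-\min(P)$ unmatched $D$ steps and $n-m-\min(P)$ unmatched $U$ steps, the rightmost unmatched $D$ ends at the leftmost minimum, and the leftmost unmatched $U$ starts at the rightmost minimum. So $P\in\B^{-\ell}_{n,m}$ exactly means $\min(P)\le-\ell-1$, i.e.\ $P$ has at least $\ell+1$ unmatched $D$ steps. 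Turning the rightmost one into a $U$ step produces a path $\gamma(P)$ with $n+1$ up steps, $m-1$ down steps, with the same matched pairs, and with $\min(\gamma(P))=\min(P)+1\le-\ell$ (one needs $n+\ell\ge m-1$ so that the new endpoint height $n+1-(m-1)=n-m+2\ge-\ell+1$ is consistent, but more to the point so that the codomain $\B^{-\ell+1}_{n+1,m-1}$ is defined by a genuine strict-below condition rather than being empty). Then $\gamma^{-1}$ as in Definition~\ref{def:gamma} is the two-sided inverse: from a path in $\B^{-\ell+1}_{n+1,m-1}$, which has $\min<-\ell+1\le n+1-(m-1)=n-m+2$, there is at least one unmatched $U$ step, and turning the leftmost one back into a $D$ step undoes $\gamma$; the matched-pair structure is unchanged throughout, so the two operations are mutually inverse on the nose.

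For the $\maj$ identity I would argue geometrically. Let $p_i$ be the step of $P$ that $\gamma$ changes from $D$ to $U$; it ends at the leftmost minimum of $P$, at $x$-coordinate, say, $k$. I want to show $\maj(\gamma(P))=\maj(P)-1$, where $\maj$ of a path is the sum of the $x$-coordinates of its valleys (positions of $DU$ factors). Changing step $k$ from $D$ to $U$ can only affect whether positions $k-1$ and $k$ are valleys. Since the point at $x=k$ is the \emph{leftmost} minimum, the step $p_{k+1}$ (if it exists) must be a $U$ — otherwise $x=k+1$ would have strictly smaller height. So in $P$, position $k$ is a valley (step $k$ is $D$, step $k+1$ is $U$); in $\gamma(P)$ step $k$ is now $U$, so position $k$ is no longer a valley. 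I also need: position $k-1$ is not a valley of $P$ (step $k$ is $D$, so $DU$ at position $k-1$ is impossible), and position $k-1$ is not a valley of $\gamma(P)$ either — this needs a short case check on whether step $k-1$ is $U$ or $D$, using again that $x=k$ is the leftmost minimum, which forces step $k-1$ to be a $D$ (if it were $U$ then $x=k-1$ would be a minimum too and to the left). Hence in $\gamma(P)$, step $k-1$ is $D$ and step $k$ is $U$, so position $k-1$ \emph{is} a valley of $\gamma(P)$. Net effect: one valley at position $k$ of $P$ replaced by one valley at position $k-1$ of $\gamma(P)$, all other valleys unchanged, so $\maj$ decreases by exactly $1$. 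If $p_{k+1}$ does not exist, i.e.\ $k=n+m$, then $P$ ends at its minimum; but the hypothesis $n+\ell\ge m$ forces the endpoint to have height $\ge -\ell>\min(P)$, contradiction, so this edge case cannot arise — this is exactly where the second hypothesis $n+\ell\ge m$ (rather than just $n+\ell\ge m-1$) is used.

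For the $\des$ identity, I continue the same local analysis and count how $\des$ (the number of valleys) changes. From the argument above, $\gamma$ always destroys the valley at position $k$ and, in the generic situation, creates a valley at position $k-1$, so $\des$ is unchanged. The exception is when no valley is created at position $k-1$, which happens precisely when position $k-1$ does not exist or when, after changing step $k$ to $U$, the step $k-1$ followed by step $k=U$ does not form a $DU$. Position $k-1$ fails to exist iff $k=1$, i.e.\ the changed step is the very first step of $P$, which means $P$ begins with a $D$ step and that first step ends at the leftmost minimum; the latter says $\min(P)=-1$, and together with $P$ needing at least $\ell+1$ unmatched $D$'s this forces $\ell=0$. (If $k\ge2$ then, as shown, step $k-1$ is $D$ and a valley is created, so $\des$ is preserved.) This gives exactly the stated case split: $\des(\gamma(P))=\des(P)-1$ iff $\ell=0$, $P$ begins with $D$, and $\min(P)\ge-1$ (equivalently $\min(P)=-1$, since $P\in\B^{-\ell}_{n,m}=\B^0_{n,m}$ already forces $\min(P)\le-1$); otherwise $\des(\gamma(P))=\des(P)$. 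I expect the main obstacle to be the bookkeeping in the $k=1$ boundary case and making sure the stated condition ``$P$ begins with a $D$ and $\min(P)\ge-1$'' is exactly equivalent to ``the $D$-to-$U$ change happens at the first step,'' including checking that when $\min(P)\ge -1$ and $P$ begins with $D$, that first step really is the one $\gamma$ touches (it is, since the leftmost minimum is then at $x=1$); this is the one place where a careless reader might misidentify the special case.
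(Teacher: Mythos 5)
Your proposal is correct and follows essentially the same route as the paper's proof: the same unmatched-step count (at least one unmatched $U$ in the codomain when $n+\ell\ge m-1$) establishes bijectivity, and the same local analysis shows that the valley at the leftmost minimum moves one position to the left (or disappears when it sits at $x=1$), which gives both the $\maj$ and $\des$ statements, including the identification of the exceptional case with $\ell=0$, $P$ starting with $D$, and $\min(P)=-1$. Your write-up is simply a more explicit, step-by-step version of the paper's argument.
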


\begin{proof}
For $P\in\B^{-\ell}_{n,m}$, it is clear that $\gamma(P)\in\B^{-\ell+1}_{n+1,m-1}$, since 
$\min(\gamma(P)) = \min(P)+1$.  
To see that $\gamma$ is a bijection, note that the condition $n+\ell \geq m-1$ guarantees that any $P'\in \B^{-\ell+1}_{n+1,m-1}$ has an unmatched $U$, since
$\min(P') \leq -\ell$ and the number of unmatched $U$ steps is 
$$n+1-(m-1) -\min(P') \geq n-m+2+\ell \geq 1.$$
Thus, we can uniquely recover $\gamma^{-1}(P')$ by considering the rightmost minimum of $P'$ and turning the $U$ step starting at this point into a $D$ step.

To show that $\maj(\gamma(P))=\maj(P)-1$, first we note that the condition $n+\ell\ge m$ guarantees that the right endpoint of $P\in\B^{-\ell}_{n,m}$ is at height $n-m\ge-\ell > \min(P)$. Thus, the minima of $P$ are valleys.
By construction, $\gamma$ preserves the positions ($x$-coordinates) of all the valleys of $P$, except for the valley at the leftmost minimum, which is moved one position to the left if it occurs at $x>1$, or which disappears if $x=1$. In both cases, $\gamma$ decreases the major index by one.

Finally, for the statement about $\des$, we observe that this statistic is preserved by $\gamma$ unless the step changed by $\gamma$ is the first step of $P$, in which case the number of valleys decreases by one. This happens precisely when $P$ starts with a $D$ and the point at the end of this step is a minimum.
\end{proof}

Repeated applications of Lemma~\ref{lem:GK} give the following result.

\begin{lemma}\label{lem:GKrepeat}
Let $m,n,\ell\ge0$ such that $n+\ell\ge m$. 
\begin{enumerate}[(a)]
    \item The map $\gamma^{\ell+1}:\B^{-\ell}_{n,m}\to\G_{n+\ell+1,m-\ell-1}$ is a bijection such that, for all $P\in\B^{-\ell}_{n,m}$,
\begin{align}\nonumber \maj(\gamma^{\ell+1}(P))&= \maj(P)-\ell-1;\\
\label{eq:desGK}
    \des(\gamma^{\ell+1}(P))&=\begin{cases} \des(P)-1, & \text {if $P$ begins with a $D$ and $\min(P)\ge-\ell-1$};\\
    \des(P), & \text{otherwise}.
\end{cases}
\end{align}
\item The map $\gamma^{\ell}:\B^{-\ell}_{n,m}\to\B^0_{n+\ell,m-\ell}$ is a bijection such that, for all $P\in\B^{-\ell}_{n,m}$,
$$\maj(\gamma^{\ell}(P))= \maj(P)-\ell \qquad \text{and} \qquad \des(\gamma^{\ell}(P))=\des(P).$$
\end{enumerate}
\end{lemma}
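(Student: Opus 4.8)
The plan is to iterate Lemma~\ref{lem:GK}, keeping careful track of how the parameters and the exceptional case for $\des$ evolve.

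\textbf{Part (b).} Starting from $P\in\B^{-\ell}_{n,m}$, I would apply $\gamma$ a total of $\ell$ times. The $k$-th application ($1\le k\le\ell$) is an instance of Lemma~\ref{lem:GK} with parameters $(n_k,m_k,\ell_k)=(n+k-1,\,m-k+1,\,\ell-k+1)$, sending $\B^{-\ell_k}_{n_k,m_k}$ to $\B^{-\ell_k+1}_{n_k+1,m_k-1}=\B^{-\ell_{k+1}}_{n_{k+1},m_{k+1}}$. The point is that $n_k+\ell_k=n+\ell$ is constant while $m_k=m-k+1\le m$, so the hypothesis $n_k+\ell_k\ge m_k$ (hence also $\ge m_k-1$) holds at every step thanks to $n+\ell\ge m$; thus each $\gamma$ is a bijection, and the $\maj$ and $\des$ formulas of Lemma~\ref{lem:GK} apply. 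Moreover $\ell_k=\ell-k+1\ge1$ for all $k\le\ell$, so the exceptional branch of the $\des$-formula, which requires $\ell_k=0$, never occurs. Composing the $\ell$ maps gives that $\gamma^\ell:\B^{-\ell}_{n,m}\to\B^0_{n+\ell,m-\ell}$ is a bijection with $\maj(\gamma^\ell(P))=\maj(P)-\ell$ and $\des(\gamma^\ell(P))=\des(P)$. When $\ell=0$ this is the empty composition, i.e.\ the identity, and the claim is trivial.

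\textbf{Part (a).} I would combine part (b) with one additional application of $\gamma$. By part (b), $\gamma^\ell(P)\in\B^0_{n+\ell,m-\ell}$; applying $\gamma$ once more is Lemma~\ref{lem:GK} with $\ell=0$ and $(n'',m'')=(n+\ell,m-\ell)$, whose hypothesis $n''\ge m''$ follows from $n+\ell\ge m\ge m-\ell$. Since $\B^1_{a,b}=\G_{a,b}$, this lands in $\G_{n+\ell+1,m-\ell-1}$ and drops $\maj$ by one more, giving $\maj(\gamma^{\ell+1}(P))=\maj(P)-\ell-1$. For $\des$, the last application decreases $\des$ by one exactly when $\gamma^\ell(P)$ begins with a $D$ step and $\min(\gamma^\ell(P))\ge-1$, and preserves $\des$ otherwise. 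To express this in terms of $P$, I would show that the first step of the path is unchanged during the first $\ell$ applications: by Definition~\ref{def:gamma} and the proof of Lemma~\ref{lem:GK}, $\gamma$ alters the first step only when the path begins with $D$ and its minimum equals $-1$, which is impossible at step $k\le\ell$ since there the minimum is $<-\ell_k\le-1$. Hence $\gamma^\ell(P)$ begins with $D$ iff $P$ does; and since each $\gamma$ raises the minimum by exactly $1$, we have $\min(\gamma^\ell(P))=\min(P)+\ell$, so $\min(\gamma^\ell(P))\ge-1$ iff $\min(P)\ge-\ell-1$. As $\des$ is also unchanged over the first $\ell$ steps, this yields formula~\eqref{eq:desGK}.

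I expect the only delicate point to be the bookkeeping in part (a): verifying that ``begins with $D$'' is invariant under the first $\ell$ iterations and that $\min$ shifts by exactly one each time, so that the lone exceptional case of Lemma~\ref{lem:GK}, which occurs only in the final application, translates cleanly into the stated condition on $P$. Everything else is a routine composition of the per-step estimates, using that the slack $n_k+\ell_k-m_k$ is nondecreasing along the iteration.
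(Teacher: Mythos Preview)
Your proposal is correct and follows essentially the same approach as the paper: iterate Lemma~\ref{lem:GK} and track the parameters. The only minor difference is in the argument for the $\des$ formula in part~(a): the paper describes $\gamma^{\ell+1}$ globally as ``turn the $\ell+1$ rightmost unmatched $D$ steps into $U$ steps'' and reads off directly that $\des$ drops exactly when the first step is one of those $D$s, whereas you obtain the same conclusion by showing that the first step is untouched during the first $\ell$ iterations (since the minimum is then $\le -2$) and that $\min$ shifts by one each time, so the lone exceptional case of Lemma~\ref{lem:GK} at the final step translates to the stated condition on $P$.
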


\begin{proof}
Applying Lemma~\ref{lem:GK} $\ell+1$ times, we obtain a sequence of bijections
$$\B^{-\ell}_{n,m}\to\B^{-\ell+1}_{n+1,m-1}\to\dots\to\B^{0}_{n+\ell,m-\ell}\to\B^{1}_{n+\ell+1,m-\ell-1}=\G_{n+\ell+1,m-\ell-1}.$$ 
Since each application of $\gamma$ decreases the major index by one, the statements about $\maj$ follow.

Similarly, it follows from Lemma~\ref{lem:GK} that $\gamma^\ell$ preserves the number of descents. 
To see how this statistic behaves under $\gamma^{\ell+1}$, note that this map can be described in one step as follows. Given $P\in\B^{-\ell}_{n,m}$, match $U$ and $D$ steps as if they were opening and closing parentheses, and then turn the $\ell+1$ rightmost unmatched $D$ steps into a $U$ steps. The resulting path is $\gamma^{\ell+1}(P)$.

From this description, it is clear that the number of valleys of the path is preserved by $\gamma^{\ell+1}$ unless this map changes the first step of $P$ from a $D$ to a $U$, in which case the number of valleys decreases by one. This happens when $P$ starts with a $D$ and this $D$ is one of the $\ell+1$ rightmost unmatched $D$ steps; equivalently, $P$ starts with a $D$ and has minimum at height $-\ell-1$.
\end{proof}

\subsection{Relating $f$ and $\gamma$}

For a set of paths $\mathcal{H}$, it will be convenient to use the notation $\mathcal{H}(N)=\{P\in\mathcal{H}:\maj(P)=N\}$.

\begin{theorem}\label{thm:f=gamma}
Let $m,n,\ell\ge0$ be such that $n+\ell\ge m$.
The bijection $f$ from Definition~\ref{def:f} and the bijection $\gamma$ from Definition~\ref{def:gamma} are related by Foata's bijection $\phi^{-1}$ in the following way: $f(\la)=\phi(\gamma(\phi^{-1}(\la')))$.  This is illustrated in the following commutative diagram, where $\conj$ denotes conjugation of partitions: 
$$
\begin{array}{ccccc}
\Rc_{m,n}^{\ge 1-\ell}(N)&\stackrel{\conj}{\longleftrightarrow}& \Rc_{n,m}^{\le \ell-1}(N)&\stackrel{\phi^{-1}}{\longrightarrow}&
\B_{n,m}^{-\ell}(N)\medskip\\
\downarrow\footnotesize{f}
&&&&\downarrow\footnotesize{\gamma}\\
\Rc_{m-1,n+1}^{\ge 2-\ell}(N-1)&\stackrel{\conj}{\longleftrightarrow}& \Rc_{n+1,m-1}^{\le \ell-2}(N-1)&\stackrel{\phi^{-1}}{\longrightarrow}&\B_{n+1,m-1}^{-\ell+1}(N-1)
\end{array}
$$
\end{theorem}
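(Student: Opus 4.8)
The plan is to verify commutativity of the diagram directly, by chasing a general partition through both paths of the diagram and showing the two resulting paths in $\B_{n+1,m-1}^{-\ell+1}(N-1)$ coincide. Since the left-hand square (conjugation followed by $\phi^{-1}$) is just the restriction of the bijection from equation~\eqref{eq:phib} (with $S=\Z_{\le\ell-1}$, so $-S-1=\Z_{\ge-\ell}$, and the complement gives the paths with some valley below $-\ell$, i.e. $\B^{-\ell}_{n,m}$), the content is entirely in showing that $\gamma\circ(\phi^{-1}\circ\conj)=(\phi^{-1}\circ\conj)\circ f$. Equivalently, writing $\mu=f(\la)$, one must show $\phi^{-1}(\mu')=\gamma(\phi^{-1}(\la'))$.

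First I would unwind both sides using Lemma~\ref{Foata_to_hooks}, which describes $\phi^{-1}$ of a partition purely in terms of the positions of its valleys: $\phi^{-1}(\nu)$ is the unique path whose $j$th valley from the left sits at $(b_j+a_j,\,b_j-a_j)$, where $(a_j,b_j)$ come from the hook decomposition $\hd(\nu)$ of Definition~\ref{def:hook_decomp}. So I need to track how the hook decomposition of $\la'$ changes when $\la$ is replaced by $\mu=f(\la)$, then conjugate, and compare with the effect of $\gamma$ on valley positions. Recall from Definition~\ref{def:f} that $f$ removes a part of size $i=i(\la)$ from $\la$ and adds a part of size $i-1$ to $\la'$; hence on the conjugate side, $f$ acting on $\la$ corresponds to removing a part of size $i-1$ from $(\la')'=\la$... more usefully: $(f(\la))'=\mu'$ is obtained from $\la'$ by removing a part of size $i-1$ from $\la$ (a column of $\la'$) and adding a part of size $i$. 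The key numerical fact to extract is that, via Lemma~\ref{lem:Foata}(d), the index $i=i(\la)$ — the \emph{largest} index achieving the minimum rank of $\la$ — corresponds under $\phi^{-1}$ to the \emph{leftmost} lowest valley of $P:=\phi^{-1}(\la')$: indeed $r_j(\la')=-1-v_{d+1-j}(P)$ (careful with the $\la$ vs.\ $\la'$ swap — ranks of $\la$ are negatives of shifted ranks of $\la'$), so the minimum rank of $\la$ translates to the minimum valley height of $P$, and "largest index" translates to "leftmost such valley". This is exactly the valley that $\gamma$ modifies (Definition~\ref{def:gamma}: $\gamma$ turns into a $U$ the $D$ step ending at the leftmost minimum — and by the discussion preceding Definition~\ref{def:gamma}, the minima of $P$ are valleys here since $n+\ell\ge m$). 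So the plan is: show that (i) $\gamma$, which shifts the leftmost-lowest valley one step left (or deletes it if at $x=1$) while fixing all other valleys, and (ii) the transformation of hook data induced by $f$, followed by conjugation, produce the same multiset of valley positions; then invoke the uniqueness clause of Lemma~\ref{Foata_to_hooks} to conclude the paths are equal. The bookkeeping splits into the generic case $i(\la)>1$ and the boundary case $i(\la)=1$, matching the two branches in Definition~\ref{def:gamma}/Lemma~\ref{lem:Foata}(e).

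The main obstacle I anticipate is the index bookkeeping around conjugation: the diagram applies $\phi^{-1}$ to $\la'$, not to $\la$, so one must carefully relate $\hd(\la')$ to the data $(\la_j,\la'_j)$ used to define $f$, and track how removing a part of size $i$ from $\la$ (equivalently, shortening a column of $\la'$) and adding a part of size $i-1$ to $\la'$ reshuffles the hook decomposition of $\la'$. A clean way to sidestep the heaviest computation is to instead verify directly that $\phi$ intertwines $f$ and $\gamma$ at the word level: by equation~\eqref{foata_unwound}, $\phi^{-1}$ has an explicit formula on words $1^{m_0}2^{n_0}\cdots 1^{m_d}2^{n_d}$, and $f$ translated to the boundary word of $\la$ (via equation~\eqref{eq:boundary_word}) is a concrete local edit; matching this against Definition~\ref{def:gamma}'s local edit on $\phi^{-1}$ of that word reduces the whole theorem to a finite case check on the shape of the word near the affected valley. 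Once commutativity of the right square is established, the statement that the vertical maps have the claimed domains and codomains (and in particular that $f$ corresponds to $\gamma:\B^{-\ell}_{n,m}\to\B^{-\ell+1}_{n+1,m-1}$) follows from Lemma~\ref{lem:f}(a) together with Lemma~\ref{lem:GK}, and the area/Durfee compatibilities are already recorded in Lemma~\ref{f_lemma} and Lemma~\ref{lem:GK}, so no further work is needed there.
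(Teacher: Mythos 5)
Your plan is, in outline, the paper's own proof: reduce to the identity $\phi^{-1}(\mu')=\gamma(\phi^{-1}(\la'))$ for $\mu=f(\la)$, describe both paths through Lemma~\ref{Foata_to_hooks} by the positions of their valleys, use Lemma~\ref{lem:Foata}(d) to identify the index $i(\la)$ (largest index achieving the minimum rank) with the leftmost minimum of $P=\phi^{-1}(\la')$, compute the effect of $f$ on the hook data of $\la'$, and match it against $\gamma$'s local move, invoking uniqueness of the path with prescribed valleys. The left square and the identification $\V^{\ge-\ell}_{n,m}=\A^{-\ell}_{n,m}$ under $n+\ell\ge m$ are handled exactly as you say.

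There is one concrete error, and it sits precisely in the index bookkeeping you flag as the main hazard: the exceptional case is \emph{not} $i(\la)=1$. Because $r_j(\la)$ corresponds to the valley $V_{d+1-j}$, the \emph{largest} rank index corresponds to the \emph{leftmost} valley; the degenerate branch of $\gamma$ (the modified $D$ is the first step of $P$, so the valley at $(1,-1)$ disappears) occurs exactly when the leftmost minimum is $V_1=(b_1+a_1,b_1-a_1)=(1,-1)$, which forces $d+1-i=1$, i.e.\ $i(\la)=d(\la)$, together with $a_1=1$, $b_1=0$, i.e.\ $\tau(\la)=0$ and $\la_d=\la'_d=d>\la_{d+1}$. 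This is also the unique case in which $f$ shrinks the Durfee square (see the proof of Lemma~\ref{f_lemma}), and it can occur with $i(\la)$ arbitrarily large (e.g.\ $\la=(3,2)$ has $i=d=2$). Conversely, $i(\la)=1$ is a boundary case only for whether $\la_1$ grows (Lemma~\ref{f_lemma}(c)) and is covered by the generic valley computation. So if you run your case split as written, the ``generic'' branch will silently contain the configurations where a valley is destroyed and the valley count drops, and the computation there (each affected valley moving to $(b_j+a_j-1,\,b_j-a_j+2)$, with the touched valley moving to $(b+a-1,\,b-a+1)$) will not go through. Replace the dichotomy by: generic case versus $\tau(\la)=0$ with $i(\la)=d=\la_d>\la_{d+1}$, and the rest of your argument closes correctly.
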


\begin{proof}
Let us first check that all the maps in the diagram are bijections. This is clear for the conjugation map on partitions, and for the maps $f$ and $\gamma$ by Lemmas~\ref{lem:f} and~\ref{lem:GK}. 
By equation~\eqref{eq:phib}, the map $\phi^{-1}$ restricts to a bijection between 
$\R_{n,m}^{\le \ell-1}$ and $\V^{\ge-\ell}_{n,m}$.
The condition $n+\ell\ge m$ guarantees that paths in the latter set end at height $n-m\ge -\ell$, and thus not only their valleys but also the entire paths lie on or above the line $y=-\ell$. It follows that $\V^{\ge-\ell}_{n,m}=\A^{-\ell}_{n,m}$, and so $\phi^{-1}$ restricts to a bijection between $\Rc_{n,m}^{\le \ell-1}$ and 
$\B^{-\ell}_{n,m}$. Similarly, $n+\ell\geq m$ guarantees that $(n+1)+(\ell-1) \geq m-1$, so
$\V^{\ge-\ell+1}_{n+1,m-1}=\A^{-\ell+1}_{n+1,m-1}$, and  $\phi^{-1}$ restricts to a bijection between $\Rc_{n+1,m-1}^{\le \ell-2}$ and 
$\B^{-\ell+1}_{n+1,m-1}$.
Finally, by Lemma~\ref{lem:Foata}, $\phi^{-1}$ sends the area of the partition to the major index of the path.

Next we show that $f(\la)=\phi(\gamma(\phi^{-1}(\la'))')$.
 Let $\la \in \Rc_{m,n}^{\ge 1-\ell}(N)$ and $d=d(\lambda)$.  
 Consider the hook decomposition of its conjugate $\la'$, given Definition~\ref{def:hook_decomp} but with $\la'$ playing the role of $\la$.
 (See Figure \ref{fig:comparison}, where the $a_i$ and $b_i$ of $\la'$ are shown in blue and red, respectively.)
 
\begin{figure}[h!]
\centering
    \begin{tikzpicture}[scale=0.55]
\begin{scope}[shift={(0,0)}] 
    \fill[yellow] (0,0)--(1,0)--(1,2)--(3,2)--(3,4)--(4,4)--(4,6)--(0,6)--(0,0);
    \fill[orange!40] (0,4) rectangle (3,6);
      \draw[gray,thin] (0,0) grid (5,6);
      \draw[very thick] (0,0) \east\north\north\east\east\north\north\east\north\north\east;
     \draw[dotted] (.5,.5)--(.5,5.5)--(3.5,5.5);
    \draw[dotted] (1.5,2.5)--(1.5,4.5)--(3.5,4.5);
    \draw[dotted] (2.5,2.5)--(2.5,3.5);
    \draw[red] (.5,5.5) node {$-2$};
    \draw (1.5,4.5) node {$0$};    
    \draw (2.5,3.5) node {$-1$};
    \fill[pattern=north east lines,pattern color=red] (0,0) rectangle (1,1);  
      \draw (6,3) node [label=$\conj$] {$\longleftrightarrow$};
      \draw (2.5,6.7) node {$\la$};
    \draw[->] (2.5,-.5)-- node[right] {$f$} (2.5,-1.5);
    \draw (1,-1) node {$i=1$};
\end{scope}
\begin{scope}[shift={(7,.5)}] 
    \fill[blue!30] (0,4) rectangle (6,5);
    \fill[blue!30] (1,3) rectangle (4,4);
    \fill[blue!30] (2,2) rectangle (4,3);
    \fill[red!30] (0,4) rectangle (1,1);
    \fill[red!30] (1,3) rectangle (2,1);
      \draw[gray,thin] (0,0) grid (6,5);
      \draw[very thick] (0,0) \north\east\east\north\east\east\north\north\east\east\north;
      \draw[red] (.5,1) node [below] {$3$};
      \draw[red] (1.5,1) node [below] {$2$};
      \draw[red] (2.5,2) node [below] {$0$};
      \draw[blue] (4,2.5) node[right] {$2$};
      \draw[blue] (4,3.5) node[right] {$3$};
      \draw[blue] (6,4.5) node[right] {$6$};
      \draw (3,5.7) node {$\la'$};
      \draw (7,2.5) node [label=$\phi^{-1}$] {$\longrightarrow$};
\end{scope}
\begin{scope}[shift={(0,-7)}] 
 \fill[yellow] (0,0)--(1,0)--(1,1)--(3,1)--(3,3)--(4,3)--(4,5)--(0,5)--(0,0);
 \fill[orange!40] (0,3) rectangle (3,5);
     \draw[gray,thin] (0,0) grid (6,5);
   \draw[very thick] (0,0) \east\north\east\east\north\north\east\north\north\east\east;
    \draw[dotted] (.5,.5)--(.5,4.5)--(3.5,4.5);
    \draw[dotted] (1.5,1.5)--(1.5,3.5)--(3.5,3.5);
    \draw[dotted] (2.5,1.5)--(2.5,2.5);    
    \draw (.5,4.5) node {$-1$};
    \draw (1.5,3.5) node {$0$};    
    \draw[red] (2.5,2.5) node {$-1$};
    \fill[pattern=north east lines,pattern color=red] (0,1) rectangle (3,2);
     \draw (8,2.5) node [label=$\conj$] {$\longleftrightarrow$};
     \draw (10.5,2.5) node {$\dots$};
      \draw (13,2.5) node [label=$\phi^{-1}$] {$\longrightarrow$};
   \draw[->] (2.5,-.5)-- node[right] {$f$} (2.5,-1.5);
    \draw (1,-1) node {$i=3$};
\end{scope}
\begin{scope}[shift={(0,-13)}] 
 \fill[yellow] (0,0)--(1,0)--(1,1)--(3,1)--(3,2)--(5,2)--(5,4)--(0,4)--(0,0);
 \fill[orange!40] (0,2) rectangle (3,4);
 \draw[gray,thin] (0,0) grid (7,4);
    \draw[dotted] (.5,.5)--(.5,3.5)--(4.5,3.5);
    \draw[dotted] (1.5,1.5)--(1.5,2.5)--(4.5,2.5);
   \draw[very thick] (0,0) \east\north\east\east\north\east\east\north\north\east\east;
    \draw (.5,3.5) node {$1$};
    \draw (1.5,2.5) node {$2$};    
    \draw[red] (2.5,1.5) node {$0$};
    \fill[pattern=north east lines,pattern color=red] (0,1) rectangle (3,2);
     \draw (9,2.5) node [label=$\conj$] {$\longleftrightarrow$};
     \draw (11,2.5) node {$\dots$};
      \draw (13,2.5) node [label=$\phi^{-1}$] {$\longrightarrow$};
    \fill[pattern=north east lines,pattern color=green] (4,2) rectangle (5,4);
  \draw[->] (2.5,-.5)-- node[right] {$f$} (2.5,-1.5);
    \draw (1,-1) node {$i=3$};
\end{scope}
\begin{scope}[shift={(0,-18)}] 
   \fill[yellow] (0,0)--(1,0)--(1,1)--(6,1)--(6,3)--(0,3)--(0,0);
    \fill[orange!40] (0,1) rectangle (3,3);
  \draw[gray,thin] (0,0) grid (8,3);
      \draw[very thick] (0,0) \east\north\east\east\east\east\east\north\north\east\east;
      \draw (4,-.7) node {$\mu$};
      \draw (9,1.5) node [label=$\conj$] {$\longleftrightarrow$};
    \fill[pattern=north east lines,pattern color=green] (5,1) rectangle (6,3);
\end{scope}
\begin{scope}[shift={(10,-20)}] 
    \fill[blue!30] (0,7) rectangle (3,8);
    \fill[blue!30] (1,6) rectangle (2,7);
    \fill[red!30] (0,7) rectangle (1,2);
    \fill[red!30] (1,6) rectangle (2,2);
      \draw[gray,thin] (0,0) grid (3,8);
      \draw[very thick] (0,0) \north\north\east\east\north\north\north\north\north\east\north;
      \draw[red] (0.5,2) node [below] {$5$};
      \draw[red] (1.5,2) node [below] {$4$};
     \draw[blue] (2,6.5) node[right] {$1$};
      \draw[blue] (3,7.5) node[right] {$3$};
      \draw (-.6,.5) node {$\mu'$};
      \draw (4,4) node [label=$\phi^{-1}$] {$\longrightarrow$};
\end{scope}
\begin{scope}[shift={(16,3)},scale=.7071]
       \draw (-1,0)--(12,0);
       \draw (0,-3.5)--(0,.5);
       \draw[dashed] (-1,-2)--(12,-2);
     \draw[very thick](0,0) circle(1.2pt)\down\down\up\up\down\up\down\down\down\up\up; 
     \draw[green,very thick,dotted] (0,0) \down\down;
     \draw[green,very thick,dotted] (8,-2) \down;       
     \filldraw[purple] (2,-2) circle (.12);
      \filldraw[purple] (5,-1) circle (.12);
      \filldraw[purple] (9,-3) circle (.12);
      \draw (5,2) node {$P=\phi^{-1}(\la')$};
      \draw[->] (5.5,-5)-- node[right] {$\gamma$} (5.5,-6);
\end{scope}
\begin{scope}[shift={(16,-4)},scale=.7071]
       \draw (-1,0)--(12,0);
       \draw (0,-2.5)--(0,1.5);
       \draw[dashed] (-1,-1)--(12,-1);
     \draw[very thick](0,0) circle(1.2pt)\down\down\up\up\down\up\down\down\up\up\up; 
     \draw[green,very thick,dotted] (0,0) \down\down;
     \draw[green,very thick,dotted] (8,-2) \up;       
     \filldraw[purple] (2,-2) circle (.12);
      \filldraw[purple] (5,-1) circle (.12);
      \filldraw[purple] (8,-2) circle (.12);
      \draw[->] (5.5,-4.5)-- node[right] {$\gamma$} (5.5,-5.5);
\end{scope}
\begin{scope}[shift={(16,-11)},scale=.7071]
       \draw (-1,0)--(12,0);
       \draw (0,-1.5)--(0,3.5);
       \draw[dashed] (-1,0)--(12,0);
     \draw[very thick](0,0) circle(1.2pt)\down\up\up\up\down\up\down\down\up\up\up; 
     \draw[green,very thick,dotted] (0,0) \down\up;
     \draw[green,very thick,dotted] (8,0) \up;       
     \filldraw[purple] (1,-1) circle (.12);
      \filldraw[purple] (5,1) circle (.12);
      \filldraw[purple] (8,0) circle (.12);
      \draw[->] (5.5,-3.5)-- node[right] {$\gamma$} (5.5,-4.5);
\end{scope}
\begin{scope}[shift={(16,-18)},scale=.7071]
       \draw (-1,0)--(12,0);
       \draw (0,-0.5)--(0,5.5);
     \draw[very thick](0,0) circle(1.2pt)\up\up\up\up\down\up\down\down\up\up\up; 
      \filldraw[purple] (0,0) circle (.12);
      \filldraw[purple] (5,3) circle (.12);
      \filldraw[purple] (8,2) circle (.12);
     \draw[green,very thick,dotted] (0,0) \up\up;
     \draw[green,very thick,dotted] (8,2) \up;  
\end{scope}    
\end{tikzpicture}
  \caption{The map $f$ on partitions corresponds to the map $\gamma$ on lattice paths.
  In this example the left column agrees with
  Figure~\ref{fig:fbij_partitions}, and $\bij=f^3$ is a bijection between $\Rc_{m,n}^{\ge 1-\ell}(N)$ and $\P_{m-\ell-1,n+\ell+1}(N-\ell-1)$ where $m=6$, $n=5$, $\ell=2$, and $N=16$.
  }\label{fig:comparison}
\end{figure}
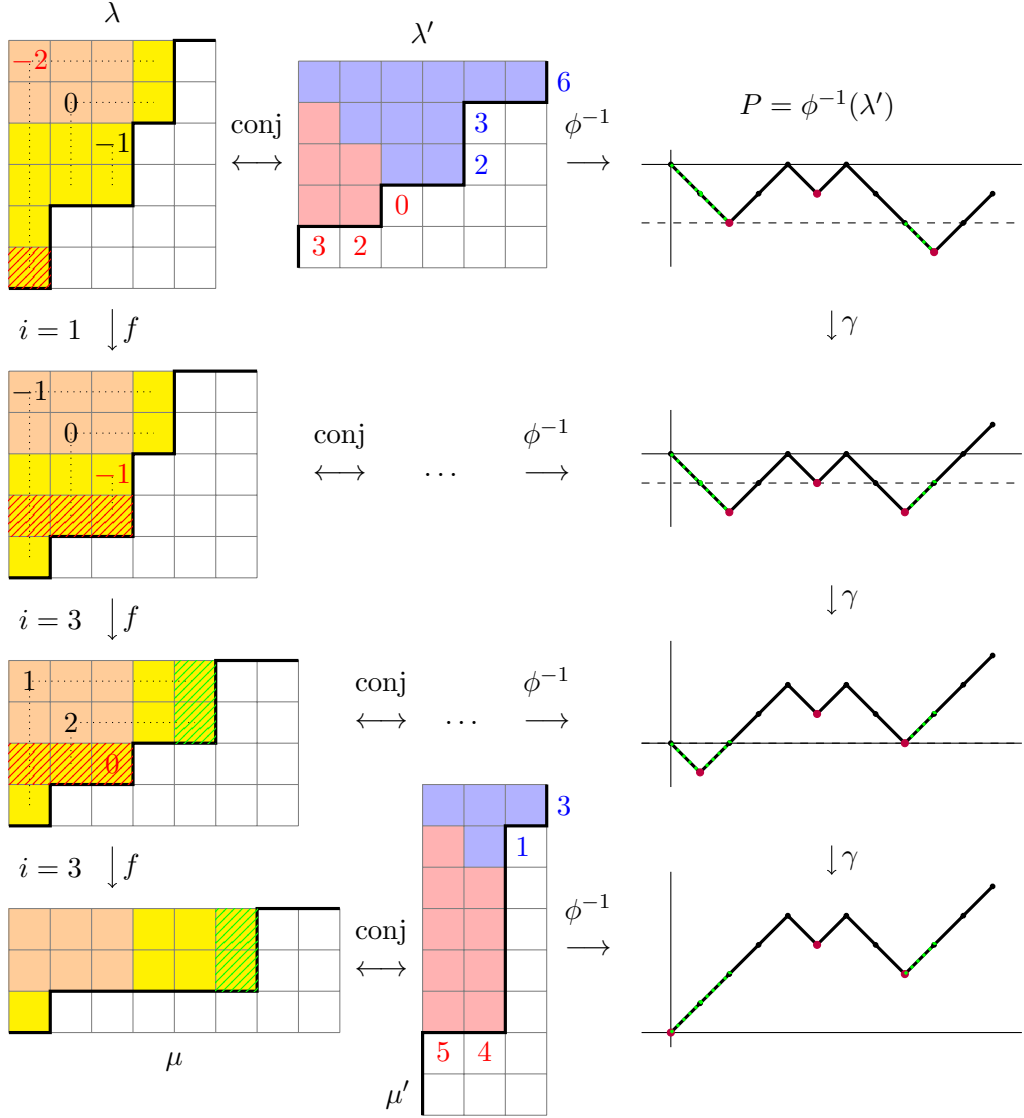

Let $P = \phi^{-1}(\la')$.  By Lemma \ref{Foata_to_hooks}, $P$ is the unique path in $\G_{m,n}$ whose $j$th valley from the left, for $1 \leq j \leq d$, is at the point
\begin{equation*}
    V_j =  (b_j+a_j,b_j-a_j).
\end{equation*}

Now let $\tau$ be the minimum rank of $\la$, and note that
$$
    \tau = \min\{b_d-a_d, b_{d-1}-a_{d-1}, \ldots, b_1-a_1\} +1.
$$
Let $i$ be the largest index such that $r_i(\la) = \tau$.  Then the valley 
$V_{d+1-i}$
is the leftmost minimum of $P$. We now show that the effect of applying $f$ to $\la$ corresponds to applying $\gamma$ to $P$.

From the proof of Lemma~\ref{f_lemma}, unless we have $\tau=0$ and $i=d=\la_d>\la_{d+1}$, the effect of applying $f$ to $\la$ is that each $\la_j$ for $1 \leq j < i$ increases by 1, and each $\la'_j$ for $1 \leq j \leq i$ decreases by 1.
Equivalently, in the hook decomposition of $\la'$, each 
$b_{d+1-j}$ for $1 \leq j < i$ increases by 1, and each
$a_{d+1-j}$ for $1 \leq j \leq i$ decreases by 1.

The changes to $\la$ and $\la'$ caused by $f$ have the following effect on the valleys of $P= \phi^{-1}(\la')$:
\begin{eqnarray*}
\text{$V_j=(b_j+a_j,b_j-a_j)$ moves to  $(b_j+a_j-1,b_j-a_j+1)$ if $j=d+1-i$;}\\
\text{$V_j=(b_j+a_j,b_j-a_j)$ moves to  $(b_j+a_j-1,b_j-a_j+2)$ if $j>d+1-i$.}
\end{eqnarray*}
This corresponds to changing the down step in $P$ leading to the valley $V_{d+1-i}$ to an up step, which is the action of $\gamma$.
This is illustrated in the first two applications of $f$ in Figure \ref{fig:comparison}. 

It remains to check the case $\tau=0$ and $i=d=\la_d>\la_{d+1}$.  
This case is illustrated in the last application of $f$ in Figure \ref{fig:comparison}. 
In this case $a_1=1$ and $b_1=0$, and thus the leftmost minimum of $\phi^{-1}(\la')$ is the valley $(1,-1)$.
The effect of applying $f$ to $\la$ is that each $\la_j$ for $1 \leq j < d-1$ increases  by 1, each $\la'_j$ for $1 \leq j \leq d$ decreases by 1,
and the side of the Durfee square decreases by 1 in both $\la$ and $\la'$.
This means that we lose the smallest hook $h_1$ in $\la'$, and the corresponding valley $V_1=(b_1+a_1,b_1-a_1)=(1,-1)$ in $P$.
As for the other valleys in $P$, $a_{d+1-j}$ decreases by 1 and $b_{d+1-j}$ increases by 1 for $1 \leq j \leq d-1$.  So, for $2 \leq j \leq d$, the valley $V_j$ moves to $V_j+(0,2)$.  This corresponds to changing the first step of $P$ (which is a $D$) to a $U$, which agrees with the action of~$\gamma$.
\end{proof}

As a consequence, we have the following alternative description of $\bij$ in terms of lattice paths.

\begin{corollary}\label{cor:bij=gammarepeat}
Let $m,n,\ell\ge0$ such that $n+\ell\ge m$. The bijection $\bij$ from Theorem~\ref{thm:bij} and the bijection $\gamma^{\ell+1}$ from Lemma~\ref{lem:GKrepeat} are related by
$$\bij(\la)=\phi(\gamma^{\ell+1}(\phi^{-1}(\la')))'$$ for $\la\in\Rc_{m,n}^{\ge 1-\ell}(N)$.
The following diagram illustrates this composition of bijections:
$$
\begin{array}{ccccc}
\Rc_{m,n}^{\ge 1-\ell}(N)&\stackrel{\conj}{\longleftrightarrow}& \Rc_{n,m}^{\le \ell-1}(N)&\stackrel{\phi^{-1}}{\longrightarrow}&
\B_{n,m}^{-\ell}(N)\medskip\\
\downarrow\footnotesize{\bij}
&&
&&\downarrow\footnotesize{\gamma^{\ell+1}}\\
\P_{m-\ell-1,n+\ell+1}(N-\ell-1)&\stackrel{\conj}{\longleftrightarrow}&\P_{n+\ell+1,m-\ell-1}(N-\ell-1)&\stackrel{\phi^{-1}}{\longrightarrow}&\G_{n+\ell+1,m-\ell-1}(N-\ell-1)\\
\end{array}
$$
\end{corollary}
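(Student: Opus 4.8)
The plan is to obtain Corollary~\ref{cor:bij=gammarepeat} by iterating Theorem~\ref{thm:f=gamma} exactly $\ell+1$ times. Recall that $\bij=f^{\ell+1}$ by Theorem~\ref{thm:bij}(a), and that Theorem~\ref{thm:f=gamma} provides the identity $f(\la)=\phi(\gamma(\phi^{-1}(\la')))'$; that is, the map $\la\mapsto\phi^{-1}(\la')$ conjugates $f$ into $\gamma$. So along a chain of $\ell+1$ applications of $f$, the intermediate occurrences of $\phi$, $\phi^{-1}$ and conjugation should cancel in consecutive pairs, leaving $\phi^{-1}(\la')\mapsto\gamma^{\ell+1}(\phi^{-1}(\la'))$. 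First I would dispose of the degenerate case: if $\Rc_{m,n}^{\ge1-\ell}(N)=\emptyset$ there is nothing to prove, and if it contains a partition $\la$, then some rank $r_i(\la)=\la_i-\la'_i\ge i-m\ge1-m$ is $\le-\ell$, which forces $m\ge\ell+1$; this keeps all the shrinking parameters appearing below nonnegative.

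Next, fix $\la\in\Rc_{m,n}^{\ge1-\ell}(N)$, set $\la^{(0)}=\la$ and $\la^{(k+1)}=f(\la^{(k)})$, and let $P^{(k)}=\phi^{-1}\bigl((\la^{(k)})'\bigr)$. Iterating Lemma~\ref{lem:f}(a) for $0\le k<\ell$ and then Lemma~\ref{lem:f}(b) for $k=\ell$ shows that $\la^{(k)}\in\Rc_{m-k,n+k}^{\ge1-(\ell-k)}(N-k)$ for $0\le k\le\ell$ and $\la^{(\ell+1)}=\bij(\la)\in\P_{m-\ell-1,n+\ell+1}(N-\ell-1)$; correspondingly, equation~\eqref{eq:phib} together with the fact that $n+\ell\ge m$ forces $\V^{\ge-(\ell-k)}_{n+k,m-k}=\A^{-(\ell-k)}_{n+k,m-k}$ (exactly as in the proof of Theorem~\ref{thm:f=gamma}) gives $P^{(k)}\in\B^{-(\ell-k)}_{n+k,m-k}(N-k)$. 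Now fix $k$ with $0\le k\le\ell$ and apply Theorem~\ref{thm:f=gamma} with $(m,n,\ell)$ replaced by $(m-k,n+k,\ell-k)$, whose hypotheses hold since $(n+k)+(\ell-k)=n+\ell\ge m\ge m-k$: this yields $\la^{(k+1)}=\phi\bigl(\gamma(P^{(k)})\bigr)'$. Taking conjugates and applying $\phi^{-1}$, and using that conjugation is an involution and $\phi^{-1}\circ\phi$ is the identity on paths,
$$P^{(k+1)}=\phi^{-1}\bigl((\la^{(k+1)})'\bigr)=\phi^{-1}\bigl(\phi(\gamma(P^{(k)}))\bigr)=\gamma(P^{(k)}).$$

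Iterating $P^{(k+1)}=\gamma(P^{(k)})$ over $k=0,1,\dots,\ell$ gives $P^{(\ell+1)}=\gamma^{\ell+1}(P^{(0)})=\gamma^{\ell+1}(\phi^{-1}(\la'))$, while by definition $P^{(\ell+1)}=\phi^{-1}\bigl((\bij(\la))'\bigr)$. Equating these, then applying $\phi$ and conjugating, gives $\bij(\la)=\phi\bigl(\gamma^{\ell+1}(\phi^{-1}(\la'))\bigr)'$, which is the asserted formula; the commutative diagram in the statement is then just a repackaging of this computation, with the four corners and the outer maps identified via Theorem~\ref{thm:bij}(a), Lemma~\ref{lem:GKrepeat}(a) and equation~\eqref{eq:phib}. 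I expect the only genuinely delicate point to be the parameter bookkeeping: verifying that at every level $k$ the rectangle dimensions and the threshold $-(\ell-k)$ line up so that Theorem~\ref{thm:f=gamma} applies and the intermediate sets of partitions and paths coincide. This is precisely what the hypothesis $n+\ell\ge m$ buys us, and once it is in place the cancellations are purely formal, requiring no re-examination of $\phi$, $f$, or $\gamma$ themselves.
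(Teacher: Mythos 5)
Your proof is correct and takes the same route the paper intends: the corollary is presented there as an immediate consequence of iterating Theorem~\ref{thm:f=gamma} $\ell+1$ times, which is exactly your computation $P^{(k+1)}=\gamma(P^{(k)})$ followed by unwinding the conjugations. Your extra parameter bookkeeping (in particular the observation that nonemptiness of $\Rc_{m,n}^{\ge 1-\ell}(N)$ forces $m\ge\ell+1$, so all intermediate rectangles are legitimate) is a detail the paper leaves implicit but is correctly supplied.
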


See the examples in Figure~\ref{fig:comparison} illustrating this equivalence.
The correspondence in Corollary~\ref{cor:bij=gammarepeat} allows us to give an alternative proof of Theorem~\ref{thm:bij} using Lemmas~\ref{lem:GKrepeat} and~\ref{lem:Foata}.

Using Foata's correspondence, the generating functions in Theorems~\ref{thm:central-simplified} and~\ref{thm:lopsided} have interpretations in terms of lattice paths.

\section{Connections to other results on lattice paths}\label{sec:lattice_paths_other}

In this section we discuss two alternative proofs of Theorem~\ref{thm:central-simplified} using techniques from the literature on lattice paths.

\subsection{Nonintersecting paths}\label{sec:nonintersecting}

Here we prove Theorem~\ref{thm:central-simplified} using 
the Lindstr\"om--Gessel--Viennot Lemma \cite{LGV}.
Let  $\lambda\in\mathcal{P}_{m,n}$ with $d(\la)=i$.
The partition $\alpha$ to the right of the Durfee square is a partition in $\mathcal{P}_{i,n-i}$
and the partition below the Durfee square is in $\mathcal{P}_{m-i,i}$, hence its conjugate $\beta$
is in $\mathcal{P}_{i,m-i}$. Then $\lambda\in\R_{m,n}^{\ge 1-\ell}$ if and only if
$\alpha_j-\beta_j\ge 1-\ell$ for all $j$. This means that if we draw the Young diagram of $\alpha$ so that its boundary is
 a path from $A_1=(0,0)$ to $B_1=(n-i,i)$, and similarly the diagram of $\beta$ 
as a path from $A_2=(\ell,-1)$ to $B_2=(m+\ell-i,i-1)$, then $\lambda\in\R_{m,n}^{\ge 1-\ell}$ if and only if these paths do not intersect.

Consider pairs of paths from $A_1$ to $B_1$ and from $A_2$ to $B_2$, with the above notation, and define their weight to be the sum of the area above the paths. The generating function of these pairs of paths, where $q$ marks the weight, is 
$$
\qbin{n}{i}\qbin{m}{i}.
$$
The generating function of pairs of paths that intersect in at least one vertex (see Figure \ref{fig:path} for an example) is $q^\ell$ times 
the generating function of pairs of paths from $A_1$ to  $B_2$ and from $A_2$ to $B_1$, that is,
$$
q^\ell \qbin{m+\ell-1}{i-1}\qbin{n-\ell+1}{i+1}.
$$

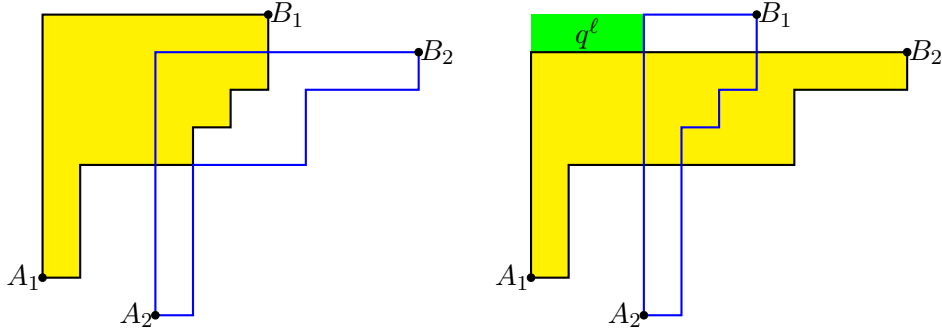
\begin{figure}[htb]
\centering
    \begin{tikzpicture}[scale=0.5]
    \fill[yellow] (0,0)--(1,0)--(1,2)--(1,3)--(4,3)--(4,4)--(5,4)--(5,5)--(6,5)--(6,7)--(0,7)--(0,0);
\draw[thick] (0,0)--(1,0)--(1,2)--(1,3)--(4,3)--(4,4)--(5,4)--(5,5)--(6,5)--(6,7)--(0,7)--(0,0);;
      \draw[blue,thick] (3,-1)--(4,-1)--(4,3)--(7,3)--(7,4)--(7,5)--(10,5)--(10,6)--(3,6)--(3,-1);
\filldraw (0,0) circle (.1);
\filldraw (6,7) circle (.1);
\draw (-0.5,0) node {$A_1$};
\draw (6.5,7) node {$B_1$};
\filldraw (3,-1) circle (.1);
\filldraw (10,6) circle (.1);
\draw (2.5,-1) node {$A_2$};
\draw (10.5,6) node {$B_2$};
    \end{tikzpicture}\hspace{.15cm}
\begin{tikzpicture}[scale=0.5]
\fill[green](0,7)--(3,7)--(3,6)--(0,6);
\fill[yellow] (0,0)--(1,0)--(1,2)--(1,3)--(4,3)--(7,3)--(7,4)--(7,5)--(10,5)--(10,6)--(0,6)--(0,0);
\draw[thick] (0,0)--(1,0)--(1,2)--(1,3)--(4,3)--(7,3)--(7,4)--(7,5)--(10,5)--(10,6)--(0,6)--(0,0);
\draw[blue,thick] (3,-1)--(4,-1)--(4,3)--(4,4)--(5,4)--(5,5)--(6,5)--(6,7)--(3,7)--(3,-1);
\filldraw (0,0) circle (.1);
\filldraw (6,7) circle (.1);
\draw (-0.5,0) node {$A_1$};
\draw (6.5,7) node {$B_1$};
\filldraw (3,-1) circle (.1);
\filldraw (10,6) circle (.1);
\draw (2.5,-1) node {$A_2$};
\draw (10.5,6) node {$B_2$};
\draw (1.5,6.5) node {$q^\ell$};
 \end{tikzpicture}
  \caption{Example of a pair of paths that intersect.}\label{fig:path}
\end{figure}

Thus, the generating function for pairs of paths that do not intersect is
$$
\qbin{n}{i}\qbin{m}{i}-q^{\ell}\qbin{m+\ell-1}{i-1}\qbin{n-\ell+1}{i+1}.
$$

Multiplying by $t^i q^{i^2}$ to account for the contribution of the Durfee square, and summing over $i$, we recover the right-hand side of Theorem~\ref{thm:central-simplified}.

\subsection{Paths between two lines}\label{sec:refined}

In terms of lattice paths, Theorem~\ref{thm:central-simplified} states that, 
for $m,n,\ell\ge0$ such that $n+\ell\ge m$, 
\begin{equation}\label{eq:central-lattice-paths}
\sum_{P\in\A^{-\ell}_{n,m}}t^{\des(P)}q^{\maj(P)}=
\sum_{i\ge0} t^i q^{i^2}\left(\qbin{n}{i}\qbin{m}{i}-q^\ell\qbin{n+\ell-1}{i-1}\qbin{m-\ell+1}{i+1}\right).
\end{equation}

Note that this formula generalizes Proposition~\ref{prop:FH}. Another method to prove this formula comes from Krattenthaler and Mohanty's results~\cite{KM} on the enumeration of lattice paths that lie between two lines with respect to the number of peaks and the sum of the positions of the peaks. Equation~\eqref{eq:central-lattice-paths} is obtained when reflecting the paths and removing one of the boundary lines. The proofs in~\cite{KM} are based on inclusion-exclusion on the pairs of sequences obtained by recording the coordinates of the peaks, which is different but plays a similar role to the Greene--Kleitman mapping in our proof.

There is also an analogue to equation~\eqref{eq:central-lattice-paths} where one considers peaks instead of valleys.
For $P\in\G_{n,m}$, let $\hdes(P)$ denote the number of peaks of $P$, and $\hmaj(P)$ the sum of the positions of the peaks.
Some literature on standard Young tableaux relates to the enumeration of paths in $\A^{-\ell}_{n,m}$ with respect to these statistics. In particular, by interpreting $\hdes$ and $\hmaj$ as the number of descents and major index of $2$-row skew standard Young tableaux, Keith \cite[Thm.\ 4]{Keith} proves the following result, which generalizes the case $\ell=0$ that was already proved in \cite[Cor.\ 15]{BBES16} in an equivalent form. As noted in~\cite{Keith}, this result is also a special case of \cite[Thm.\ 1]{KM}.

\begin{theorem}[\cite{Keith,KM}]
$$\sum_{P\in\A^{-\ell}_{n,m}}t^{\hdes(P)}q^{\hmaj(P)}=\sum_{i\ge0} t^iq^{i^2}\left(\qbin{n}{i}\qbin{m}{i}-\qbin{n+\ell+1}{i}\qbin{m-\ell-1}{i}\right).$$
\end{theorem}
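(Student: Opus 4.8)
The plan is to run the nonintersecting-paths argument of Section~\ref{sec:nonintersecting}, but recording the peaks of the path instead of the Durfee data. Fix $i\ge 0$. Every $P\in\G_{n,m}$ with exactly $i$ peaks can be written uniquely as $P=D^{e_0}U^{a_1}D^{b_1}\cdots U^{a_i}D^{b_i}U^{e_1}$ with $e_0,e_1\ge0$ and all $a_j,b_j\ge1$; recording for the $j$th peak (from the left) the number $\alpha_j=a_1+\cdots+a_j$ of $U$-steps up to and including its up-run, together with the number $\delta_j=e_0+b_1+\cdots+b_{j-1}$ of $D$-steps strictly before it, this is equivalent to the data of two sequences $1\le\alpha_1<\cdots<\alpha_i\le n$ and $0\le\delta_1<\cdots<\delta_i\le m-1$. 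The $j$th peak then lies at $x$-coordinate $\alpha_j+\delta_j$, so $\hmaj(P)=\sum_j(\alpha_j+\delta_j)$; the substitutions $\alpha_j\mapsto\alpha_j-j$ and $\delta_j\mapsto\delta_j-(j-1)$ identify the unconstrained sum $\sum q^{\sum_j(\alpha_j+\delta_j)}$ with $q^{i^2}\qbin{n}{i}\qbin{m}{i}$, which already recovers the first term of the formula (and, via Lemma~\ref{lem:qbin2}, reproves that the pair $(\hdes,\hmaj)$ has the same distribution on $\G_{n,m}$ as $(\des,\maj)$).

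Next I would impose $P\in\A^{-\ell}_{n,m}$, assuming as throughout this section that $n+\ell\ge m$. Since the local minima of $P$ occur precisely at the bottoms of its $D$-runs, a direct check shows that $P$ stays weakly above $y=-\ell$ if and only if $\delta_1\le\ell$ and $\delta_{j+1}\le\alpha_j+\ell$ for $1\le j\le i$, where $\delta_{i+1}:=m$; equivalently, writing $\alpha_0:=0$, if and only if $\delta_{j+1}\le\alpha_j+\ell$ for all $0\le j\le i$. This is exactly a noncrossing condition between the strictly increasing chains $(\alpha_j)$ and $(\delta_j)$: drawing the $\alpha$-data as a monotone lattice path from $A_1=(0,0)$ to $B_1=(n-i,i)$ and the $\delta$-data, translated by $\ell$, as a monotone path from $A_2=(n-m+\ell+1,0)$ to $B_2=(n+\ell+1-i,i)$ (here $n+\ell\ge m$ ensures $A_2$ lies to the right of $A_1$), the constraint says the two paths do not meet. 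Weighting a pair of paths by $q$ to the total area above the two paths, exactly as in Section~\ref{sec:nonintersecting}, the Lindstr\"om--Gessel--Viennot lemma gives that the generating function for noncrossing pairs equals that of all pairs minus that of the pairs going from $A_1$ to $B_2$ and from $A_2$ to $B_1$, namely $q^{i^2}\bigl(\qbin{n}{i}\qbin{m}{i}-\qbin{n+\ell+1}{i}\qbin{m-\ell-1}{i}\bigr)$. Multiplying by $t^i=t^{\hdes(P)}$ and summing over $i$ yields the theorem.

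The point requiring care is the bookkeeping of the four corners, and especially checking that the ``swapped'' family is enumerated by $\qbin{n+\ell+1}{i}\qbin{m-\ell-1}{i}$ with no surplus power of $q$; the absence here of the factor $q^{\ell}$ that appears in \eqref{eq:central-lattice-paths} is because translating the $\delta$-chain by $\ell$ shifts the peak positions but not the enclosed area. An alternative avoids the decomposition: reflecting each path of $\A^{-\ell}_{n,m}$ in the $x$-axis turns peaks into valleys and identifies the left-hand side with $\sum t^{\des(Q)}q^{\maj(Q)}$ over grand paths $Q\in\G_{m,n}$ lying weakly below $y=\ell$; subtracting the paths that reach height $\ell+1$ and showing (via a first-passage reflection across $y=\ell+1$, corrected at the junction step) that these are $(\des,\maj)$-equidistributed with $\G_{n+\ell+1,m-\ell-1}$ then gives the formula through Lemma~\ref{lem:qbin2}. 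This junction correction — which is essentially the inclusion--exclusion on peak-coordinate sequences used in~\cite{KM}, and also what underlies Keith's descent enumeration of $2$-row skew tableaux~\cite{Keith} — is the real content, so in practice this step may simply be deferred to~\cite{KM,Keith}.
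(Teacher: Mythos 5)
Your reduction of the problem to the two chains $(\alpha_j)$ and $(\delta_j)$ is correct: the decomposition $P=D^{e_0}U^{a_1}D^{b_1}\cdots U^{a_i}D^{b_i}U^{e_1}$, the identity $\hmaj(P)=\sum_j(\alpha_j+\delta_j)$, the evaluation $q^{i^2}\qbin{n}{i}\qbin{m}{i}$ of the unconstrained sum, and the characterization $\delta_{j+1}\le\alpha_j+\ell$ for $0\le j\le i$ (with $\alpha_0=0$, $\delta_{i+1}=m$) of membership in $\A^{-\ell}_{n,m}$ all check out. The error is in the translation of this inequality into a nonintersection condition: your endpoint assignment puts the $\alpha$-chain on the left (anchored at the origin) and the $\delta$-chain on the right, but the constraint bounds $\delta$ from above in terms of $\alpha$, so it is the $\delta$-path that must stay strictly to the left of the $\alpha$-path. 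With your endpoints the disjointness condition works out to $\alpha_{j+1}\le\delta_j+n-m+\ell+2$, which is not equivalent to $\delta_{j+1}\le\alpha_j+\ell$. Concretely, for $n=m=2$, $\ell=0$, $i=1$, the unique path of $\A^{0}_{2,2}$ with one peak is $UUDD$, i.e.\ $(\alpha_1,\delta_1)=(2,0)$, and in your picture this pair of paths \emph{does} intersect (at $(1,0)$ and $(1,1)$), while the excluded path $DUUD$, i.e.\ $(\alpha_1,\delta_1)=(1,1)$, gives the disjoint pair. The fix is to set $A_1=(0,0)$, $B_1=(m-i,i)$ for the $\delta$-chain (recording $\delta_j-(j-1)$) and $A_2=(\ell+1,0)$, $B_2=(n+\ell+1-i,i)$ for the $\alpha$-chain (recording $\alpha_j-j$); then disjointness is exactly $\delta_{j+1}\le\alpha_j+\ell$ for $0\le j\le i$, the swapped families give $\qbin{n+\ell+1}{i}$ and $\qbin{m-\ell-1}{i}$ with no extra power of $q$ (both pair types carry the same additive offset $i(\ell+1)$ from the translated anchor, so it cancels), and the Lindstr\"om--Gessel--Viennot involution completes the argument. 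With that correction your proof is valid and self-contained, and it is essentially the inclusion--exclusion of~\cite{KM} recast as nonintersecting paths.

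For comparison, the proof in the paper avoids the chain bookkeeping entirely: after recording the unconstrained count via Lemma~\ref{lem:qbin2}, it observes that the Greene--Kleitman map $\gamma$ of Definition~\ref{def:gamma} fixes the set of peaks (both their number and their positions), so $\gamma^{\ell+1}:\B^{-\ell}_{n,m}\to\G_{n+\ell+1,m-\ell-1}$ is a $(\hdes,\hmaj)$-preserving bijection and the subtracted term is immediate. This also answers the worry in your second sketch: the ``junction correction'' you anticipate in a first-passage reflection is exactly what $\gamma^{\ell+1}$ dispenses with, since flipping the $\ell+1$ rightmost unmatched $D$ steps to $U$ steps never creates or destroys a peak. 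Deferring that step to~\cite{KM,Keith}, as you suggest, would make the argument circular for the purposes of this paper, so either the corrected LGV computation or the Greene--Kleitman bijection is needed to close the proof.
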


Next we give an alternative proof of this theorem using the Greene--Kleitman mapping. This is significantly simpler than Keith's proof, and different from Krattenthaler and Mohanty's proof.

\begin{proof}
Reflecting along the $x$-axis, we can use Lemma~\ref{lem:qbin2} to enumerate unconstrained paths with respect to $\hdes$ and $\hmaj$:
\begin{equation}\label{eq:hdeshmajG}
    \sum_{P\in\G_{n,m}}t^{\hdes(P)}q^{\hmaj(P)}=\sum_{P\in\G_{m,n}}t^{\des(P)}q^{\maj(P)}=\sum_{i\ge0} t^i q^{i^2}\qbin{n}{i}\qbin{m}{i}.
\end{equation}
Next we subtract the contribution of paths in $\B^{-\ell}_{n,m}$. It is clear that the bijection $\gamma$ from Definition~\ref{def:gamma} preserves the number and location of the peaks of the path, and hence the statistics $\hdes$ and $\hmaj$. Thus, these statistics are preserved by the bijection $\gamma^{\ell+1}:\B^{-\ell}_{n,m}\to\G_{n+\ell+1,m-\ell-1}$. It follows that 
\begin{equation}\label{eq:hdeshmajB}
    \sum_{P\in\B^{-\ell}_{n,m}}t^{\hdes(P)}q^{\hmaj(P)}=\sum_{P\in\G_{n+\ell+1,m-\ell-1}}t^{\hdes(P)}q^{\hmaj(P)}=\sum_{i\ge0} t^i q^{i^2}\qbin{n+\ell+1}{i}\qbin{m-\ell-1}{i}.
\end{equation}
Subtracting equation~\eqref{eq:hdeshmajB} from equation~\eqref{eq:hdeshmajG}, we obtain the stated formula.
\end{proof}

\section{Other restrictions on ranks}
\label{sec:final}

In this paper we have considered restrictions on ranks by bounding the allowed ranks from below (or equivalently, by conjugation, from above). It is possible to use equation~\eqref{eq:phiS} to study partitions with other restrictions on ranks. Next we consider a few examples where the ranks are constrained to be in a finite set.

\begin{example}
Partitions in $\R^{\{0\}}$ are self-conjugate partitions. Equation~\eqref{eq:phiS} gives a bijection $\R^{\{0\}}_{m,n}\to\V_{m,n}^{\{-1\}}$.
Since paths in $\V_{m,n}^{\{-1\}}$ are determined by the positions of their valleys, they are in bijection with partitions into distinct odd parts, with largest part $\le 2\min(m,n)-1$.
When $m,n\rightarrow\infty$,
the composition of these two bijections  yields a well-known bijection between self-conjugate partitions and partitions into  distinct odd parts.
\end{example}

\begin{example} 
Taking $S=\{0,-1\}$, equation~\eqref{eq:phiS} gives a bijection
$\R^{\{0,-1\}}_{m,n}\to\V_{m,n}^{\{0,-1\}}$.
Again, by recording the positions of their valleys, these paths are in bijection with partitions  into parts that differ by at least $2$, with largest part $\le M$, where
$$M=\begin{cases} 2m-1 &  \text{if }m\le n,\\
2n & \text{otherwise}.\end{cases}$$
The composition of the two bijections preserves the area of the partition, and it sends the side of the Durfee square to the number of parts.

It follows that 
$$ \sum_{\la\in \R^{\{0,-1\}}_{m,n}} t^{d(\la)}q^{|\la|}=\sum_{k=0}^n t^k q^{k^2} \qbin{M-k+1}{k}. $$
Letting $n,m\to\infty$, we get
\begin{equation}
\sum_{\la\in \R^{\{0,-1\}}} t^{d(\la)}q^{|\la|}=\sum_{k\ge 0} \frac{t^k q^{k^2}}{(1-q)(1-q^2)\dots(1-q^k)},
\label{RR1}
\end{equation}
recovering a classical result of Bressoud~\cite{Bressoud}.
\end{example}

\begin{example} 
Taking $S=\{-1,-2\}$, we obtain a bijection $\R^{\{-1,-2\}}_{m,n}\to\V_{m,n}^{\{0,1\}}$.
These paths are in bijection with partitions into parts that differ by at least $2$, with smallest part $\ge 2$ and largest part at most $M$, where
$$M=\begin{cases} 2m-2 & \text{if }m\le n+1,\\
2n+1 & \text{otherwise}.\end{cases}$$ 
It follows that, for $m,n\ge2$,
$$\sum_{\la\in \R^{\{-1,-2\}}_{m,n}} t^{d(\la)}q^{|\la|}=\sum_{k=0}^{n-1} t^k q^{k(k+1)} \qbin{M-k}{k}.$$
\end{example}

\begin{example} 
Taking $S=\{0,-2\}$, we obtain a bijection $\R^{\{0,-2\}}_n\to\V_n^{\{-1,1\}}$. In this case, such paths are not uniquely determined by the positions of their valleys. Instead, a valley at a given position can be at either of the two available heights, with the exception that valleys whose positions differ by $2$ are forced to be a the same height, and valleys in positions $1$ and $2n-1$ must be at height $-1$.
Defining a {\em 2-block} to be a maximal arithmetic progression of difference two, paths in $\V_n^{\{1,-1\}}$ are in bijection with partitions into odd distinct parts, with largest part $\le 2n-1$, where each 2-block of parts can be colored in one of two colors, except if the block contains part $1$ or part $2n-1$. Partitions that have $k$ parts, after subtracting $2(k-i)+1$ from the $i$th largest part, correspond to partitions into at most $k$ even parts, with largest part $\le 2(n-k)$, and where each block of equal parts (other that $2(n-k)$) gets one of two colors.
Letting $n\to\infty$, it follows that 
$$\sum_{\la\in \R^{\{0,-2\}}} t^{d(\la)}q^{|\la|}=\sum_{k\ge0} \frac{t^k q^{k^2}(1+q^2)(1+q^{4})\dots(1+q^{2k})}{(1-q^2)(1-q^4)\dots(1-q^{2k})}.$$

More generally, for any positive integer $a$, we get
$$\sum_{\la\in \R^{\{0,-a\}}} t^{d(\la)}q^{|\la|}=\sum_{k\ge0} \frac{t^k q^{k^2}(1+q^a)(1+q^{2a})\dots(1+q^{ak})}{(1-q^2)(1-q^4)\dots(1-q^{2k})}.$$
For $a=1$, we recover equation~\eqref{RR1}. 
\end{example}

For arbitrary sets $S$, we do not have a general method to derive a formula for the generating function in equation~\eqref{eq:RV}, even in the case $m,n\to\infty$.
At the level of paths, the difficulty arises when trying to keep track of the statistic $\maj$ (which corresponds to the area of the partition). However, if we disregard this parameter and only keep track of $\des$ (which corresponds to the side of the Durfee square), then it is possible to use continued fractions to enumerate paths whose valleys can occur only at certain arbitrary heights.
More generally, one can count paths with respect to the number of valleys whose heights lie inside or outside a certain set, and deduce generating functions of the form
$$\sum_{n\ge0}\sum_{\la\in\P_n}t^{|\{i:r_i(\la)\in S\}|}u^{|\{i:r_i(\la)\notin S\}|}
z^n$$
for arbitrary $S$.

For example, letting $S$ be the set of odd integers, we can enumerate partitions with respect to the number of odd and even ranks:
\begin{multline*}\sum_{n\ge0}\sum_{\la\in\P_n}t^{|\{i:r_i(\la)\text{ odd}\}|}u^{|\{i:r_i(\la)\text{ even}\}|}
z^n
=\sum_{n\ge0}\sum_{P\in\G_n}t^{|\{i:v_i(P)\text{ even}\}|}u^{|\{i:v_i(P)\text{ odd}\}|}z^n\\
=\frac{1-(t-1)z}{\sqrt{(1-(t-1)z)(1-(u-1)z)(1+(u-1)(t-1)z^2-(t+u+2)z)}}.
\end{multline*}

\subsection*{Acknowledments} SC is partially funded by NSF
grant DMS-2054482 and by grant ANR COMBINE ANR-19-CE48-0011. SE was partially supported by Simons Collaboration Grant \#929653.

\end{document}